\newtheorem{theorem}{Theorem}
\newtheorem{corollary}[theorem]{Corollary}
\newtheorem{lemma}[theorem]{Lemma}
\newtheorem{proposition}[theorem]{Proposition}
\numberwithin{equation}{section}
\numberwithin{theorem}{subsection}
\title{Oriented Hypergraphs I: Introduction and Balance}
\author{Lucas J. Rusnak\thanks{A special thanks to Thomas Zaslavsky and Gerard Cornu\'{e}jols for their feedback.}\\
\small Department of Mathematics\\[-0.8ex]
\small Texas State University\\[-0.8ex] 
\small San Marcos, Texas, U.S.A.\\
\small\tt lucas.rusnak@txstate.edu\\
}
\date{\dateline{Sept 30, 2012}{MMM DD, YYYY}\\
\small Mathematics Subject Classifications: 05C22, 05C65, 05C75}
\begin{document}

\maketitle

\begin{abstract}
 An oriented hypergraph is an oriented incidence structure that extends the
concept of a signed graph. We introduce hypergraphic structures
and techniques central to the extension of the circuit classification of signed graphs 
to oriented hypergraphs. Oriented hypergraphs are further decomposed into three 
families -- balanced, balanceable, and unbalanceable -- and we obtain a complete 
classification of the balanced circuits of oriented hypergraphs.

%%%keywords%%%
  \bigskip\noindent \textbf{Keywords:} Oriented hypergraph; balanced hypergraph; balanced matrix; signed hypergraph
\end{abstract}

%%%%%%%%%%%%%
%%%% Section 1 %%%%
%%%%%%%%%%%%%

\section{Introduction}
\label{Section1}

Oriented hypergraphs have recently appeared in \cite{AH1} as an extension of the signed graphic incidence, adjacency, and Laplacian matrices to examine walk counting. This paper further expands the theory of oriented hypergraphs by examining the extension of the cycle space of a graph to oriented hypergraphs, and we obtain a classification of the balanced minimally dependent columns of the incidence matrix of an oriented hypergraph.

It is known that the cycle space of a graph characterizes the dependencies of the 
graphic matroid and the minimal dependencies, or circuits, are the edge sets 
of the simple cycles of the graph. Oriented hypergraphs have a natural division into three categories: balanced, balanceable, and unbalanceable. The family of balanced oriented hypergraphs contain graphs, so a characterization of the balanced circuits of oriented hypergraphs can be regarded as an extension of the following theorem: 

\begin{theorem}
\label{G1} $C$ is the edge set of a circuit of a graph $G$ if, and only
if, $C$ is a circuit of the graphic matroid $M(G)$.
\end{theorem}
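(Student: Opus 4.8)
The plan is to argue directly from the definition of the graphic matroid $M(G)$, whose independent sets are the edge sets of forests of $G$ (equivalently, the sets of columns of the incidence matrix of an orientation of $G$ that are linearly independent over $\mathbb{R}$, or of the unoriented incidence matrix over $GF(2)$), and whose circuits are by definition the minimal dependent sets. With this in hand the theorem splits into two containments.

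First, for the forward implication, suppose $C$ is the edge set of a circuit (simple cycle) of $G$. The subgraph $(V(G),C)$ is then a cycle, hence not a forest, so $C$ is dependent in $M(G)$. Deleting any single edge $e$ from $C$ leaves a path, which is a forest; therefore $C\setminus\{e\}$ is independent, and a fortiori every proper subset of $C$ is independent. Thus $C$ is a minimal dependent set, i.e., a circuit of $M(G)$. For the converse, let $C$ be a circuit of $M(G)$. Being dependent, the spanning subgraph $(V(G),C)$ is not a forest, so it contains a simple cycle; let $C'\subseteq C$ be its edge set. By the forward implication (or directly, since a cycle is dependent), $C'$ is a dependent subset of $C$, and minimality of $C$ forces $C'=C$. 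Hence $C$ is the edge set of a simple cycle of $G$.

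The only genuine content is the identification of the independent sets of $M(G)$ with the edge sets of forests. If this is taken as the definition of $M(G)$, the argument above is already complete. If instead $M(G)$ is the column matroid of the incidence matrix of $G$ over a field, the main step is to check that a set of incidence columns is linearly independent exactly when the corresponding edge set is acyclic: one direction holds because the signed sum of the incidence vectors around any cycle vanishes, and the other follows by induction on the number of edges, repeatedly deleting a leaf of a forest to put the relevant submatrix in triangular form. I expect this rank computation — routine though it is — to be the only real obstacle here, and it is worth flagging because the hypergraphic generalization pursued in the rest of the paper replaces exactly this step with the more delicate task of deciding which sub-incidence-structures are balanced and minimally dependent, which is where the new machinery will be needed.
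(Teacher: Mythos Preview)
Your argument is correct and is the standard textbook proof of this classical fact. Note, however, that the paper does not supply its own proof of this theorem: it is stated in the introduction as a well-known result, serving as the graphic baseline that the rest of the paper generalizes to balanced oriented hypergraphs. So there is no ``paper's proof'' to compare against here; your write-up would simply fill in the background that the paper takes for granted. Your closing remark about the rank computation being the step that gets replaced by the balanced-flower machinery is apt and matches the paper's narrative.
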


The development of hypergraphic incidence orientation is a direct extension of the work by
 Zaslavsky in \cite{SG, BG1, OSG}, while the concept of balance is a relaxed version of the concepts which appear in \cite{Berge1, Berge2, BM, TrAlpha}.

Section \ref{Section2} 
introduces basic oriented hypergraphic definitions and the incidence matrix. 
Section \ref{Section3} collects the operations relevant to the classification of 
oriented hypergraphic circuits.  Section \ref{Section4} discusses hypergraphic 
extensions of paths and cycles.  Section \ref{Section5} introduces the 
hypergraphic cyclomatic number and the incidence graph. 
These lead to the development of the concept of balance for oriented 
hypergraphs in Section \ref{Section6}, and a complete classification of the 
balanced oriented hypergraphic circuits in Section \ref{Section7}.

\subsection{Signed Graphs}

A signed graph is a generalization of a graph that allows edges incident to $2$ or 
fewer vertices and signs every $2$-edge $+$ or $-$. The edges
incident to zero vertices are called \emph{loose edges}, while the edges
incident to exactly $1$ vertex are called \emph{half edges}. A \emph{circle} 
of signed graph is the edge set of a simple cycle, a half edge, or a loose
edge. A circle is \emph{positive} or \emph{negative} according to the
product of the signs of its edges. A loose edge is regarded as positive,
while a half edge is regarded as negative. A \emph{handcuff} is a pair of
disjoint circles connected by a path of length $\geq 1$, or two circles who
only share a single vertex. If both circles of a handcuff are positive we
say the handcuff \emph{balanced}, and if both circles are negative we say
the handcuff is \emph{contra-balanced}.

While Zaslavsky introduces two natural matroids associated to a signed
graph, our focus is on the frame matroid, which most faithfully extends the
concepts of graph theory via the signed incidence matrix. The circuits of
the signed graphic frame matroid are classified by the following theorem of
Zaslavsky in \cite{SG}.

\begin{theorem}
\label{S1} $C$ is a circuit of the signed graphic frame matroid $M(\Sigma )$
if, and only if, $C$ is the edge set of a positive circle or a
contra-balanced handcuff in the signed graph $\Sigma $.
\end{theorem}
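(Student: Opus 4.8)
The plan is to work with the concrete representation of the frame matroid $M(\Sigma)$ as the column matroid of the signed incidence matrix $H$ over a field of characteristic $\neq 2$ (I will take $\mathbb{R}$): a positive edge $uv$ contributes the column $\chi_u-\chi_v$, a negative edge $uv$ the column $\chi_u+\chi_v$, a half edge at $v$ the column $\chi_v$, a loose edge the zero column, and a loop the corresponding degenerate case. Two preliminaries carry the argument. First, switching $\Sigma$ at a vertex multiplies one row of $H$ by $-1$ and rescales the columns of the incident edges by $\pm1$; this is a diagonal change of basis, so it leaves the matroid unchanged and we may switch freely. Second is the rank formula: for a connected signed graph on vertex set $W$ the incidence columns have rank $|W|-1$ when it is balanced and $|W|$ when it is unbalanced. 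The balanced case follows by switching to an all‑positive graph and quoting the classical graphic rank; the unbalanced case follows by locating a spanning tree together with one extra edge forming a negative circle, whose columns telescope (in the normal form with a single negative edge) to $2\chi_v\neq0$ for a vertex $v$ of that circle, raising the rank by one. Consequently $r(S)=|V|-b(S)$, where $b(S)$ is the number of balanced components of $(V,S)$.

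Next I would check that the listed configurations are circuits. Dependence: switching a positive circle to an all‑positive cycle, its columns sum to $0$ with $\pm1$ coefficients; for a contra‑balanced handcuff $C_1\cup P\cup C_2$, switch each $C_i$ to the one‑negative‑edge normal form and $P$ to all positive, so that the sum of the columns of $C_i$ equals $2\chi_{u_i}$ at the attachment vertex $u_i$ while the columns of $P$ telescope to $\chi_{u_1}-\chi_{u_2}$; then $(\text{sum over }C_1)-2(\text{sum over }P)-(\text{sum over }C_2)=0$ with every edge appearing nonzero, the tight‑handcuff case $u_1=u_2$ reducing to $(\text{sum over }C_1)-(\text{sum over }C_2)=0$. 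Minimality: deleting any edge of a positive circle leaves a forest; deleting an edge of a contra‑balanced handcuff leaves either one connected unbalanced unicyclic graph or two vertex‑disjoint ones, and by the rank formula a connected unbalanced unicyclic graph on $n$ vertices has $n$ edges and rank $n$, hence independent columns. (Half edges and loops are absorbed by the same computations, treating a half edge or negative loop at $v$ as a length‑one negative circle.)

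Finally I would show nothing else occurs. A circuit $S$ induces a connected subgraph on its support $W$: otherwise some component already has more edges than its rank, giving a proper dependent subset. Applying the rank formula to $(W,S)$, dependence forces either (a) $(W,S)$ balanced with $|S|\ge|W|$, hence $(W,S)$ contains a cycle, which is a positive circle; or (b) $(W,S)$ unbalanced with $|S|\ge|W|+1$, i.e. cyclomatic number at least $2$. In case (b), if $(W,S)$ has a positive circle we are done; otherwise every circle is negative, and here is the crux: in any theta subgraph (two vertices joined by internally disjoint paths $P_1,P_2,P_3$) the three circle signs are $\sigma(P_i)\sigma(P_j)$, whose product $\sigma(P_1)^2\sigma(P_2)^2\sigma(P_3)^2=+1$ cannot arise from three $-$ signs, so a signed graph with no positive circle contains no theta subgraph; hence each of its blocks is a single edge or a single (necessarily negative) circle, and cyclomatic number at least $2$ yields two negative‑circle blocks, joined through the block–cut tree either at a cut vertex (a tight handcuff) or by a path of length $\ge1$ — in either case a contra‑balanced handcuff. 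In every case $S$ contains one of the listed configurations $D$; since $D$ is itself dependent and $S$ is a minimal dependent set, $S=D$, and the converse was already established. I expect the last step — extracting an honest contra‑balanced handcuff from an arbitrary unbalanced dependent set — to be the main obstacle, and the theta‑sign count combined with the block decomposition is what I would rely on to clear it.
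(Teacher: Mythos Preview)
The paper does not prove this theorem; it is quoted as a result of Zaslavsky \cite{SG} and used purely as background for the signed-graph setting, so there is no proof in the paper to compare against.

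Your argument is correct and follows the standard route. The switching invariance of the column matroid, the rank formula $r(S)=|V|-b(S)$, the explicit linear dependencies obtained by switching a positive circle to all-positive form and each negative circle of a handcuff to one-negative-edge form, and the minimality check via the rank of connected unbalanced unicyclic pieces are all sound. For the converse, your key observation that the three circle signs in a vertex-theta multiply to $+1$ (so a theta always carries a positive circle) is exactly the content recorded in the paper as Lemma~\ref{L48}; combining it with the elementary fact that any $2$-connected graph that is not a single cycle contains a theta forces every block of a signed graph with no positive circle to be a bridge or a single negative cycle. Two such cycle blocks, together with any simple path in the graph joining their respective cut vertices, already form a contra-balanced handcuff contained in $S$, and minimality of $S$ then yields equality. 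The proof is complete as written.
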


We can regard a graph as a signed graph in which each edge is positive, so every circle of a graph is necessarily positive, thus theorem \ref{S1} subsumes the graphic circuit classification. Orientations of signed graphs (see \cite{OSG}) motivates the development of incidence-oriented hypergraphs. While our focus is on hypergraphic extensions of balanced signed graphs we also refine the concept of an unbalanced signed graph into balanceable and unbalanceable oriented hypergraphs.

The concept of an incidence-oriented hypergraph extending a signed graph for 
VLSI design and logic synthesis was introduced in 1992 by Shi in \cite{Shi1},
and further developed by Shi and Brzozowski in \cite{ShiBrz}. 
Incidence-oriented hypergraphs and balance were independently developed by
Rusnak in \cite{OHD} as a combinatorial model to extend algebraic and spectral graph theoretic results to integral matrices as well as examine the circuit structure of representable 
matroids --- this paper is an adaptation of the introduction and classification of those balanced minimal dependency results.

%%%%%%%%%%%%%
%% %% Section 2 %%%%
%%%%%%%%%%%%%

\section{An Introduction to Oriented Hypergraphs}
\label{Section2}

\subsection{Introductory Definitions}

Let $V$ and $E$ be disjoint finite sets whose respective elements are called 
\emph{vertices} and \emph{edges}. An \emph{incidence function} is a
function $\iota :V\times E\rightarrow \mathbb{Z}_{\geq 0}$, while a vertex $v$ 
and an edge $e$ are said to be \emph{incident with respect to }$\iota $
if $\iota (v,e)\neq 0$. An \emph{incidence} is a triple $(v,e,k)$ where $v$
and $e$ are incident and $k\in \{1$, $2$, $3$, . . . , $\iota (v,e)\}$. The value 
$\iota (v,e)$ is called the \emph{multiplicity} of the incidence.

Let $\mathcal{I}_{\iota }$ be the set of incidences determined by $\iota $. 
Since the set $\mathcal{I}_{\iota }$ also determines the incidence
function we immediately drop the subscript notation and simply write $\mathcal{I}$. 
An \emph{incidence orientation} is a function $\sigma :\mathcal{I}\rightarrow \{+1,-1\}$. 
Every incidence $(v,e,k)$ is naturally extended to a quadruple $(v,e,k,\sigma (v,e,k))$ 
called an \emph{oriented incidence}. An \emph{oriented hypergraph} is the quadruple 
$(V,E,\mathcal{I},\sigma )$. This formulation of oriented incidence is an extension of orientations of signed graphs in \cite{OSG}.

When drawing oriented hypergraphs the vertices are depicted as points in
the plane while edges will be depicted as shaded regions in the plane whose
incident vertices appear on its boundary. \ An oriented incidence $%
(v,e,k,\sigma (v,e,k))$ is drawn within edge $e$ as an arrow entering $v$ if 
$\sigma (v,e,k)=+1$, or an arrow exiting $v$ if $\sigma (v,e,k)=-1$.

\begin{figure}[h!]
\centering
\includegraphics{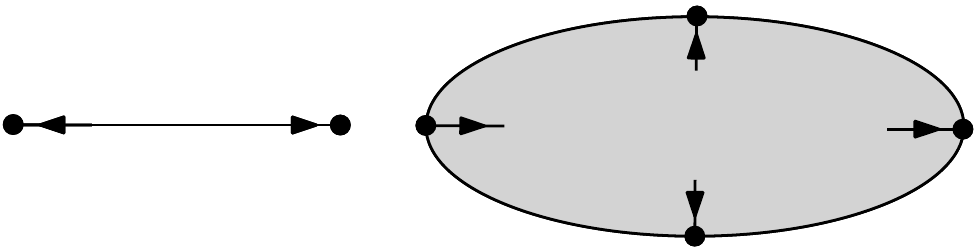}
\caption{Two oriented hyperedges.}
\label{fig:OHEdges}
\end{figure}

A triple $(V,E,\mathcal{I})$ is a \emph{hypergraph}, and all
definitions that do not depend on $\sigma $ will be defined on the
underlying hypergraph of an oriented hypergraph and will be inherited by the
oriented hypergraph.

A hypergraph is \emph{simple} if $\iota (v,e)\leq 1$ for all $v$ and $e$,
and for convenience we will write $(v,e)$ instead of $(v,e,1)$ if $G$ is a
simple hypergraph. Two, not necessarily distinct, vertices $v$ and $w$ are
 \emph{adjacent with respect to edge }$e$ if there exists
incidences $(v,e,k_{1})$ and $(w,e,k_{2})$ such that $(v,e,k_{1})\neq
(w,e,k_{2})$. An \emph{adjacency} is a quintuple $(v,k_{1};w,k_{2};e)$ where 
$v$ and $w$ are adjacent with respect to edge $e$ using incidences $(v,e,k_{1})$ 
and $(w,e,k_{2})$.

The \emph{degree}, or \emph{valency}, of a vertex is equal
to the number of incidences containing that vertex and is denoted $\deg (v)$. 
A vertex whose degree equals $0$ is \emph{isolated} and a vertex whose degree 
equals $1$ is \emph{monovalent}. The \emph{size} of an edge is the number
of incidences containing that edge, and an edge of size $k$ is called a $k$\emph{-edge}.

A \emph{path} is a set of vertices, edges, and incidences of a hypergraph
that form a sequence $
a_{0},i_{1},a_{1},i_{2},a_{2},i_{3},a_{3},...,a_{m-1},i_{m},a_{m}$, where $
\{a_{j}\}$ is an alternating sequence of vertices and edges, $i_{j}$ is an
incidence containing $a_{j-1}$ and $a_{j}$, and\ no vertex, edge, or
incidence is repeated. The first and last elements of this sequence are 
the \emph{end-points} of the path. A path where both end-points are
vertices is a \emph{vertex-path}, a path where both end-points are edges
is an \emph{edge-path}, and a path where one end-point is a vertex and
the other is an edge is a \emph{cross-path} as it \textquotedblleft
crosses\textquotedblright\ the incidence structure from a vertex to an edge.

A hypergraph is \emph{connected} if for any two distinct elements
of $V\cup E$ there exists a path in $G$ containing them. A hypergraph that
is not connected is \emph{disconnected}. A \emph{connected
component} is a maximal connected subhypergraph. An edge whose removal 
increases the number of connected components is an \emph{isthmus}, 
a vertex whose removal increases the number of connected components is 
a \emph{cut vertex}, and an incidence whose removal increases the 
number of connected components is a \emph{shoal}.

A \emph{circle of length }$k$ is a set of $k$ vertices, $k$ edges, and $2k$
incidences that form a sequence $%
a_{0},i_{1},a_{1},i_{2},a_{2},i_{3},a_{3},...,a_{2k-1},i_{2k},a_{2k}$, where 
$\{a_{j}\}$ is an alternating sequence of vertices and edges, $i_{j}$ is an
incidence containing $a_{j-1}$ and $a_{j}$, and\ no vertex, edge, or
incidence is repeated except $a_{0}=a_{2k}$. \ By symmetry we may assume
that $a_{0}$ is a vertex. \ A circle $C$ is \emph{degenerate} if, for some
edge $a_{i}\in C$, there is a vertex $v\in C$ such that $v$ is not $a_{i-1}$
or $a_{i+1}$, and $v$ is incident to $a_{i}$. \ A circle that is not
degenerate is called \emph{pure}.

Given a hypergraph $G$ and a monovalent vertex $v$ of $G$ we say $v$ is a 
\emph{leaf of }$G$ if the edge incident to $v$ is not contained in a circle
of $G$, however, we say $v$ is a \emph{thorn of }$G$ if the edge incident to 
$v$ is contained in some circle of $G$. An edge containing a leaf is 
a \emph{twig} while an edge containing a thorn is a \emph{briar}.

Given an adjacency $(v,k_{1};w,k_{2};e)$ we define the 
\emph{sign of the adjacency} as 
\begin{equation*}
sgn_{e}(v,k_{1};w,k_{2})=-\sigma (v,e,k_{1})\sigma (w,e,k_{2})\text{.}
\end{equation*}%
This is shortened to $sgn_{e}(v,w)=-\sigma (v,e)\sigma (w,e)$ if $G$ is simple. 
If $v$ and $w$ are not adjacent via edge $e$ we say the
sign of the non-adjacency is $0$. It should be noted that signed $2$-edges as discussed in \cite{SG} correspond to an edge with a single signed adjacency for oriented hypergraphs.

If $B=\{a_{0},i_{1},a_{1},i_{2},a_{2},i_{3},a_{3},...,a_{n-1},i_{n},a_{n}\}$ is a
circle or a path, then the \emph{sign of }$B$ is%
\begin{equation*}
sgn(B)=(-1)^{p}\prod_{h=1}^{n}\sigma (i_{h})\text{,}
\end{equation*}%
where%
\begin{equation*}
p=\left\lfloor \tfrac{n}{2}\right\rfloor .
\end{equation*}%
This implies that the sign of a circle is the product of the signs of all adjacencies in the circle.

Given a hypergraph $G=(V_{G},E_{G},\mathcal{I}_{G})$ a \emph{subhypergraph} $%
H$ of $G$ is the hypergraph $H=(V_{H},E_{H},\mathcal{I}_{H})$ where $%
V_{H}\subseteq V_{G}$, $E_{H}\subseteq E_{G}$, and $\mathcal{I}_{H}\subseteq 
\mathcal{I}_{G}\cap \mathcal{(}V_{H}\times E_{H}\times \mathbb{Z} )$. \ This
definition is more relaxed than conventional definitions as it allows for
only parts of edges to appear in the subhypergraph, giving the flexibility
to have incidence-centric treatments of subhypergraphs in addition to the
usual edge-centric and vertex-centric subhypergraphs.

We are often interested in subhypergraphs with more structure then a general
subhypergraph. \ Let $G=(V,E,\mathcal{I})$ be a hypergraph, and let $%
U\subseteq V$ and $F\subseteq E$. \ The \emph{cross-induced subhypergraph of 
}$G$\emph{\ on }$(U,F)$ is the subhypergraph $G$:$(U,F)=(U,F,\mathcal{I}\cap
(U\times F\times \mathbb{Z}))$. \ If $U=V$ we say that the subhypergraph is
an \emph{edge-restriction to }$F$ and write $G|F$. \ An \emph{edge-induced
hypergraph}\textit{\ }is the hypergraph $G$:$F=(W,F,\mathcal{I}\cap (W\times
F\times \mathbb{Z}))$ where $W=\{v\in V:$ $v$ is incident to some $f\in F\}$%
. \ All hypergraphic containment will take place in the edge-induced
ordering unless otherwise stated.

\subsection{The Incidence Matrix}

Given a labeling $v_{1}$, $v_{2}$, $v_{3}$, . . . , $v_{m}$ of the elements
of $V$, and $e_{1}$, $e_{2}$, $e_{3}$, . . . , $e_{n}$ of the elements of $E$%
, of an oriented hypergraph $G$, the \emph{incidence matrix of }$G$ is the $%
m\times n$ matrix $\mathrm{H}_{G}=[\eta _{ij}]$, where 
\begin{equation*}
\eta _{ij}=\sum\limits_{k=1}^{\iota (v_{i},e_{j})}\sigma (v_{i},e_{j},k)%
\text{.}
\end{equation*}

If $G$ is simple, then this is equivalent to%
\begin{equation*}
\eta _{ij}=\left\{ 
\begin{tabular}{ll}
$0$, & if $(v_{i},e_{j})\notin \mathcal{I}\text{,}$ \\ 
$1$, & if $\sigma (v_{i},e_{j})=+1\text{,}$ \\ 
$-1$, & if $\sigma (v_{i},e_{j})=-1\text{.}$%
\end{tabular}%
\right.
\end{equation*}

Every simple oriented hypergraph with a labeled vertex set and\ labeled edge
set has a representation as a $\{0,\pm 1\}$-matrix using its incidence
matrix. Moreover, a $\{0,\pm 1\}$-matrix with labeled columns and rows 
has a unique representation as a simple oriented hypergraph
with edge set equal to the column labels, vertex set equal to the row
labels, and a vertex $v$ and an edge $e$ are incident if the $(v,e)$-entry
in the matrix is non-zero.

Non-simple oriented hypergraphs may have incidence matrix entries other than 
$0$, $+1$, or $-1$, for example, if there are three incidences containing
the same vertex and edge each oriented $+1$, then a value of $+3$ would appear in
the incidence matrix. It is also possible that two incidences at the same
vertex within the same edge could be signed $+1$ and $-1$ and produce a net
value of $0$ in the incidence matrix. To avoid such redundancies all
multiple incidences are regarded as having the same orientation unless
stated otherwise.

An oriented hypergraph is said to be \emph{dependent} if the columns of its
incidence matrix are dependent, and adopt similar conventions for all
matrix related terminology. The classification of the minimal column
dependencies of a $\{0,\pm 1\}$-matrix $\mathrm{H}$ begins with the 
following simple lemma.

\begin{lemma}
\label{L5} If an oriented hypergraph contains a monovalent vertex, then it
is not minimally dependent.
\end{lemma}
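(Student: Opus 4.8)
The plan is to argue directly from the definition of minimal dependence in terms of the incidence matrix $\mathrm{H}_G$. Suppose $G$ is an oriented hypergraph with a monovalent vertex $v$, say the unique incidence at $v$ is $(v,e,1)$ lying in edge $e$; after relabeling we may take $v=v_1$ and $e=e_1$. Then the first row of $\mathrm{H}_G$ has the single nonzero entry $\eta_{11}=\sigma(v_1,e_1,1)=\pm1$ occurring in the column of $e_1$, and all other entries $\eta_{1j}$ with $j\neq 1$ vanish, since $v_1$ is incident to no edge other than $e_1$.

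First I would suppose, for contradiction, that $G$ is minimally dependent, i.e.\ the columns of $\mathrm{H}_G$ form a minimal linearly dependent set. Let $\sum_j c_j \mathbf{h}_j = \mathbf{0}$ be a dependence with all $c_j\neq 0$ (minimality forces every coefficient to be nonzero, since dropping a zero-coefficient column would leave a smaller dependent subset). Reading off the first coordinate of this vector equation gives $\sum_j c_j \eta_{1j} = c_1\eta_{11} = 0$, and since $\eta_{11}=\pm1$ this forces $c_1=0$, contradicting $c_1\neq 0$. Hence no such minimal dependence exists, so $G$ is not minimally dependent. (If one prefers the contrapositive framing: the column $\mathbf{h}_1$ of $e_1$ cannot participate in any dependence at all, because it is the only column with a nonzero entry in row $1$; so any dependence among the columns is already a dependence among the columns $\{\mathbf{h}_j : j\neq 1\}$, which is therefore not minimal unless it is empty — but the empty set is independent.)

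I should also dispose of the trivial edge cases so the statement is literally correct: if $G$ has no columns at all, or more generally if its columns are already independent, then $G$ is not dependent and hence a fortiori not minimally dependent, so the conclusion holds vacuously. The only substantive case is the one handled above, where $v$ contributes a row that isolates the column of its unique incident edge.

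I do not anticipate a serious obstacle here — the lemma is essentially the observation that a matrix row with a unique nonzero entry kills the corresponding column from every dependence. The only point requiring a little care is the bookkeeping when the incidence has multiplicity $\iota(v,e)>1$: then $\eta_{11}=\sum_{k=1}^{\iota(v_1,e_1)}\sigma(v_1,e_1,1,k)$, but by the paper's convention that multiple incidences at the same vertex within the same edge share an orientation, this sum equals $\pm\,\iota(v_1,e_1)\neq 0$, so the argument goes through unchanged. (One must be slightly careful that "monovalent" is about degree, i.e.\ the number of incidences, so strictly a monovalent vertex has $\deg(v)=1$ and hence exactly one incidence of multiplicity one; in any case $\eta_{11}\neq 0$ is all that is used.)
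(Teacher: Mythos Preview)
Your proof is correct and follows exactly the same idea as the paper's: a monovalent vertex yields a row of $\mathrm{H}_G$ with a single nonzero entry, so the corresponding column cannot appear in any minimal dependence. Your write-up simply expands the details (the explicit contradiction, the edge cases, the multiplicity remark) that the paper leaves implicit in its one-sentence proof.
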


\begin{proof}
If an oriented hypergraph contains a monovalent vertex, then there is a row
with a single non-zero entry and the corresponding column cannot belong to a
minimal dependency.
\end{proof}

\subsection{Incidence Duality}

Given a hypergraph $G$ the \emph{incidence dual} $G^{\ast }$ is the
hypergraph obtained by reversing the roles of the vertices and edges. That
is, given an oriented hypergraph $G=(V,E,\mathcal{I},\sigma )$, its
incidence dual\textit{\ }$G^{\ast }$ is the oriented hypergraph $(E,V,%
\mathcal{I}^{\ast },\sigma ^{\ast })$ where $\mathcal{I}^{\ast
}=\{(e,v,k):(v,e,k)\in \mathcal{I}\}$, and $\sigma ^{\ast }:\mathcal{I}%
^{\ast }\rightarrow \{+1,-1\}$ such that $\sigma ^{\ast }(e,v,k)=\sigma
(v,e,k)$. Observe that $\mathcal{I}^{\ast }$ determines an incidence
function $\iota ^{\ast }$ where $\iota ^{\ast }(e,v)=\iota (v,e)$. In
graph theory a line graph can be regarded as the graphical approximation of
incidence duality. A number of algebraic graph theoretic results hold in the more general setting of oriented hypergraphs and incidence duality, see \cite{AH1}.

A number of structures are closed under incidence duality. By interchanging 
the roles of edges and vertices the incidence dual of a path is still a path. 
Specifically, the incidence dual of a vertex-path is an edge-path, the incidence 
dual of an edge-path is a vertex-path, and the incidence dual of a cross-path 
is a cross-path. Similarly, the incidence dual of a circle is still a circle. 
However, we have a better result for circles:

\begin{lemma}
\label{L7} The following are true for a circle $C$ in an oriented hypergraph 
$G$.

\begin{enumerate}
\item $C$ is pure in $G$ if, and only if, $C^{\ast }$ is pure in $G^{\ast }$.

\item The sign of $C$ in $G$ is equal to the sign of $C^{\ast }$ in $G^{\ast
}$.
\end{enumerate}
\end{lemma}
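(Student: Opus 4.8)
The plan is to treat the two statements separately, in each case first writing the dual circle down explicitly. If $C$ has defining sequence $a_{0},i_{1},a_{1},\dots,i_{2k},a_{2k}$ with $a_{0}=a_{2k}$, then $C^{*}$ is the sequence $a_{0}^{*},i_{1}^{*},a_{1}^{*},\dots,i_{2k}^{*},a_{2k}^{*}$ in which $i_{h}^{*}=(e,v,k')\in\mathcal{I}^{*}$ is the dual of the incidence $i_{h}=(v,e,k')$; as already observed this is again a circle, of the same length $k$, in $G^{*}$, and its vertex set is the edge set of $C$ while its edge set is the vertex set of $C$. That the sequence for $C^{*}$ begins at an edge of $C$ rather than a vertex is harmless, since neither purity nor the sign of a circle depends on the chosen basepoint.

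For part (1), which I expect to be the only part requiring care, I would unwind the definition of degeneracy. The circle $C$ is degenerate exactly when there are an edge $a_{i}$ of $C$ and a vertex $v$ of $C$ with $v\notin\{a_{i-1},a_{i+1}\}$ (indices modulo $2k$) and $\iota(v,a_{i})\neq 0$. Passing to $G^{*}$, the edge $a_{i}$ becomes a vertex $a_{i}^{*}$ of $C^{*}$ and the vertex $v$ becomes an edge $v^{*}$ of $C^{*}$; the cyclic-neighbour condition is untouched because duality preserves the order of the sequence, and $\iota^{*}(a_{i},v)=\iota(v,a_{i})$ shows the incidence relation is preserved. Hence $C$ is degenerate if and only if some vertex and some edge of $C^{*}$, not cyclically adjacent in $C^{*}$, are incident in $G^{*}$, which is precisely the statement that $C^{*}$ is degenerate, the witness $(a_{i},v)$ for $C$ corresponding to the witness $(v^{*},a_{i}^{*})$ for $C^{*}$. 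The only subtlety is that the definition of degeneracy is phrased asymmetrically (``for some edge, there is a vertex''), so one must confirm that interchanging the vertex and edge roles really does carry a $C$-witness to a $C^{*}$-witness; this is exactly the correspondence just exhibited. Negating, $C$ is pure in $G$ if and only if $C^{*}$ is pure in $G^{*}$.

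For part (2) I would just evaluate the sign formula on both circles. Since $C^{*}$ is a circle of length $k$ with incidence sequence $i_{1}^{*},\dots,i_{2k}^{*}$, the formula yields $sgn(C)=(-1)^{k}\prod_{h=1}^{2k}\sigma(i_{h})$ and $sgn(C^{*})=(-1)^{k}\prod_{h=1}^{2k}\sigma^{*}(i_{h}^{*})$; the leading signs agree because the lengths agree, and $\sigma^{*}(i_{h}^{*})=\sigma^{*}(e,v,k')=\sigma(v,e,k')=\sigma(i_{h})$ for every $h$, so the products agree term by term and $sgn(C)=sgn(C^{*})$. (Equivalently, since the sign of a circle is the product of the signs of its adjacencies, one can match the $k$ adjacencies of $C^{*}$, which lie at the edges of $C^{*}$ — the vertices of $C$ — with incidence pairs of $C$, each contributing $-\sigma^{*}(i_{a}^{*})\sigma^{*}(i_{b}^{*})=-\sigma(i_{a})\sigma(i_{b})$.) This part is a one-line computation once $C^{*}$ is identified, so essentially all of the work sits in the bookkeeping of part (1).
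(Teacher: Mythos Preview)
Your proposal is correct and follows the same approach as the paper --- a direct appeal to the symmetry of incidence duality --- but you have filled in the details that the paper's two-sentence proof leaves implicit. In particular, your explicit matching of a degeneracy witness $(a_i,v)$ for $C$ with the witness $(v^*,a_i^*)$ for $C^*$, and your observation that the asymmetric phrasing of the definition is harmless, are exactly the points the paper's ``by symmetry'' is gesturing at.
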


\begin{proof}
Incidence duality reverses vertices and edges, which, by symmetry, does not alter purity
of a circle. Moreover, the incidence signs are also unchanged in the
incidence dual, so the sign of a circle also remains unchanged.
\end{proof}

%%%%%%%%%%%%%
%% %% Section 3 %%%%
%%%%%%%%%%%%%

\section{Operations on Oriented Hypergraphs}
\label{Section3}

\subsection{Deletion, Switching, and 2-Contraction}

\emph{Weak edge-deletion of edge }$e$, denoted $G\smallsetminus e$, is the
hypergraph resulting from the set deletion of the edge $e$ from $E$ along
with the removal of any incidences containing $e$ from $\mathcal{I}$. The
incidence dual of weak edge-deletion is \emph{weak vertex-deletion }and is
denoted $G\smallsetminus v$ for $v\in V$, and removes the vertex $v$ from $V$
along with any incidences containing $v$. The removal of a single edge or a single 
vertex has the following effect on the incidence matrix.

\begin{lemma}
\label{L10} Weak edge-deletion and weak vertex-deletion are equivalent to
column-deletion and row-deletion in the corresponding incidence matrix.
\end{lemma}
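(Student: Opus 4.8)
The statement to prove is Lemma~\ref{L10}: weak edge-deletion and weak vertex-deletion correspond to column-deletion and row-deletion in the incidence matrix.

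The plan is to argue directly from the definition of the incidence matrix $\mathrm{H}_G = [\eta_{ij}]$, where $\eta_{ij} = \sum_{k=1}^{\iota(v_i,e_j)} \sigma(v_i,e_j,k)$. First I would handle weak edge-deletion. Fix an edge $e_j$ and form $G \smallsetminus e_j$. By definition this removes $e_j$ from $E$ and removes every incidence of the form $(v,e_j,k)$ from $\mathcal{I}$, while leaving $V$, all other edges, and all incidences not involving $e_j$ untouched. Under the inherited labeling of the remaining vertices and edges, the entry of $\mathrm{H}_{G\smallsetminus e_j}$ in row $v_i$ and column $e_\ell$ (for $\ell \neq j$) is still $\sum_{k=1}^{\iota(v_i,e_\ell)} \sigma(v_i,e_\ell,k)$, since none of the incidences $(v_i,e_\ell,k)$ were disturbed. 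Hence $\mathrm{H}_{G\smallsetminus e_j}$ is exactly the matrix obtained from $\mathrm{H}_G$ by deleting the $j$-th column. The case of weak vertex-deletion follows by the same argument with the roles of rows and columns interchanged; alternatively, one invokes incidence duality, noting that $G \smallsetminus v = (G^{\ast} \smallsetminus v)^{\ast}$ in the appropriate sense and that the incidence matrix of $G^\ast$ is the transpose of $\mathrm{H}_G$, so row-deletion in $\mathrm{H}_G$ corresponds to column-deletion in $\mathrm{H}_{G^\ast}$.

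The key steps, in order, are: (1) recall that weak edge-deletion alters $\mathcal{I}$ only by discarding incidences on the deleted edge; (2) observe that every matrix entry in a surviving column depends only on incidences at that surviving edge, hence is unchanged; (3) conclude the column-deletion claim; (4) dualize (or repeat verbatim) to obtain the row-deletion claim for vertex-deletion.

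I do not anticipate a genuine obstacle here — the lemma is essentially a bookkeeping check that the definition of $\eta_{ij}$ is ``local'' to the pair $(v_i,e_j)$. The only point requiring a word of care is the implicit relabeling: after deleting $e_j$ the remaining edges are re-indexed, so one should state that the bijection between surviving columns and surviving edges is the one induced by the original labeling, and similarly for rows under vertex-deletion. With that understood, the proof is immediate.
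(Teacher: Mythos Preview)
Your proposal is correct. The paper states this lemma without proof, treating it as immediate from the definition of the incidence matrix; your argument simply makes explicit the bookkeeping that the entry $\eta_{i\ell}$ depends only on incidences at the pair $(v_i,e_\ell)$, which is exactly the intended justification.
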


Deletion of a vertex along with all incident edges is called \emph{strong
vertex-deletion}, while its incidence dual operation is \emph{strong
edge-deletion}.

A \emph{vertex-switching function} is any function $\theta:V\rightarrow \{-1,+1\}$.  \emph{Vertex-switching} the oriented hypergraph $G$ means replacing $\sigma$ by $\sigma^{\theta}$, defined by: $\sigma^{\theta}(v,e,k_1)=\theta(v)\sigma(v,e,k_1)$; producing the oriented hypergraph $G^{\theta}=(V,E,\mathcal{I},\sigma^{\theta})$.  Vertex-switching produces an adjacency sign $sgn^{\theta}$, defined by: $sgn_{e}^{\theta}(v,k_{1};w,k_{2})= \theta(v)sgn_{e}(v,k_{1};w,k_{2})\theta(w)$. 

\emph{Edge-switching} is the incidence dual of vertex-switching, and negates all incidences that contain a given edge. Observe that switching has the effect of negating a column or row.

\begin{lemma}
\label{L11} Edge-switching and vertex-switching are equivalent to column negation or row 
negation, respectively, in the corresponding incidence matrix.
\end{lemma}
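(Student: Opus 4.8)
The statement to prove is Lemma~\ref{L11}: edge-switching and vertex-switching are equivalent to column negation and row negation, respectively, in the incidence matrix.

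The plan is to argue the vertex-switching case directly from the definition of the incidence matrix, and then obtain the edge-switching case by incidence duality. First I would fix a labeled simple oriented hypergraph $G$ with incidence matrix $\mathrm{H}_G=[\eta_{ij}]$, and a vertex-switching function $\theta:V\to\{-1,+1\}$. Since $\sigma^{\theta}(v_i,e_j,k)=\theta(v_i)\sigma(v_i,e_j,k)$, the new matrix entry is
\begin{equation*}
\eta^{\theta}_{ij}=\sum_{k=1}^{\iota(v_i,e_j)}\sigma^{\theta}(v_i,e_j,k)=\sum_{k=1}^{\iota(v_i,e_j)}\theta(v_i)\sigma(v_i,e_j,k)=\theta(v_i)\,\eta_{ij}.
\end{equation*}
Thus row $i$ of $\mathrm{H}_{G^{\theta}}$ is $\theta(v_i)$ times row $i$ of $\mathrm{H}_G$; the rows with $\theta(v_i)=-1$ are negated and the rest are unchanged. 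Conversely, negating any prescribed set of rows of $\mathrm{H}_G$ is realized by the switching function that sends those vertices to $-1$ and the others to $+1$, so the correspondence is exact.

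Next I would handle edge-switching. By definition edge-switching is the incidence dual of vertex-switching: applying it to $G$ corresponds to applying a vertex-switching function to $G^{\ast}$. Since the incidence matrix of $G^{\ast}$ is the transpose of $\mathrm{H}_G$ (rows and columns swap, and $\sigma^{\ast}(e,v,k)=\sigma(v,e,k)$ leaves entries fixed), row negation in $\mathrm{H}_{G^{\ast}}$ is exactly column negation in $\mathrm{H}_G$. Hence edge-switching negates the columns indexed by the chosen edges and fixes the others, with the same bijective correspondence as before.

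The argument is essentially a one-line computation, so there is no serious obstacle; the only point requiring a small amount of care is the non-simple case, where $\eta_{ij}$ is a sum over the incidences $(v_i,e_j,k)$ rather than a single $\pm1$. The convention stated earlier---that multiple incidences at a common vertex and edge share an orientation---means $\eta_{ij}$ is $\pm\iota(v_i,e_j)$, and $\theta(v_i)$ factors out of the sum uniformly, so the same conclusion holds. It may be worth remarking explicitly that switching does not change the set of incidences $\mathcal{I}$ (only $\sigma$), so the zero pattern of the matrix is preserved and only signs of entries flip, which is what makes ``negation'' the right description.
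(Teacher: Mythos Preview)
Your argument is correct. The paper actually states Lemma~\ref{L11} without proof, treating it (like Lemma~\ref{L10}) as an immediate observation; your write-up simply supplies the routine verification from the definitions, together with the duality step for edge-switching, and there is nothing to correct beyond the harmless slip of calling $G$ ``simple'' while already writing the general sum over $k$.
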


\begin{lemma}
\label{L12}Edge-switching does not alter the signs of any adjacencies in an
oriented hypergraph.
\end{lemma}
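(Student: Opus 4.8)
The plan is to unwind the definition of the adjacency sign and observe that edge-switching contributes a factor that squares to $1$. Recall that an adjacency $(v,k_{1};w,k_{2};e)$ lives entirely inside a single edge $e$, and that $sgn_{e}(v,k_{1};w,k_{2})=-\sigma(v,e,k_{1})\sigma(w,e,k_{2})$ is, up to the global sign, the product of exactly two incidence signs, both of which come from incidences containing $e$. So the whole statement should reduce to a one-line substitution.

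First I would fix an edge-switching function $\phi\colon E\to\{+1,-1\}$ and record that $\sigma^{\phi}(u,f,k)=\phi(f)\,\sigma(u,f,k)$ for every oriented incidence $(u,f,k)$. Then for an arbitrary adjacency $(v,k_{1};w,k_{2};e)$ I would substitute directly: $sgn_{e}^{\phi}(v,k_{1};w,k_{2})=-\sigma^{\phi}(v,e,k_{1})\sigma^{\phi}(w,e,k_{2})=-\phi(e)^{2}\,\sigma(v,e,k_{1})\sigma(w,e,k_{2})$. Since $\phi(e)\in\{+1,-1\}$ we have $\phi(e)^{2}=1$, so this equals $-\sigma(v,e,k_{1})\sigma(w,e,k_{2})=sgn_{e}(v,k_{1};w,k_{2})$, as claimed. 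If one prefers the functional ``switch a single edge'' formulation used in the text, the same computation applies verbatim after taking $\phi$ to be $-1$ on the switched edge and $+1$ elsewhere; an adjacency in any other edge is untouched, and an adjacency in the switched edge picks up $\phi(e)^{2}=1$.

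There is essentially no obstacle here: the only point that needs care is the structural observation that both incidences occurring in an adjacency belong to the \emph{same} edge, so edge-switching rescales both factors of the adjacency sign by the identical value $\phi(e)$, and the two copies cancel. This should be contrasted with vertex-switching, which alters only one of the two factors when the switched vertex is an endpoint of the adjacency --- precisely the reason the analogous statement fails for vertex-switching and why $sgn^{\theta}$ had to be defined with its own transformation rule earlier in this section.
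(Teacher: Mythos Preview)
Your proof is correct and follows essentially the same approach as the paper: both arguments observe that the two incidences in an adjacency belong to the same edge, so edge-switching multiplies each by the same $\pm 1$ factor, and the product is unchanged. Your formulation via a general switching function $\phi$ is a minor and harmless generalization of the paper's single-edge computation.
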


\begin{proof}
Consider the adjacency $(v,k_{1};w,k_{2};e)$. Since switching an edge $e$ negates all incidences containing $e$, the sign of this adjacency is
\begin{equation*}
sgn_{e}(v,k_{1};w,k_{2})=-\sigma (v,e,k_{1})\sigma (w,e,k_{2})
\end{equation*}
before switching, and has sign 
\begin{equation*}
-[-\sigma (v,e,k_{1})][-\sigma (w,e,k_{2})]=sgn_{e}(v,k_{1};w,k_{2})
\end{equation*}
after switching.
\end{proof}

\begin{lemma}
\label{L13}Vertex-switching does not alter the signs of any circles in an
oriented hypergraph.
\end{lemma}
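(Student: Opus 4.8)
The plan is to fix a circle $C = \{a_0, i_1, a_1, \dots, a_{2k-1}, i_{2k}, a_{2k}\}$ with $a_0 = a_{2k}$ a vertex, fix an arbitrary vertex-switching function $\theta : V \to \{-1,+1\}$, and show $sgn^{\theta}(C) = sgn(C)$ directly from the definition of the sign of a circle. Since, as the excerpt notes just after the definition, the sign of a circle equals the product of the signs of all adjacencies appearing in the circle, it suffices to track what happens to each adjacency sign under switching and observe that the contributions of the switching factors telescope around the circle.

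First I would write out the adjacencies of $C$ explicitly. Around the circle, at each vertex $a_{2j}$ (indices mod $2k$) there is exactly one adjacency of the form $(a_{2j-1}\text{-side}; a_{2j+1}\text{-side}; e)$ where $e = a_{2j}$ is \emph{not} the right object — rather, I should be careful: the adjacencies of a circle occur \emph{within} each edge $a_{2j-1}$ of the sequence, relating the two vertices $a_{2j-2}$ and $a_{2j}$ incident to that edge via the two incidences $i_{2j-1}$ and $i_{2j}$. So there are $k$ adjacencies, one per edge of $C$, namely $(a_{2j-2}, \cdot\,; a_{2j}, \cdot\,; a_{2j-1})$ for $j = 1, \dots, k$, and $sgn(C) = \prod_{j=1}^{k} sgn_{a_{2j-1}}(a_{2j-2}, \cdot\,; a_{2j}, \cdot)$. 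By Lemma~\ref{L12}'s displayed formula for how switching transforms an adjacency sign, $sgn^{\theta}_{a_{2j-1}}(a_{2j-2},\cdot\,;a_{2j},\cdot) = \theta(a_{2j-2})\, sgn_{a_{2j-1}}(a_{2j-2},\cdot\,;a_{2j},\cdot)\, \theta(a_{2j})$ — here I am using the definition of $sgn^{\theta}$ given in the paragraph introducing vertex-switching, not Lemma~\ref{L12} itself, which is about edge-switching.

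Then I would multiply these $k$ transformed adjacency signs together: the switching factors that appear are $\theta(a_0)\theta(a_2)\cdot \theta(a_2)\theta(a_4)\cdot \theta(a_4)\theta(a_6)\cdots \theta(a_{2k-2})\theta(a_{2k})$. Each $\theta(a_{2j})$ for $1 \le j \le k-1$ appears exactly twice (once from the adjacency in edge $a_{2j-1}$ and once from the adjacency in edge $a_{2j+1}$), contributing $\theta(a_{2j})^2 = 1$; and the two endpoint factors are $\theta(a_0)$ and $\theta(a_{2k})$, which are equal since $a_0 = a_{2k}$, so their product is also $1$. Hence all $\theta$-factors cancel and $sgn^{\theta}(C) = \prod_{j=1}^k sgn_{a_{2j-1}}(a_{2j-2},\cdot\,;a_{2j},\cdot) = sgn(C)$.

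The only real subtlety — and the step I would be most careful about — is the bookkeeping of \emph{which} vertex-variables carry the $\theta$ factors and confirming that the closing condition $a_0 = a_{2k}$ is exactly what is needed to kill the last leftover pair; this is where an off-by-one or a misidentification of the adjacencies of a circle would break the argument. One should also note that the pure/degenerate distinction plays no role here, since the sign formula only references the $2k$ incidences $i_1, \dots, i_{2k}$ of the circle itself, and switching acts on incidence signs uniformly regardless of any extra incidences a degenerate circle might have; so the lemma holds for all circles, not just pure ones. Finally, one could alternatively phrase the proof at the level of the original definition $sgn(B) = (-1)^{\lfloor n/2 \rfloor}\prod_h \sigma(i_h)$ with $n = 2k$, noting that $\sigma^{\theta}(i_h) = \theta(v_h)\sigma(i_h)$ where $v_h$ is the vertex-endpoint of incidence $i_h$, and that each vertex of $C$ is the endpoint of exactly two of the $i_h$ (with the endpoint vertex $a_0 = a_{2k}$ being shared), giving the same telescoping; I would present whichever version is cleaner, likely the adjacency version since the excerpt has just emphasized that viewpoint.
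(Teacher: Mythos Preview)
Your argument is correct. The paper's own proof is the terse single-vertex version working directly at the incidence level: it fixes one vertex $a_j$ of the circle, observes that switching at $a_j$ negates exactly the two incidences $i_j$ and $i_{j+1}$ of $C$, and concludes that the product defining $sgn(C)$ is unchanged. Your adjacency-level telescoping argument is equivalent in content but handles an arbitrary switching function $\theta$ in one pass rather than one vertex at a time; the alternative you sketch at the end (tracking $\sigma^{\theta}(i_h)=\theta(v_h)\sigma(i_h)$ and noting each vertex of $C$ is the endpoint of exactly two $i_h$) is precisely the paper's argument generalized to arbitrary $\theta$. Either route is fine; the paper's is shorter, while yours makes the cancellation structure more explicit and records the useful side remark that purity plays no role.
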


\begin{proof}
Let $C=a_{0},i_{1},a_{1},i_{2},a_{2},i_{3},a_{3},...,a_{2k-1},i_{2k},a_{2k}$
be a circle and $a_{j}$ is the vertex we wish to switch. Switching $a_{j}$
will negate incidences $i_{j}$ and $i_{j+1}$, and the switched circle will
have the same sign.
\end{proof}

Switching plays an essential part in defining contraction in an oriented
hypergraph in order to have it agree with matroid contraction in the column
dependency matroid of the incidence matrix. Because we will later restrict
ourselves to a certain family of oriented hypergraphs, we only need to focus
on the contraction of $2$-edges and its incidence dual operation.

The origins of signed $2$\emph{-edge-contraction} appear in \cite{SG} and
its development remains faithful to the corresponding matroidal contraction. 
A positive $2$-edge is contracted as a graphic edge, while a negative $2$-edge 
is contracted by first switching one of the incident vertices so that the edge is 
positive and then contracting the edge.

Incidence dual to $2$-edge-contraction is $2$\emph{-vertex-contraction} and
can performed by taking the incidence dual, contracting the corresponding
edge, and then dualizing again. We say a vertex is \emph{compatibly
oriented (with respect to two of its incidences)} if the product of the two
incidences is negative. Compatible $2$-vertex-contraction has the effect
of combining the two incident edges into a single new edge with the
contracted vertex removed.

\begin{lemma}
\label{L14}Let $G$ be a minimally dependent oriented hypergraph. If $G^{\prime }$ 
is obtained by a $2$-vertex-contraction of $G$, then $G^{\prime}$ is minimally dependent.
\end{lemma}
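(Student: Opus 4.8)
The plan is to translate everything into the incidence matrix and argue by linear algebra. Write $A=\mathrm{H}_G$ and, for an edge $e$, let $A_e$ denote the column of $A$ indexed by $e$; similarly $A'_e$ for the contracted matrix $A'=\mathrm{H}_{G'}$. Let $v$ be the degree-$2$ vertex being contracted, with the two distinct incident edges $e_1,e_2$; because $\deg(v)=2$, the only nonzero entries in the row of $v$ lie in $A_{e_1}$ and $A_{e_2}$. By Lemma~\ref{L11} an edge-switch merely negates a column of $A$, which preserves minimal dependence, so we may first edge-switch to make $v$ compatibly oriented; the $2$-vertex-contraction then acts on $A$ by deleting the row of $v$ and replacing the two columns $A_{e_1},A_{e_2}$ by their sum, whose entry in the row of $v$ is now $0$. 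Writing $e_0$ for the new edge and $E'=(E\smallsetminus\{e_1,e_2\})\cup\{e_0\}$, this says: for $e\in E\smallsetminus\{e_1,e_2\}$, the column $A'_e$ is $A_e$ with the (zero) entry in the row of $v$ removed, and $A'_{e_0}$ is $A_{e_1}+A_{e_2}$ with that same row removed. After the switch we may also assume the row of $v$ has entries $+1$ in $A_{e_1}$ and $-1$ in $A_{e_2}$.

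Next I would show $G'$ is dependent. Minimal dependence of $G$ means $E$ is a circuit of the column matroid of $A$, so there is a relation $\sum_{e\in E}c_eA_e=0$ with every $c_e\neq0$ (the dependency supported on a circuit of a vector matroid is unique up to a scalar, which is what forces each $c_e$ to be nonzero). Reading this relation in the row of $v$ gives $c_{e_1}-c_{e_2}=0$, so $c_{e_1}=c_{e_2}=:c_0$; substituting and deleting the row of $v$ yields $\sum_{e\in E\smallsetminus\{e_1,e_2\}}c_eA'_e+c_0A'_{e_0}=0$, a nontrivial relation among the columns of $A'$. Hence $G'$ is dependent.

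For minimality, suppose some $T\subseteq E'$ carries a nontrivial relation $\sum_{e\in T}d_eA'_e=0$. Restricting to the rows other than $v$ and lifting back to $A$, and using that the row of $v$ is zero in every $A_e$ with $e\in T\smallsetminus\{e_0\}$ while it reads $+1,-1$ on $A_{e_1},A_{e_2}$, one obtains $\sum_{e\in T\smallsetminus\{e_0\}}d_eA_e+d_{e_0}(A_{e_1}+A_{e_2})=0$ as a relation over all rows of $A$ (with the convention $d_{e_0}=0$ when $e_0\notin T$), and this relation is still nontrivial. If $d_{e_0}=0$ it exhibits the proper subset $T\smallsetminus\{e_0\}\subseteq E\smallsetminus\{e_1,e_2\}\subsetneq E$ as dependent in $G$, contradicting that $E$ is a circuit. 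If $d_{e_0}\neq0$ it exhibits $(T\smallsetminus\{e_0\})\cup\{e_1,e_2\}$ as dependent in $G$; this set has $(|T|-1)+2=|T|+1\le|E'|+1=|E|$ elements, so whenever $T\subsetneq E'$ it is a proper subset of $E$, again contradicting that $E$ is a circuit. Either way $T=E'$, so every proper subset of $E'$ is independent and $G'$ is minimally dependent.

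The step I expect to require genuine care is the one that is not really linear algebra: pinning down the matrix description of a $2$-vertex-contraction used above. This includes the legitimacy of the preliminary edge-switch that makes $v$ compatibly oriented (justified by Lemma~\ref{L11}), the claim that contraction then amounts exactly to ``delete the row of $v$, add its two columns,'' and the standing assumption that a contracted degree-$2$ vertex meets two \emph{distinct} edges, so that its row has precisely the two entries $+1,-1$ after switching. Once this description is fixed, the dependence and minimality arguments are the short computations above; the only auxiliary fact worth stating explicitly is the uniqueness up to scalar of the dependency on a circuit of a vector matroid.
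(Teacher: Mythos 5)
Your proposal is correct and follows essentially the same route as the paper's proof: switch an incident edge so the degree-$2$ vertex is compatibly oriented (harmless by Lemma~\ref{L11}), observe that the full-support dependency of $G$ forces equal coefficients on the two columns through that vertex, and then realize the contraction as deleting the vertex's row and replacing the two columns by their sum. The only difference is that you spell out the minimality of the resulting dependency by lifting relations of $G'$ back to $G$, a verification the paper compresses into its final sentence, and you rightly flag the assumption (also implicit in the paper) that the contracted vertex meets two distinct edges.
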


\begin{proof}
Let $G=(V,E,\mathcal{I},\sigma )$ be a minimally dependent oriented
hypergraph where $V=\{v_{1}$, $v_{2}$, . . . , $v_{m}\}$ and $E=\{e_{1}$, 
$e_{2}$, . . . , $e_{n}\}$. Let $v_{1}$ be the degree-$2$ vertex we wish to
contract, and suppose the edges are labeled so that $v_{1}$ is incident to
edges $e_{1}$ and $e_{2}$. \ We may assume that $v_{1}$ is compatibly
oriented, if it is not we can switch an incident edge since switching does not alter minimal dependencies.

Since $G$ is minimally dependent, solving $\mathrm{H}\mathbf{x=0}$ yields a
fully supported coefficient vector $\mathbf{x}$. This corresponds to the linear system 
\begin{equation*}
\sum\limits_{i=1}^{n}\alpha _{i}\mathbf{e}_{i}=\mathbf{0}\text{,}
\end{equation*}%
where the values $\alpha _{i}$ are the entries of $\mathbf{x}$. Summing
only row $v_{1}$ we see that $\alpha _{1}e_{1,1}+\alpha _{2}e_{1,2}=0$ since 
$\deg (v_{1})=2$. \ Since $v_{1}$ is compatible we know that if $e_{1,1}=\pm
1$, then $e_{1,2}=\mp 1$, so $\alpha _{1}=\alpha _{2}$. Thus columns $\mathbf{e}_{1}$ 
and $\mathbf{e}_{2}$ may be replaced with a single new column 
$\mathbf{e}=\mathbf{e}_{1}+\mathbf{e}_{2}$, and row $v_{1}$ may be deleted as it contains only $0$ entries. The resulting oriented hypergraph remains minimally dependent.
\end{proof}

\subsection{Subdivision and Column Splitting}

The inverse of $2$-vertex-contraction is called \emph{edge-subdivision}. 
In a drawing of an oriented hypergraph, edge-subdivision bipartitions the
incidences of an edge and \textquotedblleft pinches off\textquotedblright\
the edge to produce a new degree-$2$ vertex between two newly created edges. 
A subdivision is \emph{compatible} if the product of the two new
incidences is negative and is \emph{incompatible} if the product of the two
new incidences is positive. If a subdivision is compatible we can
immediately contract the newly introduced vertex to reclaim the original
oriented hypergraph.

\begin{figure}[h!]
\centering
\includegraphics{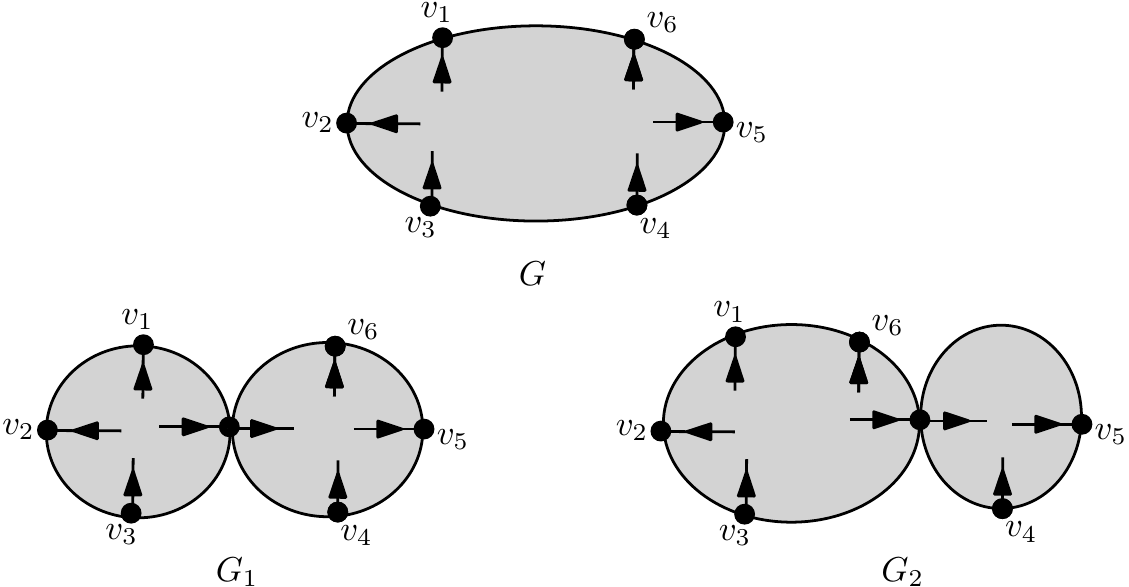}
\caption{Two different subdivisions of a hyperedge.}
\label{fig:OHSubdiv}
\end{figure}

Compatible subdivision plays a central role in understanding the structure
of dependencies for two reasons. \ First, compatible subdivision does not
alter the signs of any existing circles. \ Second, compatible subdivision
does not alter minimal dependencies.

\begin{lemma}
\label{L16}The sign of a path between two vertices in any compatible
subdivision of an edge $e$ is equal to the sign of their adjacency in $e$.
\end{lemma}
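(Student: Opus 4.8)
The plan is to unwind the definition of the sign of a path and compare it directly with the sign of the adjacency.  Recall that a compatible subdivision of $e$ at a vertex $v$ and $w$'s side replaces $e$ by two edges $e'$ and $e''$ sharing a new degree-$2$ vertex $u$, where the two incidences at $u$ (one in $e'$, one in $e''$) have product $-1$, i.e. $\sigma(u,e')\,\sigma(u,e'') = -1$.  Iterating, a compatible subdivision of $e$ into a chain produces a path whose interior vertices are all compatibly oriented.  So the first step is to set up notation: fix the adjacency $(v,k_1;w,k_2;e)$ with $sgn_e(v,k_1;w,k_2) = -\sigma(v,e,k_1)\sigma(w,e,k_2)$, and let $P = v, i_1, f_1, i_2, u_1, i_3, f_2, i_4, u_2, \ldots, f_{r}, i_{2r}, w$ be the vertex-path from $v$ to $w$ inside the subdivided edge, where $f_1, \ldots, f_r$ are the new edges, $u_1, \ldots, u_{r-1}$ the new interior (degree-$2$, compatibly oriented) vertices, and $i_1 = (v, f_1, \cdot)$ and $i_{2r} = (w, f_r, \cdot)$ are the images of the original incidences $(v,e,k_1)$ and $(w,e,k_2)$.

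Next I would simply compute $sgn(P)$ from the formula in the excerpt.  The path $P$ has $n = 2r$ incidences, so $p = \lfloor n/2 \rfloor = r$, giving
\begin{equation*}
sgn(P) = (-1)^{r}\prod_{h=1}^{2r}\sigma(i_h).
\end{equation*}
Now group the $2r$ incidence signs: $\sigma(i_1) = \sigma(v,e,k_1)$ and $\sigma(i_{2r}) = \sigma(w,e,k_2)$ are the two endpoint incidences, and the remaining $2r-2$ incidences come in $r-1$ pairs, one pair $\{\sigma(i_{2j}), \sigma(i_{2j+1})\}$ at each interior vertex $u_j$.  Compatibility of the subdivision means exactly that each such pair has product $-1$, so $\prod_{j=1}^{r-1}\sigma(i_{2j})\sigma(i_{2j+1}) = (-1)^{r-1}$.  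Substituting,
\begin{equation*}
sgn(P) = (-1)^{r}\,(-1)^{r-1}\,\sigma(v,e,k_1)\,\sigma(w,e,k_2) = -\,\sigma(v,e,k_1)\,\sigma(w,e,k_2) = sgn_e(v,k_1;w,k_2),
\end{equation*}
which is the claim.  The case $r=1$ (no subdivision at all, or a trivial one) is just $sgn(P) = (-1)\sigma(i_1)\sigma(i_2) = -\sigma(v,e,k_1)\sigma(w,e,k_2)$, consistent with the empty product convention, so no separate argument is needed.

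I expect the only real care-point to be bookkeeping: making sure the endpoint incidences of $P$ are genuinely the (relabeled) original incidences $(v,e,k_1)$ and $(w,e,k_2)$ — which follows because subdivision only "pinches off" the edge and does not touch the incidences at $v$ or $w$ themselves — and making sure that "compatible subdivision'' of a hyperedge into a chain of $r$ edges really does force all $r-1$ new interior vertices to be compatibly oriented, not just the first one.  This is immediate from the inductive description of subdivision given just before the lemma (each pinch introduces one degree-$2$ vertex whose two new incidences multiply to $-1$, and later pinches act on the resulting smaller edges), so there is no genuine obstacle; the proof is essentially this one computation together with the observation that the floor in the sign formula matches the number of interior-vertex adjacencies.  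One could alternatively phrase this as an induction on the number of subdivisions using Lemma~\ref{L14}-style reasoning, but the direct sign computation is cleaner.
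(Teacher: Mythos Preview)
Your proof is correct and follows essentially the same idea as the paper's: a direct sign computation that uses the compatibility condition $\sigma(u,e')\sigma(u,e'')=-1$ at each newly created vertex to cancel the interior contributions, leaving only $-\sigma(v,e)\sigma(w,e)$. The paper treats only a \emph{single} subdivision step and splits into the two cases ``$v,w$ on the same side of the bipartition'' versus ``$v,w$ on opposite sides,'' computing the sign as a product of two adjacency signs; you instead handle an arbitrary chain of compatible subdivisions in one shot via the general path-sign formula $sgn(P)=(-1)^{\lfloor n/2\rfloor}\prod\sigma(i_h)$ and a uniform pairing of incidences at the interior vertices. Your version is slightly more general and avoids the case split, but the underlying mechanism is identical.
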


\begin{proof}
Let $v$ and $w$ be two vertices incident to edge $e$, and edge $e$ is to be subdivided into
edges $e_{1}$ and $e_{2}$. If $v$ and $w$ and in the same side of the
bipartition of a subdivision of $e$ they will have the same adjacency sign.
If $v$ and $w$ are in different parts of the bipartition of $e$ the newly
introduced vertex $u$ between them is compatibly oriented and the sign of
the resulting $vw$-path is 
\begin{align*}
\lbrack -\sigma (v,e_{1})\sigma (u,e_{1})][-\sigma (u,e_{2})\sigma
(w,e_{2})]& =[\sigma (u,e_{1})\sigma (u,e_{2})][\sigma (v,e_{1})\sigma
(w,e_{2})] \\
& =-[\sigma (v,e_{1})\sigma (w,e_{2})] \\
& =-[\sigma (v,e)\sigma (w,e)]\text{.}
\end{align*}
Which is the same as the original adjacency sign in $e$.
\end{proof}

From this we immediately have the following corollary.

\begin{corollary}
\label{L17}Compatible subdivision does not change the signs of any circles.
\end{corollary}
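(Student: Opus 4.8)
The plan is to derive Corollary~\ref{L17} directly from Lemma~\ref{L16} by tracking what a compatible subdivision does to an arbitrary circle $C$ of $G$. The only circles whose sign could conceivably change are those that pass through the subdivided edge $e$, since a circle avoiding $e$ is literally unchanged by the operation. So I would fix a circle $C$ using edge $e$, say $C$ traverses $e$ via the adjacency $(v,k_1;w,k_2;e)$ at two of its incident vertices $v$ and $w$; recall that the sign of a circle, by the formula for $\mathrm{sgn}(B)$ given in Section~\ref{Section2}, is the product of the signs of its constituent adjacencies.

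First I would observe that after subdividing $e$ into $e_1$ and $e_2$, the circle $C$ becomes a circle $C'$ in the subdivided hypergraph in which the single step through $e$ (the adjacency at $v$ and $w$ in $e$) has been replaced by the $vw$-path running through $e_1$, the new degree-$2$ vertex, and $e_2$. Every other adjacency of $C$ appears unchanged in $C'$. By Lemma~\ref{L16}, the sign of this replacement $vw$-path equals the sign of the original adjacency $\mathrm{sgn}_e(v,k_1;w,k_2)$. Hence the product of adjacency signs defining $\mathrm{sgn}(C')$ agrees factor-for-factor with that defining $\mathrm{sgn}(C)$, so $\mathrm{sgn}(C')=\mathrm{sgn}(C)$.

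I should also address the two boundary cases. If the subdivision splits $e$ so that $v$ and $w$ end up on the same side of the bipartition, then in $C'$ the pair $v,w$ is still adjacent inside a single one of the new edges ($e_1$ or $e_2$) with the same incidence signs, so that adjacency sign is literally preserved; this is the trivial half of Lemma~\ref{L16}. The substantive case is when $v$ and $w$ are separated, where the computation in the proof of Lemma~\ref{L16} supplies the needed identity. One should also note that a subdivision could in principle create new circles through the subdivided region, but a compatible subdivision introduces a degree-$2$ vertex and any circle through it must use both $e_1$ and $e_2$ consecutively, so such circles are exactly the images $C'$ of circles $C$ of $G$ through $e$, and the argument above covers them all.

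I do not anticipate a serious obstacle here: the corollary is essentially a bookkeeping consequence of Lemma~\ref{L16} together with the multiplicativity of circle signs over adjacencies. The one point requiring mild care is making sure that when $C$ passes through $e$, it does so via a genuine adjacency $(v,k_1;w,k_2;e)$ and that this is the unique way $C$ interacts with $e$ (a pure circle uses exactly two incidences at $e$), so that replacing that one adjacency by the corresponding path is a clean, local substitution. Once that is spelled out, the equality of signs is immediate.
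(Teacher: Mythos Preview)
Your proposal is correct and follows the same route as the paper, which simply declares the corollary immediate from Lemma~\ref{L16} without writing out any details. Your expansion---fixing a circle through $e$, replacing its single adjacency in $e$ by the corresponding $vw$-path, and invoking Lemma~\ref{L16} together with the multiplicativity of circle signs over adjacencies---is exactly the argument the paper leaves implicit; the only superfluous remark is the parenthetical about purity, since \emph{every} circle (pure or not) uses exactly two incidences at each of its edges by definition.
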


The operation of subdivision has an effect on the incidence matrix called 
\emph{column splitting}. As with subdivision, we have \emph{compatible} and 
\emph{incompatible column splitting} depending on whether the associated
subdivision was compatible or incompatible.

\begin{lemma}
\label{L18}Let $M$ be an $m\times n$ $\{0,\pm 1\}$-matrix and $M^{\prime }$
be a matrix obtained by compatible column splitting $M$. \ If $M$ is
minimally dependent, then so is $M^{\prime }$.
\end{lemma}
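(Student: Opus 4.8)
The plan is to reduce the matrix statement to the hypergraphic operation already analyzed in Lemma~\ref{L14}. Recall that compatible column splitting of $M$ is, by definition, the incidence-matrix shadow of a compatible edge-subdivision of the associated oriented hypergraph, and that compatible edge-subdivision is precisely the inverse of a compatible $2$-vertex-contraction. So if $M$ is minimally dependent, interpret $M$ as the incidence matrix $\mathrm{H}_G$ of a simple oriented hypergraph $G$; then $G$ is minimally dependent. Let $G'$ be the oriented hypergraph obtained from $G$ by the compatible edge-subdivision corresponding to the given column split, so that $\mathrm{H}_{G'} = M'$. Since the subdivision is compatible, performing the $2$-vertex-contraction at the newly introduced degree-$2$ vertex returns $G$ exactly. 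Thus $G$ is obtained from $G'$ by a (compatible) $2$-vertex-contraction, and Lemma~\ref{L14} runs the other direction --- so I need the converse direction, not Lemma~\ref{L14} verbatim.

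So the actual work is a direct linear-algebra argument, parallel to the proof of Lemma~\ref{L14} but in reverse. First I would set up coordinates: say column $\mathbf{e}_j$ of $M$ is split into two columns $\mathbf{e}_j'$ and $\mathbf{e}_j''$ with $\mathbf{e}_j' + \mathbf{e}_j'' = \mathbf{e}_j$ (the entries partition the support, with the signs inherited), and $M'$ has one extra row $u$, the new degree-$2$ vertex, carrying entries in columns $j'$ and $j''$ only, of opposite sign because the subdivision is compatible --- say $+1$ in column $j'$ and $-1$ in column $j''$. Given a full-support solution $\mathbf{x}$ of $M\mathbf{x}=\mathbf{0}$ with coordinates $\alpha_i$, I would construct a candidate solution $\mathbf{x}'$ of $M'\mathbf{x}'=\mathbf{0}$ by setting the coordinate on both $j'$ and $j''$ equal to $\alpha_j$, and keeping $\alpha_i$ on every other column. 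The new row $u$ is then satisfied since $\alpha_j \cdot (+1) + \alpha_j \cdot (-1) = 0$; every old row $i\neq u$ is satisfied because $\alpha_j \mathbf{e}_j' + \alpha_j \mathbf{e}_j'' = \alpha_j \mathbf{e}_j$ reproduces the old row equation. Hence $M'$ is dependent with a full-support vector.

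It remains to check \emph{minimality} of this dependency, i.e. that no proper subset of the columns of $M'$ is dependent. Suppose $\mathbf{y}'$ is a nonzero solution of $M'\mathbf{y}'=\mathbf{0}$ supported on a proper subset $S$ of the columns. Row $u$ forces the $j'$- and $j''$-coordinates of $\mathbf{y}'$ to be equal; call the common value $\beta$. If $\beta \neq 0$ then both $j',j'' \in S$, and collapsing them back (adding the two columns) produces a nonzero solution $\mathbf{y}$ of $M\mathbf{y}=\mathbf{0}$ supported on $(S\setminus\{j',j''\})\cup\{j\}$; since $M$ is minimally dependent this support must be all columns of $M$, forcing $S$ to be all columns of $M'$, a contradiction. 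If $\beta = 0$, then $\mathbf{y}'$ is supported away from $j'$ and $j''$ and its restriction to the old columns (and old rows) gives a nonzero solution of $M\mathbf{y}=\mathbf{0}$ missing column $j$ entirely, contradicting minimal dependency of $M$. Either way we reach a contradiction, so $M'$ is minimally dependent.

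The main obstacle, such as it is, is bookkeeping: making the correspondence between ``column splitting'' and ``compatible subdivision'' fully explicit at the level of matrix entries, in particular pinning down that compatibility of the subdivision is exactly what makes the two new entries in row $u$ have opposite signs (so that $\mathbf{e}_j' + \mathbf{e}_j'' = \mathbf{e}_j$ with the correct sign conventions, rather than a difference). Once that sign convention is nailed down, both the construction of $\mathbf{x}'$ and the minimality argument are mechanical. One should also remark that $M'$ has $m+1$ rows and $n+1$ columns, so a minimal dependency of $M'$ still has the expected ``circuit'' size $n+1$, consistent with $M$ having a minimal dependency of size $n$; this is automatic from the full-support conclusion but worth stating.
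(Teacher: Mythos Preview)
Your argument is correct and follows essentially the same route as the paper: set up the split column explicitly, extend the full-support dependency of $M$ by assigning the old coefficient $\alpha_j$ to both new columns, and verify the new row is satisfied because compatibility makes its two nonzero entries opposite in sign. Your minimality argument (case-splitting on whether the shared coefficient $\beta$ vanishes and collapsing back to a dependency of $M$) is in fact more complete than the paper's, which simply asserts that the dependency is ``clearly minimal'' since both new columns carry the nonzero coefficient $\alpha_1$; the opening detour through Lemma~\ref{L14} is unnecessary but harmless.
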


\begin{proof}
Since the columns of $M$ are minimally dependent there is a single solution
(up to scaling) of the matrix equation $M\mathbf{x=0}$. Moreover, by
minimality, the vector $\mathbf{x}$ satisfying this equation must have full
support as no smaller supported vector can produce a dependency. \ Writing
this as a linear combination of the column vectors we have 
\begin{equation*}
\sum\limits_{i=1}^{n}\alpha _{i}\mathbf{c}_{i}=\mathbf{0}\text{,}
\end{equation*}%
where the $\alpha _{i}$'s are the entries of $\mathbf{x}$ and $\mathbf{c}_{i}
$ is the column vector corresponding to column $i$.

Let $\mathbf{c}_{1}$ be the column split into the new columns $\mathbf{d}%
_{1}^{\prime }$ and $\mathbf{d}_{2}^{\prime }$, and the new row created be $%
\mathbf{r}$. \ We can assume that $\mathbf{r}$ is introduced as the last row
of the newly formed matrix $M^{\prime }$. \ Let $\mathbf{d}_{1}$ and $%
\mathbf{d}_{2}$ be the column vectors obtained by removing row $\mathbf{r}$
from $\mathbf{d}_{1}^{\prime }$ and $\mathbf{d}_{2}^{\prime }$ respectively.

Extend each column $\mathbf{c}_{i}$, $i\geq 2$, to a new column $\mathbf{c}%
_{i}^{\prime }$ where%
\begin{equation*}
\mathbf{c}_{i}^{\prime }=\left[ 
\begin{array}{c}
\mathbf{c}_{i} \\ 
0%
\end{array}%
\right]
\end{equation*}%
and the entry $0$ appears in row $\mathbf{r}$. \ Thus the matrix $M^{\prime
} $ obtained by this compatible column splitting is%
\begin{eqnarray*}
M^{\prime } &=&\left[ 
\begin{tabular}{l|l|l|l|l|l}
$\mathbf{d}_{1}^{\prime }$ & $\mathbf{d}_{2}^{\prime }$ & $\mathbf{c}%
_{2}^{\prime }$ & $\mathbf{c}_{3}^{\prime }$ & $\text{. . . }$ & $\mathbf{c}%
_{n}^{\prime }$%
\end{tabular}%
\right] \\
&=&\left[ 
\begin{tabular}{l|l|l|l|l|l}
$\mathbf{d}_{1}$ & $\mathbf{d}_{2}$ & $\mathbf{c}_{2}$ & $\mathbf{c}_{3}$ & $%
\text{. . . }$ & $\mathbf{c}_{n}$ \\ 
$\pm 1$ & $\mp 1$ & $0$ & $0$ & $\text{. . . }$ & $0$%
\end{tabular}%
\right]
\end{eqnarray*}%
where the last row is row $\mathbf{r}$.

Taking the same $\alpha _{i}$'s, $i\geq 2$, that determined the minimal
dependency for $M$, we let the coefficients on both $\mathbf{d}_{1}^{\prime
} $ and $\mathbf{d}_{2}^{\prime }$ be $\alpha _{1}$. \ Taking this linear
combination of columns of $M^{\prime }$ gives us 
\begin{equation*}
\sum\limits_{i=2}^{n}\alpha _{i}\mathbf{c}_{i}^{\prime }+\alpha _{1}\mathbf{d%
}_{1}^{\prime }+\alpha _{1}\mathbf{d}_{2}^{\prime }\text{.}
\end{equation*}%
\ For rows $\mathbf{r}_{1}$ through $\mathbf{r}_{m}$ we necessarily get $0$
as this corresponds to the original linear combination of columns except
that we use either $\mathbf{d}_{1}$ or $\mathbf{d}_{2}$ depending on which
column supports the entry of $\mathbf{c}_{1}$. \ However, for row $\mathbf{r}
$ the linear system gives $\pm \alpha _{1}\mp \alpha _{1}=0$, so this linear
combination forms a dependency.

The dependency is clearly minimal as both new columns are required in the
dependency, and $\alpha _{1}\neq 0$ since the original matrix was minimally
dependent and no $\alpha _{i}$ is zero.
\end{proof}

\begin{corollary}
\label{L19}If $G$ is minimally dependent and $G^{\prime }$ is a compatible
subdivision of $G$, then $G^{\prime }$ is minimally dependent.
\end{corollary}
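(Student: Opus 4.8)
The plan is to transfer the statement to the level of incidence matrices and invoke Lemma~\ref{L18}. Recall two facts already established in the excerpt: an oriented hypergraph is minimally dependent exactly when the columns of its incidence matrix form a minimal dependency, and edge-subdivision is realized on incidence matrices by column splitting, with \emph{compatible} subdivision corresponding to \emph{compatible} column splitting. So the corollary should be essentially a translation of Lemma~\ref{L18}.

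First I would fix labelings and write $\mathrm{H}_{G'}$ explicitly in terms of $\mathrm{H}_{G}$. Suppose $G'$ is obtained from $G$ by a compatible subdivision of an edge $e$ into edges $e_{1}$ and $e_{2}$, introducing a new degree-$2$ vertex $u$. The subdivision bipartitions the incidences at $e$; each incidence $(v,e,k)$ is reassigned, with its orientation unchanged, to exactly one of $e_{1},e_{2}$. Hence, on the rows indexed by $V(G)$, the $e_{1}$-column and the $e_{2}$-column of $\mathrm{H}_{G'}$ have disjoint supports and sum to the $e$-column of $\mathrm{H}_{G}$, while all columns indexed by $E(G)\smallsetminus\{e\}$ are unchanged. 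The single new row, indexed by $u$, is zero in every column except $e_{1}$ and $e_{2}$, where it records $\sigma(u,e_{1})$ and $\sigma(u,e_{2})$. Compatibility of the subdivision means precisely $\sigma(u,e_{1})\sigma(u,e_{2})=-1$, so these two entries are $+1$ and $-1$ in some order. Comparing with the matrix $M'$ displayed in the proof of Lemma~\ref{L18}, this is exactly a compatible column splitting of $\mathrm{H}_{G}$.

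Since $G$ is minimally dependent, $\mathrm{H}_{G}$ is a minimally dependent $\{0,\pm1\}$-matrix, so Lemma~\ref{L18} applies and $\mathrm{H}_{G'}$ is minimally dependent; therefore $G'$ is minimally dependent, completing the proof. The only point that requires care — and the step I would be most deliberate about — is the bookkeeping of the previous paragraph: confirming that the column-splitting of Lemma~\ref{L18} matches subdivision on the nose, in particular that the ``disjoint supports summing to $\mathbf{c}_{1}$'' description of $\mathbf{d}_{1},\mathbf{d}_{2}$ corresponds to the incidence bipartition, and that ``compatible'' in the two settings (negative product of the two new incidences versus the $\pm1/\mp1$ pattern in the new row $\mathbf{r}$) is the same condition. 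Once that identification is in place, the corollary is immediate; no new estimate or construction beyond Lemma~\ref{L18} is needed.
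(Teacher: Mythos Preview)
Your proposal is correct and is exactly the intended argument: the paper states Corollary~\ref{L19} immediately after Lemma~\ref{L18} with no separate proof, because the operation of compatible subdivision is defined so as to correspond to compatible column splitting of the incidence matrix, and the corollary is simply the hypergraphic restatement of that lemma. Your explicit verification that the bipartition of incidences yields the disjoint-support decomposition $\mathbf{d}_{1}+\mathbf{d}_{2}=\mathbf{c}_{1}$ and that compatibility gives the $\pm1/\mp1$ pattern in the new row is precisely the bookkeeping the paper leaves implicit.
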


%%%%%%%%%%%%%
%% %% Section 4 %%%%
%%%%%%%%%%%%%

\section{Circle and Path Analogs}
\label{Section4}

\subsection{Inseparability and Flowers}

So far we have only translated the simple, closed path, property of a graphic circle 
to oriented hypergraphs. We now extend another property of graphic circles: the
property of being minimally inseparable.

An oriented hypergraph is \emph{inseparable} if every pair of incidences is
contained in a circle. An inseparable oriented hypergraph is \emph{circle-covered} 
if it contains a circle, or is a single $0$-edge.

\begin{lemma}
\label{L22}A circle-covered hypergraph contains no monovalent vertices, $1$-edges, 
or isolated vertices.
\end{lemma}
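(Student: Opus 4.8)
The plan is to prove the contrapositive-style chain directly: assume the hypergraph is circle-covered and rule out each of the three forbidden features in turn, using the definition that every pair of incidences lies in a common circle (together with the ``or is a single $0$-edge'' escape clause). First I would dispose of the degenerate case: if the hypergraph is a single $0$-edge then it has no incidences at all, hence no monovalent vertices, no $1$-edges, and no isolated vertices, so the conclusion holds vacuously. So for the rest of the argument I assume the hypergraph actually contains a circle, and in particular contains at least two incidences (a circle of length $k$ has $2k \geq 2$ incidences).

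Next I would handle isolated vertices and $1$-edges together, since they are incidence-dual to each other. An isolated vertex lies in no incidence, so it cannot be part of any circle; but every vertex of a circle-covered hypergraph that contains a circle must participate in an incidence (pick any incidence $i$ of the hypergraph — it exists — and any incidence $i'$; the circle through $i$ and $i'$ must meet every vertex it passes through via two incidences). More carefully: the cleanest route is to observe that in a circle-covered hypergraph containing a circle, every vertex and every edge is incident to something, because an isolated vertex $v$ or an empty edge $e$ contributes no incidences, yet circle-coveredness only constrains incidences — so I should instead argue that an isolated vertex or a $1$-edge or a monovalent vertex would force some incidence to lie outside every circle. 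For a monovalent vertex $v$: its unique incidence $(v,e,k)$ must, by circle-coveredness, lie on a circle $C$; but every vertex appearing in a circle is incident to exactly two of the circle's incidences within that circle, and those two incidences use that vertex — so $v$ would need degree at least $2$ in $C$, hence in $G$, contradicting $\deg(v)=1$. This is the main observation and it is essentially immediate from the definition of a circle (no vertex repeated except $a_0 = a_{2k}$, and each $a_j$ is flanked by $i_j$ and $i_{j+1}$).

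The $1$-edge case is then the incidence-dual statement: a $1$-edge $e$ has a single incidence $(v,e,k)$, which by circle-coveredness lies on a circle $C$, but in a circle each edge $a_i$ is flanked by two incidences $i_i, i_{i+1}$ both containing $a_i$, forcing $e$ to have size at least $2$. Finally, an isolated vertex $v$ and an empty edge cannot arise either: if the hypergraph contains a circle it contains an incidence $i_0$; were there an isolated vertex $v$, no incidence touches $v$, but this alone is not yet a contradiction with circle-coveredness — here I would invoke that the statement is about the edge-induced (connected) reading, or simply note that an isolated vertex has degree $0 < 2$ and the monovalent-style argument shows no circle passes through it while circle-coveredness plus connectivity of a circle-covered hypergraph containing a circle forces every vertex onto a circle; alternatively, and more safely, an isolated vertex is in particular not monovalent but the key point for the intended application is subsumed once we know there are no vertices of degree $\leq 1$ lying off every circle. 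The main obstacle I anticipate is precisely this: pinning down whether ``circle-covered'' is meant to also guarantee connectivity (so that stray isolated vertices are excluded by fiat) or whether one must argue it; I would resolve it by first proving the hypergraph is connected (any incidence lies on a circle with any other incidence, so all incidences are mutually connected, and a vertex or edge incident to none of them would have to be the single $0$-edge case), after which the degree/size arguments above finish all three claims uniformly.
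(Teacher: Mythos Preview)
The paper states Lemma~\ref{L22} without proof, so there is no argument to compare against; your task reduces to supplying the ``obvious'' justification, and for monovalent vertices and $1$-edges your argument is exactly right. In the non-degenerate case the hypergraph has at least two incidences, so any single incidence $i$ can be paired with another and hence lies on some circle; but a vertex (respectively, edge) appearing in a circle is flanked by two distinct incidences of that circle, forcing degree (respectively, size) at least $2$. That is the whole content, and you have it.

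Your hesitation about isolated vertices is well founded and is not a defect in your reasoning but a genuine soft spot in the paper's definitions. Inseparability as defined (``every pair of incidences lies in a circle'') is a condition on incidences only, so a circle together with a stray degree-$0$ vertex is literally inseparable and circle-covered, yet has an isolated vertex. The resolution is contextual rather than logical: the paper declares that ``all hypergraphic containment will take place in the edge-induced ordering,'' and an edge-induced subhypergraph by construction has $W=\{v:\ v\text{ is incident to some }f\in F\}$, so isolated vertices never occur in the objects the lemma is actually applied to (flowers, pseudo-flowers). Likewise ``a single $0$-edge'' is to be read as the edge-induced hypergraph on one empty edge, hence with empty vertex set. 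You should simply state this convention explicitly at the outset of your proof rather than leaving several alternative fixes dangling; once you do, the isolated-vertex clause is immediate and your proof is complete and clean.
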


A \emph{flower} is a circle-covered oriented hypergraph that is minimal in
the edge-induced subhypergraphic ordering. Clearly every graphic circle is a flower,
however, there are additional flowers in oriented hypergraphs other than
circles.

\begin{figure}[h!]
\centering
\includegraphics{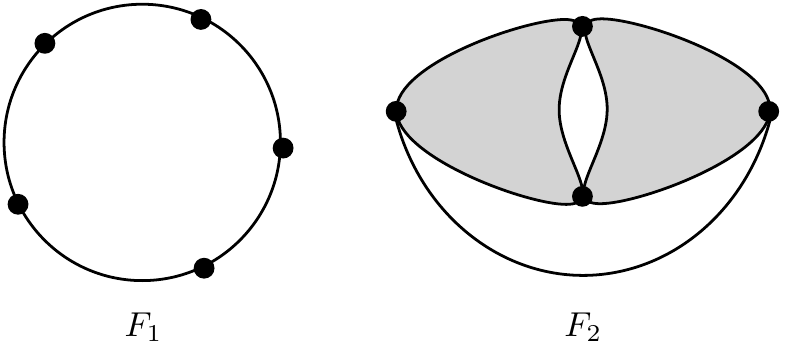}
\caption{Two examples of flowers.}
\label{fig:flower}
\end{figure}

While flowers of an oriented hypergraph can have many circles, the concept
of a flower is simplified in signed graphs.

\begin{proposition}
\label{L24}$F$ is a flower of a signed graph if, and only if, $F$ is a circle or a loose edge.
\end{proposition}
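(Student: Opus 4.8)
The plan is to prove both directions by unwinding the definitions of ``circle-covered'' and of minimality in the edge-induced ordering, specializing each to the signed-graph setting where every edge has size at most $2$. The easy direction is that a circle or a loose edge is a flower: a loose edge is a $0$-edge, hence circle-covered by definition, and it is clearly minimal since deleting its only edge leaves nothing; a graphic circle is inseparable and contains a circle, and the remark following the definition of a flower already asserts that every graphic circle is a flower. So the content is the forward direction.

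For the forward direction, let $F$ be a flower of a signed graph $\Sigma$. First I would dispose of degenerate edge sizes using Lemma~\ref{L22}: since $F$ is circle-covered it has no monovalent vertices, no $1$-edges, and no isolated vertices. In a signed graph every edge has size $0$, $1$, or $2$; Lemma~\ref{L22} rules out $1$-edges, so every edge of $F$ is either a $0$-edge (loose edge) or a $2$-edge. If $F$ contains a loose edge $e$, then $\{e\}$ is itself a circle-covered subhypergraph (a single $0$-edge), so minimality of $F$ forces $F$ to equal the loose edge, giving the second alternative. Hence I may assume every edge of $F$ is a genuine $2$-edge, so $F$ is an ordinary graph (with signs, but signs are irrelevant to the combinatorial structure here), and I must show $F$ is a single circle.

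Now $F$ is a $2$-regular-or-higher graph — more precisely, being circle-covered, $F$ has minimum degree at least $2$ (no monovalent or isolated vertices), so $F$ contains a graph-theoretic cycle; take such a cycle $C$, which as a subhypergraph is circle-covered (it is a circle, hence inseparable and containing a circle). Minimality of $F$ in the edge-induced ordering then forces $F = C$, so $F$ is a circle. The one subtlety is checking that the edge-induced subhypergraph on the edge set of $C$ is exactly $C$ and not something larger: because $C$ is a pure graphic circle and all its vertices already have degree $2$ within $C$, no additional vertex of $F$ can be incident to an edge of $C$ (such a vertex would give a vertex of degree $3$ on that edge — impossible for a $2$-edge), so the edge-induced subhypergraph $F{:}E(C)$ has vertex set exactly $V(C)$ and equals $C$. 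This is the step I expect to require the most care, since it is where the edge-induced ordering (as opposed to a vertex- or incidence-centric one) and the size-$2$ restriction interact; but it is routine once the bookkeeping is set up. Assembling the three cases — loose edge present, $1$-edge present (excluded), all edges size $2$ — completes the proof.
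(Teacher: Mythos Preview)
Your argument is correct. The paper actually states Proposition~\ref{L24} without proof, treating it as an evident specialization of the definitions to the signed-graph case; your write-up supplies exactly the details one would want, and the route you take (invoke Lemma~\ref{L22} to exclude $1$-edges and degree-$\leq 1$ vertices, peel off the loose-edge case by minimality, then in the all-$2$-edge case find a cycle and use minimality again) is the natural one. The bookkeeping you flag about $F{:}E(C)=C$ is handled correctly: since every edge of $C$ has size exactly $2$, the vertices incident to $E(C)$ are precisely $V(C)$, so the edge-induced subhypergraph on $E(C)$ recovers $C$ on the nose, and minimality forces $E(F)=E(C)$; combined with the absence of isolated vertices this gives $F=C$.
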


A \emph{pseudo-flower} is an oriented hypergraph containing at least one thorn 
such that the weak-deletion of all thorns results in a flower. The subhypergraph 
resulting from the weak-deletion of thorns in a pseudo-flower is called its \emph{flower-part}.
 A $k$\emph{-pseudo-flower} is a pseudo-flower with exactly $k$ thorns.

\begin{figure}[h!]
\centering
\includegraphics{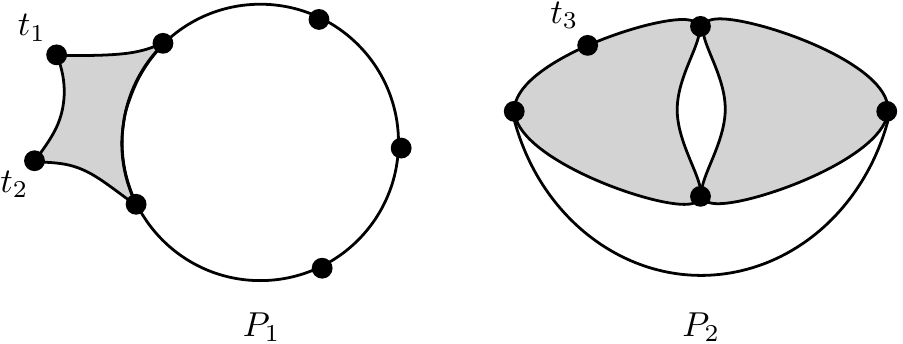}
\caption{Two pseudo-flowers that contain the hypergraphs from Figure \protect
\ref{fig:flower} as flower-parts.}
\label{fig:pflower}
\end{figure}

Pseudo-flowers occur only as a degenerate example in signed graphs.

\begin{proposition}
\label{L25}$P$ is a pseudo-flower of a signed graph if, and only if, $P$ is a half edge.
\end{proposition}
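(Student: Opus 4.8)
The plan is to prove Proposition \ref{L25} by combining Proposition \ref{L24} with the structural constraints that signed graphs place on thorns and briars. Recall that in a signed graph every edge is incident to at most two vertices, so a briar (an edge containing a thorn, i.e.\ a monovalent vertex lying on an edge that is in some circle) must be a $2$-edge: a $1$-edge or $0$-edge cannot be in a circle, and in a signed graph there are no edges of size larger than $2$. Thus in the signed-graph setting a thorn forces its briar to be an ordinary $2$-edge whose other endpoint has degree $\geq 2$.

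First I would take a pseudo-flower $P$ of a signed graph and let $F$ be its flower-part, obtained by weak-deletion of all thorns. By definition $F$ is a flower of a signed graph, so by Proposition \ref{L24} $F$ is either a circle or a loose edge. I would rule out the circle case: if $F$ were a circle, then $P$ is obtained from a circle by re-attaching at least one thorn, i.e.\ adding a monovalent vertex to an edge of the circle. But the edges of a circle in a signed graph are $2$-edges already incident to two vertices of the circle; weak-deletion of a thorn from such an edge would have had to reduce its size, so the edge in $P$ would have size $3$, which is impossible in a signed graph. Hence $F$ cannot be a circle, so $F$ is a loose edge. Then the unique edge of $F$ is a $0$-edge, and $P$ is obtained by re-attaching the thorns that were weak-deleted; since there is only one edge available and a pseudo-flower has at least one thorn, that edge in $P$ has exactly one incidence, making it a half edge (and there can be only one thorn). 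Conversely, a half edge has its single monovalent vertex as a thorn only if the edge lies in a circle — but a half edge is itself, by the signed-graph definition, declared to contain a thorn (the edge incident to exactly one vertex must be checked); here I would appeal to the definition that for a half edge the incident vertex is a thorn when the half-edge lies in a circle. Actually the cleanest route for the converse is to observe directly that weak-deletion of the single monovalent vertex of a half edge yields a loose edge, which is a flower by Proposition \ref{L24}, so a half edge is a pseudo-flower.

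The main obstacle I anticipate is the bookkeeping around the definitions of \emph{thorn} versus \emph{leaf}: a monovalent vertex is a thorn only when its incident edge lies in some circle of the ambient hypergraph, and in the half-edge case the ``circle'' in question is degenerate or the edge is a loose edge after deletion — so I need to be careful that the half edge genuinely qualifies as a pseudo-flower under the stated definition rather than as a ``twig'' attached to nothing. I would resolve this by checking that in a signed graph the only way a monovalent vertex can be a thorn (not a leaf) is when its briar is a $2$-edge whose deletion-of-the-thorn leaves a $1$-edge — but a $1$-edge is a half edge, which is never in a circle, contradiction — \emph{unless} we interpret the half edge itself as the degenerate flower. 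This tension is exactly what makes the proposition a ``degenerate example'' statement, and the write-up should make explicit which reading of \emph{flower} (allowing the single $0$-edge, and allowing half edges to be counted via their relationship to loose edges under deletion) is being used, matching the conventions set in Proposition \ref{L24} and the surrounding signed-graph discussion. Modulo that definitional care, the forward direction is a short case analysis on Proposition \ref{L24} plus the size-$2$ constraint on signed-graph edges, and the reverse direction is the one-line deletion check.
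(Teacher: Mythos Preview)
The paper states Proposition~\ref{L25} without proof (it is presented as an immediate observation, parallel to Proposition~\ref{L24}), so there is no argument in the paper to compare against. Your proposed line of reasoning is the natural one and is essentially correct.

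Your forward direction is clean: the flower-part $F$ of a pseudo-flower in a signed graph is, by Proposition~\ref{L24}, a circle or a loose edge; re-attaching a thorn to an edge of a circle would force that edge to have size $\geq 3$, impossible in a signed graph; hence $F$ is a loose edge and $P$ has a single edge. You should add one sentence ruling out the case of \emph{two} thorns on that edge: a standalone $2$-edge with both endpoints monovalent lies in no circle (it is neither a half edge, a loose edge, nor a simple cycle), so its endpoints are leaves rather than thorns and it is not a pseudo-flower. That forces exactly one thorn, i.e.\ $P$ is a half edge.

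The definitional worry you raise about the reverse direction is real but has a clean resolution already built into the paper: in Section~1.1 the paper explicitly declares that in a signed graph a half edge \emph{is} a circle. Under that convention the unique edge of a half edge is contained in a circle (itself), so its monovalent vertex is a thorn, not a leaf; weak-deletion of that thorn yields a loose edge, which is a flower by Proposition~\ref{L24}. That is the one-line check you want, and it removes the tension you describe between ``thorn'' and ``leaf''. Your alternative attempt via a $2$-edge briar leads nowhere (as you noticed), and you should simply drop that detour in favor of invoking the signed-graph circle convention directly.
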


\subsection{Arteries}

An \emph{artery} is a connected, circle-free, $1$-edge-free hypergraph in
which the degree of every vertex is $1$ or $2$, or is a single vertex. The
divalent vertices of an artery are called \emph{internal vertices} of the
artery, while the non-divalent vertices are called the \emph{external
vertices} of the artery. A $k$\emph{-artery} is an
artery with exactly $k$ external vertices.

\begin{figure}[h!]
\centering
\includegraphics{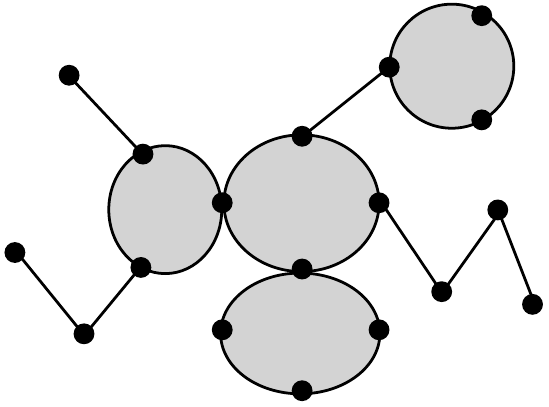}
\caption{An artery.}
\label{fig:artery}
\end{figure}

The concept of an artery lies somewhere between a graphic tree and a path. \
An artery must also contain a unique a path between every pair of its
vertices since it is incidence dual to a graphic tree, however, every
internal vertex must also have degree equal to $2$. \ The simplest arteries
are a single vertex, which is a $1$-artery or \emph{vertex-artery}, and a
graphic path, which is a $2$-artery.

Just as a graphical path can be thought of as a subdivision of a $2$-edge, a 
$k$-artery can be regarded as a subdivision of a $k$-edge. \ The structure
of arteries can be characterized through the operation of subdivision as
indicated by the following useful lemmas.

\begin{lemma}
\label{L31}$A$ is a $k$-artery, $k\geq 2$, if, and only if, $A$ can be vertex-contracted
 into a $k$-edge.
\end{lemma}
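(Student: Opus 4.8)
The plan is to prove both directions by exhibiting an explicit correspondence between the vertices of a $k$-artery and the incidences of a $k$-edge, and then to realize the artery as an iterated subdivision of that edge. For the ``if'' direction, suppose $A$ is obtained from a $k$-edge $e$ by a sequence of edge-subdivisions. Every edge-subdivision introduces one new divalent vertex and splits one edge into two; by Lemma~\ref{L31}'s hypothesis we are reversing $2$-vertex-contractions, but since $2$-vertex-contraction is the inverse of edge-subdivision (as stated in Section~3.2), the sequence of subdivisions that builds $A$ from $e$ is precisely the reverse of the sequence of $2$-vertex-contractions that returns $A$ to $e$. So I would first check that each subdivision of a circle-free, $1$-edge-free hypergraph in which every vertex has degree $1$ or $2$ again has these properties and keeps the number of external (non-divalent) vertices fixed at $k$ --- subdivision only adds divalent vertices --- and that connectedness is preserved. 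Conversely, contracting a divalent vertex of such a hypergraph also preserves all of these properties and the external vertex count, so running the contractions until no divalent vertex remains must terminate in a connected, circle-free, $1$-edge-free hypergraph whose every vertex is external, i.e.\ a single edge incident to exactly $k$ monovalent vertices, which is a $k$-edge.

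The cleaner route, which I would actually write up, is via incidence duality. The excerpt already observes that an artery is incidence dual to a graphic tree, and that edge-subdivision is dual to vertex-subdivision (the usual graph subdivision of an edge). So $A$ is a $k$-artery exactly when $A^{\ast}$ is a tree whose leaves are the images of the external vertices of $A$ --- and a tree has exactly $k$ leaves, no degree-$0$ or ``$1$-edge'' pathology, precisely when it is a subdivision of a star $K_{1,k}$ (the claim ``a tree is a subdivision of a star iff it has at most one vertex of degree $\geq 3$'' is false in general, so this needs the internal-vertex condition: in $A$ every \emph{internal} vertex is divalent, which dualizes to saying every non-leaf vertex of $A^{\ast}$ adjacent only along the subdivided structure has degree $2$; one must be careful here). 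Dualizing back, subdividing edges of the star $K_{1,k}$ corresponds to subdividing the $k$-edge, and contracting a degree-$2$ vertex of the tree corresponds to a $2$-vertex-contraction of $A$. So the statement reduces to the evident graph fact that a graph is a subdivision of $K_{1,k}$ if and only if it can be obtained from $K_{1,k}$ by subdivisions, which is a tautology, together with the characterization of which hypergraphs dualize to such subdivided stars.

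The key steps, in order: (1) verify that $2$-vertex-contraction and edge-subdivision preserve the defining properties of an artery and fix $k$; (2) for the ``only if'' direction, induct on the number of internal (divalent) vertices of $A$ --- if there are none, $A$ is already a $k$-edge; if there is a divalent vertex $v$, contract it (this is a $2$-vertex-contraction, and $v$ being divalent makes it eligible after possibly switching an incident edge, per Section~3.2), obtaining an artery $A'$ with one fewer internal vertex and the same $k$ external vertices, and apply induction; (3) for the ``if'' direction, observe that starting from a $k$-edge and reversing each contraction by the corresponding compatible subdivision builds back an artery, using that subdivision of an artery is an artery from step (1).

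The main obstacle I anticipate is step (2)'s eligibility check and the bookkeeping around degenerate small cases --- specifically, making sure that when $A$ has exactly one internal vertex the two incident edges are genuinely distinct (so contraction yields a single $k$-edge rather than something degenerate), and handling $k=2$ where the ``$k$-edge'' is an ordinary signed $2$-edge and the artery is a graphic path, so the lemma recovers the familiar statement that a path is a subdivision of an edge. The condition $k \geq 2$ in the statement is presumably there to exclude the vertex-artery ($k=1$), which has no edges at all and cannot contract to a $1$-edge; I would note this explicitly. Orientation signs play no role in the combinatorial claim --- purity and signs are handled by the earlier lemmas (Corollary~\ref{L17}, Lemma~\ref{L13}) --- so I would remark that the whole argument takes place on the underlying hypergraph.
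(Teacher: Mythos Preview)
The paper states this lemma (and its companion Lemma~\ref{L32}) without proof, treating both as elementary structural observations; there is no argument in the paper to compare against. Your inductive approach on the number of internal (divalent) vertices is exactly the natural way to supply the omitted details, and your step~(2) proof of the ``only if'' direction is correct: circle-freeness guarantees the two edges at a divalent vertex are distinct, and the verification that $2$-vertex-contraction of a $k$-artery yields a $k$-artery with one fewer internal vertex goes through as you outline (the merged edge has size $|e_1|+|e_2|-2\geq 2$, degrees of the remaining vertices are unchanged, and the cyclomatic number is preserved, so circle-freeness persists).

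The incidence-duality route you sketch is more trouble than it is worth. The dual of a $k$-artery is not literally a graphic tree --- the $k$ external vertices dualize to size-$1$ edges (half edges), not leaves of a tree --- and your own parenthetical caveats show you already sense this. The direct induction is shorter and avoids the bookkeeping you flag.

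One genuine subtlety in your ``if'' direction: the claim ``subdivision of an artery is an artery'' requires the bipartition of incidences to have both parts non-empty, else a $1$-edge is created. When you reverse an arbitrary sequence of $2$-vertex-contractions starting from an unknown $A$, nothing you have said rules out that one of the two merged edges was a $1$-edge in $A$; indeed, the hypergraph with a $1$-edge $e_1=\{v\}$ and a $3$-edge $e_2=\{v,w_1,w_2\}$ contracts at $v$ to a $2$-edge yet is not an artery. The paper's informal use of ``bipartition'' presumably excludes empty parts, and under that reading (equivalently, restricting to $1$-edge-free hypergraphs throughout) your argument is complete. It is worth one sentence making this hypothesis explicit rather than leaving it implicit in step~(1).
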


\begin{lemma}
\label{L32}$A$ is a $k$-artery, $k\geq 2$, if, and only if, $A$ is a subdivision of a $k$-edge.
\end{lemma}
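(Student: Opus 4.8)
\textbf{Proof proposal for Lemma \ref{L32}.}
The plan is to derive this lemma directly from Lemma \ref{L31} together with the fact, recorded in the discussion of subdivision and $2$-vertex-contraction, that edge-subdivision is precisely the inverse operation of $2$-vertex-contraction. I would first treat the forward direction: suppose $A$ is a $k$-artery with $k \geq 2$. By Lemma \ref{L31}, $A$ can be vertex-contracted into a $k$-edge $e$; that is, there is a finite sequence of $2$-vertex-contractions carrying $A$ to the single edge $e$. Reversing this sequence, each $2$-vertex-contraction is undone by an edge-subdivision, so $e$ can be edge-subdivided (repeatedly) back into $A$. Hence $A$ is obtained from the $k$-edge $e$ by a sequence of subdivisions, which is exactly what it means for $A$ to be a subdivision of a $k$-edge.

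For the converse, suppose $A$ is a subdivision of a $k$-edge $e$, $k \geq 2$. Then $A$ is obtained from $e$ by a sequence of edge-subdivisions. Each edge-subdivision introduces a new degree-$2$ vertex and can be immediately reversed by a $2$-vertex-contraction, so running the sequence backwards shows $A$ can be vertex-contracted into the $k$-edge $e$. By Lemma \ref{L31} again, $A$ is a $k$-artery. This closes the equivalence.

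The one point that deserves care — and which I expect to be the main obstacle — is making sure the class of objects is stable under these operations in both directions, i.e.\ that a subdivision of a $k$-edge is always again a connected, circle-free, $1$-edge-free hypergraph with all vertex degrees in $\{1,2\}$ and exactly $k$ non-divalent vertices, and conversely that the intermediate objects appearing when one vertex-contracts an artery are again arteries. Concretely: subdividing an edge replaces it by two edges joined at a new divalent vertex, which cannot create a circle (the incidence dual of the tree structure is preserved), cannot create a $1$-edge provided the original edge has size $\geq 2$ (and size is nondecreasing under subdivision since each new edge inherits at least one incidence and gains the new one, giving size $\geq 2$), keeps the hypergraph connected, and changes no original vertex's degree while adding one new degree-$2$ vertex — so the count of external vertices is unchanged at $k$. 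The dual statement for $2$-vertex-contraction of an artery is handled symmetrically via incidence duality and the characterization of arteries as incidence duals of graphic trees already noted in the text. Once these stability observations are in place, the bijection between subdivision sequences and reverse contraction sequences is immediate and the lemma follows from Lemma \ref{L31}.
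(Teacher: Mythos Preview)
The paper states Lemma \ref{L32} (and Lemma \ref{L31}) without proof, treating both as evident structural characterizations of arteries. Your approach---deducing Lemma \ref{L32} from Lemma \ref{L31} via the inverse relationship between edge-subdivision and $2$-vertex-contraction recorded in Section \ref{Section3}---is exactly the intended reading and is correct. The stability check you flag is the only real content: subdividing an edge of size $\geq 2$ into two nonempty parts yields two edges each of size $\geq 2$, introduces one new divalent vertex, preserves connectedness and circle-freeness, and leaves the set of monovalent (external) vertices untouched, so the result is again a $k$-artery; the reverse contraction step is symmetric. With that verified, the equivalence with Lemma \ref{L31} is immediate. One small caveat worth making explicit in your write-up: since ``artery'' is a notion on the underlying (unoriented) hypergraph, the compatible/incompatible distinction for subdivision is irrelevant here, so you need not worry about switching before contracting.
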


Flowers and pseudo-flowers are simplified in signed graphs, as indicated by
Propositions \ref{L24} and \ref{L25}, and we have the following result for
arteries of signed graphs.

\begin{proposition}
\label{L33}$A$ is an artery of a signed graph if, and only if, $A$ is a path.
\end{proposition}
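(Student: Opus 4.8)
The plan is to prove Proposition~\ref{L33} by translating the hypergraphic definition of an artery into the language of signed graphs, where edges have size at most~$2$. First I would observe that in a signed graph the only edges available are loose edges (size~$0$), half edges (size~$1$), and ordinary $2$-edges. Since an artery is required to be $1$-edge-free, no half edges can occur; and a loose edge would leave the structure disconnected unless it were the whole artery, but a single loose edge has no vertices and hence is not a single vertex either, so loose edges are also excluded (when the artery has at least one vertex). Thus every edge of a signed-graphic artery is an ordinary $2$-edge, and the artery is an ordinary graph in which every vertex has degree $1$ or $2$, or the artery is a single vertex.

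Next I would invoke the remaining two defining properties: the artery is connected and circle-free. A connected graph in which every vertex has degree $1$ or $2$ is either a path or a cycle (this is the standard classification of connected graphs of maximum degree~$2$). The circle-free hypothesis rules out the cycle, leaving only a path; and the degenerate case of a single vertex is itself a path (the trivial one), matching the ``$1$-artery or vertex-artery'' terminology introduced just before the proposition. This establishes the forward direction.

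For the converse, I would check directly that every path, viewed as a signed graph, satisfies the artery axioms: a path is connected, contains no circle, uses only $2$-edges (hence is $1$-edge-free), and its internal vertices have degree~$2$ while its endpoints have degree~$1$; a single-vertex path is the vertex-artery. Both inclusions together give the ``if and only if.'' Alternatively, one could derive the converse from Lemma~\ref{L32}: a path is precisely a subdivision of a $2$-edge, and a subdivision of a $k$-edge with $k=2$ is a $2$-artery by Lemma~\ref{L32}; since a $2$-edge in a signed graph is exactly an ordinary edge, this recovers the statement, though handling the $k=1$ (vertex-artery) case still requires the direct observation above.

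I do not anticipate a genuine obstacle here — the proposition is a routine specialization, parallel to Propositions~\ref{L24} and~\ref{L25}. The only point requiring a little care is the bookkeeping of degenerate edges: making sure that half edges are excluded by the $1$-edge-free condition and that loose edges are excluded by connectedness together with the artery having a vertex, so that one does not accidentally admit a stray loose or half edge attached to a path. Once those degenerate cases are dispatched, the argument reduces to the elementary fact that a connected circle-free graph with all degrees in $\{1,2\}$ is a path.
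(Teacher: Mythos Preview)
The paper states Proposition~\ref{L33} without proof, treating it (like Propositions~\ref{L24} and~\ref{L25}) as a routine specialization left to the reader. Your argument correctly supplies that verification: in a signed graph the $1$-edge-free condition rules out half edges, connectedness rules out stray loose edges once a vertex is present, and then the elementary fact that a connected circle-free graph with all vertex degrees in $\{1,2\}$ is a path finishes the forward direction; the converse is immediate from the definitions (or, as you note, from Lemma~\ref{L32} in the nondegenerate case).

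One minor quibble in your edge-case bookkeeping: you dismiss the lone loose edge on the grounds that it ``is not a single vertex,'' but a single $0$-edge in fact vacuously satisfies the \emph{first} clause of the artery definition (it is connected, circle-free, $1$-edge-free, and has no vertex of forbidden degree), so that reasoning does not exclude it. This does no damage to the proposition, however, since a single $0$-edge is also a length-$0$ edge-path under the paper's path definition, and the biconditional holds on that boundary case regardless. Apart from this small wobble, your proof is exactly the direct check the paper is implicitly inviting.
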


\subsection{Arterial Connections and Hypercircles}

Two hypergraphs that are either disjoint or have a single vertex in common
are said to be \emph{nearly-disjoint}. \ An \emph{arterial connection of
hypergraphs} is the union of a collection of pairwise nearly-disjoint
hypergraphs\ $\mathcal{H}$ with a collection of pairwise disjoint arteries $
\mathcal{A}$ satisfying:

\begin{enumerate}
\item[AC1.] The arterial connection is connected.

\item[AC2.] If $H\in \mathcal{H}$, $A\in \mathcal{A}$, and $H\cap A\neq
\emptyset $, then $H\cap A=(v,\emptyset ,\emptyset )$ and $v$ is an external
vertex of $A$.

\item[AC3.] If $H_{1}$, $H_{2}\in \mathcal{H}$ and $H_{1}\cap H_{2}\neq
\emptyset $, then $H_{1}\cap H_{2}=(w,\emptyset ,\emptyset )\in \mathcal{A}$.

\item[AC4.] If $H_{1}$, $H_{2}\in \mathcal{H}$ and $H_{1}\cap
H_{2}=(w,\emptyset ,\emptyset )$, then $H_{1}$ and $H_{2}$ are the only
elements of $\mathcal{H}$ that contain $w$.

\item[AC5.] Weak-deletion of any edge or vertex of an artery in $\mathcal{A}$
disconnects the arterial connection.
\end{enumerate}

An arterial connection of special interest is a \emph{thorn-connection}
which is the union of a collection of nearly-disjoint pseudo-flowers $
\mathcal{P}$ and a collection of pairwise disjoint arteries $\mathcal{A}$
satisfying:

\begin{enumerate}
\item[TC1.] A thorn-connection is an arterial connection.

\item[TC2.] If $P\in \mathcal{P}$, $A\in \mathcal{A}$, and $P\cap A\neq
\emptyset $, then $P\cap A=(t,\emptyset ,\emptyset )$ where $t$ is a thorn
of $P$.
\end{enumerate}

Observe that if two pseudo-flowers of a thorn-connection share a vertex in
common then it must be a thorn of each.

An arterial connection is said to be \emph{floral} if it contains no
monovalent vertices. The oriented hypergraph resulting from the 
vertex-contraction of the vertices belonging to the arteries of a 
floral thorn-connection is a \emph{hypercircle}. Observe that this contraction 
preserves the flower-parts of each pseudo-flower. A hypercircle containing 
exactly $k\geq 2$ flower-parts is called a $k$\emph{-hypercircle}, while 
a $0$-edge is a $0$\emph{-hypercircle}, and flower is a $1$\emph{-hypercircle}.

We say two pseudo-flowers $P_{1}$ and $P_{2}$ are \emph{adjacent} if they
share a single briar in common which is also an isthmus in $P_{1}\cup P_{2}$. 
A hypercircle is the vertex-contraction of a floral thorn-connection
into adjacent pseudo-flowers.

%%%%%%%%%%%%%
%% %% Section 5 %%%%
%%%%%%%%%%%%%

\section{The Cyclomatic Number and Incidence}
\label{Section5}

\subsection{The Incidence Graph}

The \emph{oriented incidence graph of an oriented hypergraph} $G=(V_{G},E_{G},%
\mathcal{I}_{G},\sigma )$ is the oriented bipartite graph $\Gamma _{G}$ with
vertex set $V_{\Gamma }=V_{G}\cup E_{G}$, edge set $E_{\Gamma }=\mathcal{I}%
_{G}$ and orientation function $\sigma $. Observe that $\Gamma _{G}$
contains no parallel edges if, and only if, $G$ is a simple hypergraph.

\begin{figure}[h!]
\centering
\includegraphics{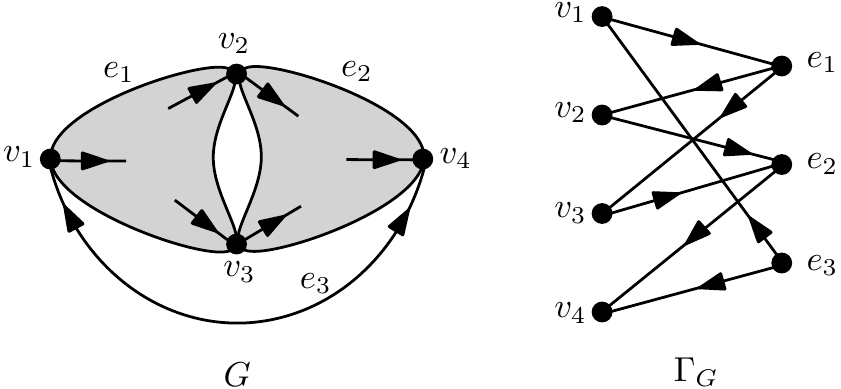}
\caption{An oriented hypergraph $G$ and its incidence graph $\Gamma $.}
\label{fig:incidencegraph}
\end{figure}

The incidence graph provides an alternate point of view to examine some
oriented hypergraphic concepts.

\begin{lemma}
\label{L34}$C$ is a circle of an oriented hypergraph $G$ if, and only if, $C$
is a circle of the incidence graph $\Gamma $.
\end{lemma}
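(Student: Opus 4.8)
The plan is to exploit the fact that the oriented incidence graph $\Gamma_G$ is, essentially by construction, a faithful rewriting of the incidence data of $G$: its vertices are $V_G \cup E_G$ and its edges are exactly the incidences $\mathcal{I}_G$, with an edge $i = (v,e,k)$ joining the vertex $v$ to the vertex $e$. So a circle of $\Gamma_G$ in the ordinary graph-theoretic sense is a closed walk $b_0, i_1, b_1, i_2, \dots, i_{2k}, b_{2k} = b_0$ with no repeated vertex, edge, or (here) incidence. Since $\Gamma_G$ is bipartite with parts $V_G$ and $E_G$, any such circle must have even length and must strictly alternate between vertices-of-$G$ and edges-of-$G$; after rotating so that $b_0 \in V_G$, the sequence is precisely of the form $a_0, i_1, a_1, \dots, a_{2k-1}, i_{2k}, a_{2k} = a_0$ with the $a_j$ alternating between $V_G$ and $E_G$ and each $i_h$ an incidence containing $a_{h-1}$ and $a_h$. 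That is exactly the definition of a circle of $G$ given in Section~\ref{Section2}.

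First I would make the bipartiteness observation explicit and note that the two parts of the bipartition are $V_G$ and $E_G$, so every circle of $\Gamma_G$ has even length $2k$ and alternates sides. Second, I would unwind the definition of a circle in $\Gamma_G$ (a cyclic alternating sequence of vertices and edges of $\Gamma_G$, no repeats, first vertex equal to last) and match it term by term with the definition of a circle of length $k$ in $G$: vertices of $\Gamma_G$ on the $V_G$-side become the vertices of the circle of $G$, vertices of $\Gamma_G$ on the $E_G$-side become the edges of the circle of $G$, and edges of $\Gamma_G$ are exactly incidences of $G$, so the ``no repeated incidence'' condition transfers directly. The ``$a_0 = a_{2k}$ and otherwise no repeats'' condition is literally the definition of a circle in a graph. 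Third, I would check the converse direction is the same identification read backwards: a circle of $G$, written in its standard alternating form, is a closed alternating non-repeating walk in $\Gamma_G$, hence a circle of $\Gamma_G$; the ``by symmetry assume $a_0$ is a vertex'' normalization in the definition of a circle of $G$ is harmless since cyclic rotation does not change a circle of $\Gamma_G$ either.

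There is essentially no obstacle here; the lemma is a definitional unpacking, and the only thing to be careful about is the bookkeeping of which side of the bipartition of $\Gamma_G$ plays the role of ``vertices'' versus ``edges'' in the hypergraph circle, together with the remark that the incidences of $G$ are in bijection with the edges of $\Gamma_G$ so that the non-repetition clauses correspond. If I wanted to be thorough I would also remark that this identification is length-preserving (a circle of length $k$ in $G$ corresponds to a cycle on $2k$ edges in $\Gamma_G$), which is what makes the later cyclomatic-number computations go through, but that is not needed for the bare equivalence asserted in Lemma~\ref{L34}.
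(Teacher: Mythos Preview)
Your proposal is correct and takes essentially the same approach as the paper: both arguments simply unpack the definition of a hypergraphic circle and observe that, under the identification of vertices of $\Gamma$ with $V_G\cup E_G$ and edges of $\Gamma$ with $\mathcal{I}_G$, it coincides term-for-term with the definition of a graphic circle in $\Gamma$. Your write-up is somewhat more explicit (noting bipartiteness, even length, and the harmlessness of the cyclic rotation), but the underlying idea is identical.
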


\begin{proof}
A hypergraphic circle is a sequence  $a_{0},i_{1},a_{1},i_{2},a_{2},i_{3},a_{3},...,a_{2k-1},i_{2k},a_{2k}$, where 
$\{a_{j}\}$ is an alternating sequence of vertices and edges, $i_{j}$ is an
incidence containing $a_{j-1}$ and $a_{j}$, and\ no vertex, edge, or
incidence is repeated except $a_{0}=a_{2k}$. In the incidence graph this is
an alternating sequence of vertices (hypergraph vertices and edges) and
edges (hypergraph incidences) where no vertex or edge repeats and whose
end-points coincide, which is a graphic circle. The definitions coincide
when translating between hypergraphs to incidence graphs.
\end{proof}

A \emph{chord} of a graphic circle $C$ is an edge not in $C$ whose end-points are in $C$.

\begin{lemma}
\label{L35}$C$ is a degenerate circle of an oriented hypergraph $G$ if, and
only if, there exists a chord of $C$ in the incidence graph $\Gamma $.
\end{lemma}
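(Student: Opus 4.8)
The statement to prove is Lemma~\ref{L35}: a circle $C$ of an oriented hypergraph $G$ is degenerate if and only if there is a chord of $C$ in the incidence graph $\Gamma$. The plan is to unwind both definitions in the language of $\Gamma$ and check that they match up. By Lemma~\ref{L34}, $C$ is simultaneously a hypergraphic circle in $G$ and a graphic circle in $\Gamma$, so I may freely regard $C$ as the cyclic sequence $a_0,i_1,a_1,\dots,a_{2k-1},i_{2k},a_{2k}=a_0$, where the $a_j$ alternate between hypergraph-vertices and hypergraph-edges and the $i_j$ are incidences; in $\Gamma$ these same $a_j$ are the vertices of the circle and the $i_j$ are its edges.

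\textbf{Forward direction.} Suppose $C$ is degenerate in $G$. By definition there is an edge $a_i \in C$ and a vertex $v \in C$ with $v \neq a_{i-1}$, $v \neq a_{i+1}$, and $v$ incident to $a_i$. Let $(v, a_i, k)$ be a witnessing incidence. I claim this incidence, viewed as an edge of $\Gamma$, is a chord of $C$. Its endpoints in $\Gamma$ are $v$ and $a_i$, both of which lie on $C$. It is not itself an edge of $C$: the edges of $C$ incident to the vertex $a_i$ in $\Gamma$ are exactly $i_i$ and $i_{i+1}$, which join $a_i$ to $a_{i-1}$ and to $a_{i+1}$ respectively; since $v$ is neither $a_{i-1}$ nor $a_{i+1}$, the incidence $(v,a_i,k)$ is distinct from both $i_i$ and $i_{i+1}$, hence is not an edge of $C$ but has both endpoints on $C$. (One should also note $v \ne a_i$, which holds because in the bipartite graph $\Gamma$ a vertex and an edge of $G$ cannot coincide; so the chord is a genuine non-loop edge.) Thus $(v,a_i,k)$ is a chord of $C$ in $\Gamma$.

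\textbf{Reverse direction.} Suppose $C$ has a chord $e$ in $\Gamma$, say with endpoints $a_p$ and $a_q$ on $C$, where $e \notin E(C)$. Since $\Gamma$ is bipartite with parts $V_G$ and $E_G$, and $e$ is an edge of $\Gamma$, one endpoint is a hypergraph-vertex and the other a hypergraph-edge; relabel so that $a_p = v \in V_G$ and $a_q = f \in E_G$, with $v,f \in C$. As an edge of $\Gamma$, $e$ is precisely an incidence $(v,f,k)$, so $v$ is incident to $f$ in $G$. It remains to check that $v \neq a_{q-1}$ and $v \neq a_{q+1}$, i.e. that $v$ is not a neighbour of $f$ \emph{along} $C$. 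If $v$ were $a_{q-1}$ or $a_{q+1}$, then the circle $C$ would contain the incidence $i_q$ or $i_{q+1}$ joining $v$ to $f$; but we must rule out that $e$ \emph{is} that incidence. This is where a little care is needed: a priori there could be several incidences of multiplicity between $v$ and $f$, and $e$ could be a ``parallel'' one distinct from $i_q$. The clean way around it is to observe that if $v \in \{a_{q-1}, a_{q+1}\}$ then $v$ and $f$ together with the edge $e$ and the (one or two) edges of $C$ between them already exhibit the chord situation trivially --- but more to the point, the definition of degenerate only requires the \emph{existence} of some vertex $v \in C$, some edge $a_i \in C$, with $v$ incident to $a_i$ and $v \notin \{a_{i-1},a_{i+1}\}$. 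So if the chord's vertex-endpoint happens to be a cyclic neighbour of its edge-endpoint, I instead argue directly from multiplicity / simplicity conventions, or I simply note that in the simple case $e \ne i_q, i_{q+1}$ forces $v \ne a_{q-1}, a_{q+1}$ because those are the only edges of $\Gamma$ at $f$ lying on $C$, and each joins $f$ to a distinct cyclic neighbour. Concluding: $v$ is a vertex of $C$ incident to the edge $f$ of $C$ with $v$ not a cyclic neighbour of $f$, so $C$ is degenerate.

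\textbf{Main obstacle.} The only subtlety is the multiplicity/parallel-edge issue in the reverse direction: in a non-simple hypergraph, $\Gamma$ has parallel edges, so a chord $e=(v,f,k)$ could coincide in endpoints with an edge $i_q$ of $C$ while being a different incidence. The resolution is to appeal to the fact that the definition of ``degenerate'' quantifies over the existence of an incidence, not a particular one: if $(v,f,k)$ is parallel to some $i_q \in E(C)$ then $\deg_G(f) \ge 3$ witnesses a suitable other vertex, or one can simply take $e$ itself as a second incidence between $v$ and $f$ and note that the hypergraphic circle through $f$ uses only $i_q,i_{q+1}$, so the \emph{incidence} $e$ lies outside $C$ while its vertex endpoint $v$ need not be excluded --- precisely matching the degeneracy condition once one recalls that $v \ne a_{i\pm1}$ is a condition on vertices, not incidences, and can be re-sought at another edge of $C$ if necessary. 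I expect to dispatch this in a sentence, since the paper's running conventions (multiple incidences treated uniformly) keep it from becoming genuinely problematic.
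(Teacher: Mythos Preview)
Your overall approach---unwind both definitions in the incidence graph and check they match---is exactly the paper's. Your forward direction is correct and in fact more carefully argued than the paper's one-line version.

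In the reverse direction you correctly identify a subtlety the paper's proof ignores: when $G$ is not simple, a chord $e=(v,f,k)$ of $C$ in $\Gamma$ can have $v\in\{a_{q-1},a_{q+1}\}$ while still being distinct from the circle-incidences $i_q,i_{q+1}$. However, your proposed resolution does not work, and cannot work, because the lemma as stated is false without a simplicity hypothesis. Take the length-$2$ circle $C=v_0,i_1,e_0,i_2,v_1,i_3,e_1,i_4,v_0$ in a hypergraph that also carries one further incidence $(v_0,e_0,2)$. This extra incidence is a chord of $C$ in $\Gamma$ (its endpoints $v_0,e_0$ lie on $C$ and it is not among $i_1,\dots,i_4$), yet $C$ is \emph{not} degenerate: the only vertices on $C$ are $v_0$ and $v_1$, and each is already a cyclic neighbour of each edge on $C$, so the clause ``$v\neq a_{i-1},a_{i+1}$'' is never satisfiable. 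Your appeals to ``$\deg_G(f)\ge 3$ witnesses a suitable other vertex'' and to ``re-seeking at another edge of $C$'' both fail in this example. The paper's proof hides the same gap by tacitly restating degeneracy as ``there is an incidence not in $C$ whose vertex and edge both lie on $C$,'' a reformulation that is equivalent to the official definition only when $G$ is simple. So you were right to be suspicious; the honest fix is to add a simplicity hypothesis (or to adopt the incidence-level reformulation of degeneracy), not to patch the argument.
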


\begin{proof}
A circle $C$ is degenerate if there is an incidence that does not appear in the
circle but belongs to an edge and vertex of the circle. In the incidence
graph this incidence is an edge not in $C$ whose end-points are in $C$. The
concepts coincide when translating between hypergraphs to incidence graphs.
\end{proof}

We immediately have a restatement of Lemma \ref{L35} in terms of pure
circles.

\begin{corollary}
\label{L36}$C$ is a pure circle of an oriented hypergraph $G$ if, and only
if, $C$ is chord-free in the incidence graph $\Gamma $.
\end{corollary}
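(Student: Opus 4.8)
The plan is to observe that this corollary is simply the logical negation of Lemma \ref{L35}, so the proof amounts to a one-line contrapositive argument. By definition a circle is \emph{pure} precisely when it is not degenerate, and $C$ is \emph{chord-free} in $\Gamma$ precisely when no chord of $C$ exists in $\Gamma$. First I would invoke Lemma \ref{L34} to note that a circle $C$ of $G$ is genuinely a circle of the incidence graph $\Gamma$, so that the notion of a chord of $C$ in $\Gamma$ is well-defined. Then I would negate both sides of the biconditional in Lemma \ref{L35}: the statement ``$C$ is degenerate in $G$ if and only if $C$ has a chord in $\Gamma$'' becomes ``$C$ is not degenerate in $G$ if and only if $C$ has no chord in $\Gamma$,'' which is exactly ``$C$ is pure in $G$ if and only if $C$ is chord-free in $\Gamma$.''

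There is no genuine obstacle here; the only point requiring a moment's care is that the equivalence concerns a circle $C$ already known to be a circle of $G$ (equivalently, of $\Gamma$ by Lemma \ref{L34}), so that ``pure'' and ``chord-free'' are being applied to the same object viewed in the two settings. All of the substance has in fact already been discharged in the proof of Lemma \ref{L35}, where degeneracy of $C$ --- the existence of an incidence joining a vertex and an edge of $C$ but not appearing on $C$ --- was identified with the existence of an edge of $\Gamma$ lying outside $C$ but with both endpoints on $C$. Thus the corollary follows immediately, and the write-up need be no more than the sentence exhibiting this negation.
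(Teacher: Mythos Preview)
Your proposal is correct and matches the paper's approach exactly: the paper states this corollary immediately after Lemma~\ref{L35} with the remark that it is ``a restatement of Lemma~\ref{L35} in terms of pure circles,'' and offers no separate proof. Your contrapositive argument is precisely the content of that remark.
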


Some of the terminology introduced for oriented hypergraphs are direct
translations from graphic definitions when viewed through incidence graphs.

\begin{lemma}
\label{L37}Let $G$ be an oriented hypergraph with incidence graph $\Gamma $.
\ $G$ is inseparable if, and only if, $\Gamma $ is inseparable.
\end{lemma}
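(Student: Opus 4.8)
The plan is to prove both implications by unwinding the definition of \emph{inseparable} through the incidence graph, using Lemma~\ref{L34} to pass circles of $G$ to circles of $\Gamma$ and back. The dictionary is built into the construction of $\Gamma$: the edges of $\Gamma$ are precisely the incidences of $G$, and under Lemma~\ref{L34} the incidences of $G$ occurring in a circle of $G$ are exactly the edges of $\Gamma$ occurring in the corresponding circle of $\Gamma$. One more fact is needed, and it is immediate: every edge $i=(v,e,k)$ of $\Gamma$ has two distinct endpoints $v\in V_G$ and $e\in E_G$, so a circle of $\Gamma$ that contains either of the two incidences of $\Gamma$ lying on $i$ must contain the edge $i$, and hence both of those incidences. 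With this set up the argument is pure bookkeeping.

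For the forward implication, assume $G$ is inseparable and let two incidences of $\Gamma$ be given, lying on edges $i$ and $j$ of $\Gamma$; these edges are incidences $i,j$ of $G$. If $i\neq j$, inseparability of $G$ gives a circle $C$ of $G$ containing the incidences $i$ and $j$; by Lemma~\ref{L34}, $C$ is a circle of $\Gamma$ containing the edges $i$ and $j$, hence containing the two chosen incidences of $\Gamma$. If $i=j$, the two incidences are the two ends of the edge $i$, and it suffices to produce a circle of $\Gamma$ through $i$, equivalently a circle of $G$ through the incidence $i$; this comes from applying inseparability of $G$ to $i$ and any other incidence of $G$ (there is one, unless $G$ is among the trivial cases discussed below). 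For the converse, assume $\Gamma$ is inseparable and let $i,j$ be incidences of $G$. Choose an incidence of $\Gamma$ on the edge $i$ and one on the edge $j$; inseparability of $\Gamma$ puts them on a common circle $C$ of $\Gamma$, which by the fact above contains the whole edges $i$ and $j$. By Lemma~\ref{L34}, $C$ is a circle of $G$ containing the incidences $i$ and $j$, so $G$ is inseparable.

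The step requiring the most care is precisely the identification just used: an incidence of $G$ is a single edge of $\Gamma$ but carries two incidences of $\Gamma$, so the two occurrences of ``inseparable'' quantify over pairs of different kinds of objects, and the equivalence of ``every pair of incidences of $\Gamma$ lies on a circle of $\Gamma$'' with ``every pair of incidences of $G$ lies on a circle of $G$'' only closes once the $i=j$ case has been dealt with as above. This also pins down the genuinely degenerate configurations --- those in which $G$ has at most one incidence (an isolated vertex, a single $0$-edge, or a lone monovalent vertex on a $1$-edge) --- which are checked directly and are consistent with reading inseparability of an oriented hypergraph as the requirement that every incidence, and every pair of incidences, lie on a common circle. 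In the applications that follow, the relevant hypergraphs are circle-covered (cf.\ Lemma~\ref{L22}), so these boundary cases never arise.
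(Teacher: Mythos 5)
Your proof is correct and follows essentially the same route as the paper's: both hinge on the identification of the edges of $\Gamma$ with the incidences of $G$ together with the circle correspondence of Lemma~\ref{L34}. The only difference is that the paper reads inseparability of $\Gamma$ in its graphic form (every pair of \emph{edges} of $\Gamma$ lies on a common circle), which makes the translation immediate, whereas you read it at the incidence level of $\Gamma$ and therefore add the $i=j$ and single-incidence bookkeeping --- harmless extra care, but not needed for the paper's one-line argument.
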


\begin{proof}
$G$ is inseparable if every pair of incidences is contained in a circle,
while $\Gamma $ is inseparable if every pair of edges is contained in a
circle. \ The edges of $\Gamma $ are the incidences of $G$.
\end{proof}

\begin{lemma}
\label{L38}Let $G$ be an oriented hypergraph with incidence graph $\Gamma $.
\ $G$ is simple if, and only if, $\Gamma $ is simple.
\end{lemma}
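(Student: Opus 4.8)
The plan is to unwind both notions of simplicity and observe that they record the same data. Recall that $\Gamma_G$ is the bipartite graph with parts $V_G$ and $E_G$ and edge set $\mathcal{I}_G$, where an incidence $(v,e,k)$ is recorded as the $k$-th edge joining $v$ to $e$. First I would note that, since $\Gamma_G$ is bipartite with the hypergraph vertices on one side and the hypergraph edges on the other, $\Gamma_G$ has no loops; hence $\Gamma_G$ is simple precisely when it has no pair of parallel edges. This is the one point that deserves an explicit word, though it is immediate from bipartiteness.

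Next I would translate the parallel-edge condition back to the hypergraph. Two distinct edges of $\Gamma_G$ are parallel if and only if they share both endpoints, i.e. they are incidences $(v,e,k_1)$ and $(v,e,k_2)$ with the same $v$, the same $e$, and $k_1 \neq k_2$. Such a pair exists if and only if $\iota(v,e) \geq 2$ for some $v$ and $e$, which is exactly the negation of the defining condition $\iota(v,e) \leq 1$ for simplicity of $G$.

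Combining the two observations gives the chain: $\Gamma_G$ is simple $\iff$ $\Gamma_G$ has no parallel edges $\iff$ $\iota(v,e) \leq 1$ for all $v$ and $e$ $\iff$ $G$ is simple. This is the formal version of the remark already made when the incidence graph was introduced, so there is no genuine obstacle; the proof is just careful bookkeeping of the two definitions, with the only mild subtlety being the loop-freeness of $\Gamma_G$ noted above.
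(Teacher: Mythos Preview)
Your proof is correct and follows essentially the same approach as the paper: both note that bipartiteness of $\Gamma$ rules out loops, so simplicity of $\Gamma$ reduces to the absence of parallel edges, which corresponds exactly to the absence of multiple incidences in $G$. Your write-up is more detailed, but the argument is the same.
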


\begin{proof}
$G$ is simple if there are no multiple incidences, while $\Gamma $ is simple
there are no parallel edges, as loops do not occur in bipartite graphs. \
The edges of $\Gamma $ are the incidences of $G$.
\end{proof}

\subsection{The Cyclomatic Number for Oriented Hypergraphs}

Since circles of an oriented hypergraph are in one-to-one correspondence
with the circles in its incidence graph the graphic cyclomatic number of
the incidence graph can be regarded as the cyclomatic number for the 
oriented hypergraph. The graphic cyclomatic number $\varphi $ is equal to
the number of edges that lie outside a maximal forest of a graph $\Gamma $ 
and is given by the following equation.
\begin{equation*}
\varphi _{\Gamma }=\left\vert E_{\Gamma }\right\vert -\left\vert V_{\Gamma
}\right\vert +c\text{,}
\end{equation*}
where $c$ is the number of connected components of $\Gamma $.

Using the graphic cyclomatic number for the bipartite incidence graph of an
oriented hypergraph, where $V_{\Gamma }$ consists of the vertices and edges
of $G$ and $E_{\Gamma }$ consists of the incidences of $G$, we define the 
\emph{cyclomatic number of an oriented hypergraph }$G$ as%
\begin{equation*}
\varphi _{G}:=\left\vert \mathcal{I}_{G}\right\vert -(\left\vert
V_{G}\right\vert +\left\vert E_{G}\right\vert )+c\text{,}
\end{equation*}%
where $c$ is the number of connected components of $G$.

We also have the following alternate, incidence dual, ways of calculating
the hypergraphic cyclomatic number which are consistent with Berge's
formulation of the cyclomatic number in \cite{Berge2}:%
\begin{equation*}
\varphi _{G}=\sum\limits_{e\in E_{G}}\left\vert e\right\vert -(\left\vert
V_{G}\right\vert +\left\vert E_{G}\right\vert )+c\text{,}
\end{equation*}%
or%
\begin{equation*}
\varphi _{G}=\sum\limits_{v\in V_{G}}\left[ \deg (v)\right] -(\left\vert
V_{G}\right\vert +\left\vert E_{G}\right\vert )+c\text{.}
\end{equation*}

\begin{lemma}
\label{L39}The hypergraphic cyclomatic number of a graph is equal to its
graphic cyclomatic number.
\end{lemma}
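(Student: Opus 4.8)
The plan is to unwind the definitions and observe that the hypergraphic cyclomatic number of a graph, when that graph is viewed as an oriented hypergraph whose edges all have size~$2$, reduces algebraically to the ordinary graphic cyclomatic number. Recall that the hypergraphic cyclomatic number is defined via the incidence graph $\Gamma_G$ as $\varphi_G = |\mathcal{I}_G| - (|V_G| + |E_G|) + c$, where $c$ is the number of connected components of $G$, and that there is the equivalent form $\varphi_G = \sum_{e \in E_G} |e| - (|V_G| + |E_G|) + c$. I would work from this edge-sum form since it is the most convenient for a graph.

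First I would note that if $G$ is a graph, then every edge $e \in E_G$ is a $2$-edge, so $|e| = 2$ for each $e$, and hence $\sum_{e \in E_G} |e| = 2|E_G|$. Substituting this into the edge-sum formula gives
\begin{equation*}
\varphi_G = 2|E_G| - (|V_G| + |E_G|) + c = |E_G| - |V_G| + c.
\end{equation*}
This is exactly the classical graphic cyclomatic number $|E| - |V| + c$ of $G$. Second, I would remark that the number of connected components $c$ is the same whether $G$ is regarded as a graph or as an oriented hypergraph, since the notion of a path (and hence of connectedness and of a connected component) on the underlying hypergraph restricts to the usual graph-theoretic notion when all edges have size~$2$; alternatively, $c$ equals the number of connected components of $\Gamma_G$ minus the number of isolated hypergraph-edge vertices, but for a graph with no loose or half edges the bipartite incidence graph $\Gamma_G$ and $G$ have the same component count. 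That completes the identification.

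There is really no substantial obstacle here: the statement is essentially a bookkeeping identity, and the only thing to be careful about is confirming that the two quantities being equated use the same value of $c$ and that ``graph'' in the statement means a graph with all edges of size exactly~$2$ (no loose or half edges), so that $|e| = 2$ uniformly. If one wanted to be maximally careful, one could instead argue directly from the incidence-graph definition: for a graph $G$, the incidence graph $\Gamma_G$ is the subdivision of $G$ obtained by placing one new vertex on each edge, which has $|V_G| + |E_G|$ vertices and $2|E_G|$ edges and the same number of components as $G$, so its graphic cyclomatic number $|E_\Gamma| - |V_\Gamma| + c = 2|E_G| - |V_G| - |E_G| + c = |E_G| - |V_G| + c$, again the graphic cyclomatic number of $G$. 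Either route is a one-line computation, so the lemma follows immediately.
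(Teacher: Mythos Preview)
Your argument is correct and is essentially the same as the paper's: the paper observes that in a graph every edge has size $2$, so $|\mathcal{I}| = 2|E|$, and substitutes this into the hypergraphic cyclomatic number to obtain $|E| - |V| + c$. Your extra remarks about $c$ and the incidence-graph route are sound but unnecessary embellishments of the same one-line computation.
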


\begin{proof}
In a graph $\Gamma $ every edge has size $2$ so $\left\vert \mathcal{I}%
\right\vert =2\left\vert E\right\vert $. \ Replacing $\left\vert \mathcal{I}%
\right\vert $ in the hypergraphic cyclomatic number with $2\left\vert
E\right\vert $ yields the result.
\end{proof}

Since the two cyclomatic numbers agree on graphs we will refer to a single
cyclomatic number, the oriented hypergraphic version, and translate existing
results for the graphic cyclomatic number to the hypergraphic cyclomatic
number.

An alternate way to interpret the graphic cyclomatic number is that it is
the minimal number of circles that must be \textquotedblleft
broken\textquotedblright\ in order to be left with an acyclic graph.  While this is
normally accomplished by deleting edges from the graph, in the incidence graph 
the edges are incidences of the corresponding oriented hypergraph.  \emph{Breaking} in a hypergraph is the operation defined by deleting a single element from the incidence set $\mathcal{I}$, while \emph{breaking a circle} is the deletion of a single incidence of that circle.

\begin{corollary}
\label{L40}If $G$ is a hypergraph, then the cyclomatic number $\varphi _{G}$
is the minimal number of circles of $G$ that need to be broken to yield an
acyclic hypergraph.
\end{corollary}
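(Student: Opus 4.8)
The plan is to reduce the hypergraphic statement to the corresponding graphic fact about the incidence graph $\Gamma$, exactly as the surrounding development suggests. By Lemma~\ref{L34} the circles of $G$ correspond bijectively to the circles of $\Gamma$, and by the definition of breaking a circle (deleting one incidence of it) together with the fact that $E_\Gamma = \mathcal{I}_G$, the operation of breaking a circle in $G$ is literally the operation of deleting one edge of the corresponding circle in $\Gamma$. So the corollary will follow immediately once we establish the purely graphic claim: for a graph $\Gamma$, the minimum number of edges whose deletion leaves an acyclic graph equals $\varphi_\Gamma = |E_\Gamma| - |V_\Gamma| + c$, and moreover such a minimum-size set of edges can be chosen so that each deleted edge lies on a circle of $\Gamma$ (so that each deletion genuinely ``breaks a circle'').

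First I would recall that a maximal spanning forest $T$ of $\Gamma$ has exactly $|V_\Gamma| - c$ edges, hence the complement $E_\Gamma \setminus T$ has exactly $\varphi_\Gamma$ edges. Deleting all of $E_\Gamma \setminus T$ leaves the forest $T$, which is acyclic, so $\varphi_\Gamma$ edge-deletions suffice; and every edge of $E_\Gamma \setminus T$ closes a circle with $T$, so each such deletion breaks a circle. Next I would argue the lower bound: if $S \subseteq E_\Gamma$ and $\Gamma \smallsetminus S$ is acyclic, then $\Gamma \smallsetminus S$ is a forest on the same vertex set, so it has at most $|V_\Gamma| - c'$ edges where $c' \geq c$ is its number of components; since deleting edges cannot decrease the number of components we actually have $|E_\Gamma| - |S| = |E(\Gamma \smallsetminus S)| \leq |V_\Gamma| - c$, giving $|S| \geq \varphi_\Gamma$. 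Combining the two bounds gives that the minimum is exactly $\varphi_\Gamma$.

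Finally I would translate back through the incidence graph. Apply the graphic statement to $\Gamma = \Gamma_G$, using $|V_\Gamma| = |V_G| + |E_G|$, $|E_\Gamma| = |\mathcal{I}_G|$, and the equality of component counts, so that $\varphi_\Gamma = \varphi_G$ by the very definition of the hypergraphic cyclomatic number. A set of edges of $\Gamma_G$ realizing the minimum, chosen (as above) to consist of non-forest edges each lying on a circle, pulls back to a set of $\varphi_G$ incidences of $G$ each lying on a circle of $G$, whose deletion — i.e., whose ``breaking'' — leaves a hypergraph with acyclic incidence graph, hence (again by Lemma~\ref{L34}) an acyclic hypergraph; and no smaller set of circle-breakings can work, since that would give a smaller set of edge-deletions making $\Gamma_G$ acyclic.

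The only mildly delicate point, and the one I would be most careful about, is the claim that the optimal deleting set can be taken to break circles rather than merely to destroy all circles: a minimum edge set making a graph acyclic could a priori include a bridge, but a bridge lies on no circle and its deletion only disconnects without reducing the cyclomatic count, so it is never needed — this is exactly what the ``non-forest edges of a spanning forest'' construction guarantees. Beyond that, everything is a routine bookkeeping of the forest edge count and the monotonicity of the component number under edge deletion, so no real obstacle is expected; the corollary is essentially a restatement of the classical graphic fact through the dictionary of Lemma~\ref{L34} and the definition of breaking.
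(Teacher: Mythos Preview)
Your proposal is correct and follows essentially the same approach the paper intends: the corollary is stated without proof in the paper, immediately after the remark that the graphic cyclomatic number counts the minimal number of circles to break, and the dictionary with the incidence graph (Lemma~\ref{L34}, $E_\Gamma=\mathcal{I}_G$) is exactly what makes it a corollary. Your write-up simply makes explicit the spanning-forest argument and the observation that a minimum deleting set never needs a bridge, both of which the paper leaves implicit.
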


A minimal collection of circles whose breaking leaves an acyclic hypergraph
is called a collection of \emph{essential circles}. The concept of an
essential circle of a hypergraph is similar to that of a fundamental circle
of a graph. A fundamental circle arises from the graphical property that a
unique circle is created when introducing an edge outside of a spanning
forest, while an essential circle is a hypergraphic property where a unique
circle is created when introducing an incidence outside a hypergraph
corresponding to a spanning forest in the incidence graph. We adopt the
term \textquotedblleft essential circle\textquotedblright\ as the incidence-centric 
oriented hypergraphic concept and reserve the word \textquotedblleft
fundamental\textquotedblright\ as an edge-centric concept. Specifically,
the choice of terminology is motivated as to not create confusion with the
matroid theoretic concept of a fundamental circuit.

\begin{corollary}
\label{L41}If $G$ is a hypergraph, then $\varphi _{G}$ is equal to the size
of any collection of essential circles in $G$.
\end{corollary}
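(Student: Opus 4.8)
The plan is to prove Corollary \ref{L41} as a direct consequence of Corollary \ref{L40} together with the definition of an essential collection. Recall that $\varphi_{G}$ was defined via the cyclomatic number of the incidence graph $\Gamma_{G}$, so I would work throughout in $\Gamma_{G}$, using Lemma \ref{L34} to pass freely between circles of $G$ and circles of $\Gamma$, and using the observation that breaking a circle of $G$ (deleting one incidence) corresponds exactly to deleting one edge of $\Gamma$.

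First I would fix a spanning forest $T$ of $\Gamma_{G}$; by the cyclomatic number formula there are exactly $\varphi_{G} = |E_{\Gamma}| - |V_{\Gamma}| + c$ edges of $\Gamma$ outside $T$. For each such non-forest edge $i$ (an incidence of $G$), adding $i$ to $T$ creates a unique circle $C_{i}$ in $\Gamma$, the fundamental circle of $i$ with respect to $T$; by Lemma \ref{L34} this is a circle of $G$, and by the usual fundamental-circle argument $C_{i}$ contains $i$ but no other non-forest edge. I claim $\{C_{i}\}$ is a collection of essential circles: breaking each $C_{i}$ by deleting the incidence $i$ removes precisely the $\varphi_{G}$ non-forest edges, leaving exactly $T$, which is acyclic; and it is minimal by Corollary \ref{L40} since $\varphi_{G}$ is the minimum number of circles whose breaking yields an acyclic hypergraph. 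This exhibits a collection of essential circles of size exactly $\varphi_{G}$.

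For the reverse direction I would argue that \emph{every} minimal collection of circles whose breaking leaves $G$ acyclic has size exactly $\varphi_{G}$. The lower bound is immediate from Corollary \ref{L40}: no such collection can have fewer than $\varphi_{G}$ circles. For the matching upper bound within a minimal collection $\mathcal{C} = \{C_{1}, \dots, C_{r}\}$, I would pick for each $C_{j}$ the incidence $i_{j}$ whose deletion breaks $C_{j}$; after deleting $i_{1}, \dots, i_{r}$ from $\mathcal{I}$ the resulting hypergraph is acyclic, so in $\Gamma$ the edge set $E_{\Gamma} \setminus \{i_{1}, \dots, i_{r}\}$ is a forest, whence $|E_{\Gamma}| - r \le |V_{\Gamma}| - c$, i.e. $r \ge \varphi_{G}$ — wait, that is again only the lower bound; the point is rather that if $r > \varphi_{G}$ then some $i_{j}$ is redundant (the set minus $i_{j}$ is still acyclic), contradicting minimality of $\mathcal{C}$ since then $\mathcal{C} \setminus \{C_{j}\}$ would already suffice. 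Hence $r = \varphi_{G}$.

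The main obstacle I anticipate is the bookkeeping in that last step: showing that minimality of the \emph{collection of circles} forces minimality of the associated \emph{set of broken incidences}, and ruling out the pathological possibility that distinct circles in $\mathcal{C}$ must be broken at the same shared incidence (in which case one deletion breaks two circles at once and the count could drop). One clean way around this is to avoid choosing the incidences at all and instead argue purely at the level of the matroid / rank function of $\Gamma$: breaking a circle lowers the cyclomatic number of $\Gamma$ by at most $1$, so breaking $r$ circles lowers it by at most $r$, and to reach $0$ we need $r \ge \varphi_{G}$, while the fundamental-circle construction above shows $r = \varphi_{G}$ is achievable and that any collection achieving acyclicity with more than $\varphi_{G}$ circles contains a proper sub-collection that already works, contradicting minimality. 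I would phrase the proof this way to keep it short and to lean entirely on Corollaries \ref{L40} and \ref{L36}/Lemma \ref{L34} rather than re-deriving forest facts by hand.
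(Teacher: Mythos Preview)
The paper offers no proof of Corollary~\ref{L41}; it is stated bare, the intended reading being that the definition of an essential collection together with Corollary~\ref{L40} already forces the size to be $\varphi_G$. Your write-up is therefore not an alternative route so much as an honest attempt to justify something the paper leaves implicit, and your instinct to worry about whether ``minimal'' means inclusion-minimal (in which case a real argument is needed) is more scrupulous than the paper itself.

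Your plan is correct, but the step you yourself flag is the one loose end, and the sketch you give does not quite close it: the observation that breaking one circle drops $\varphi$ by at most $1$ only reproves the lower bound $r\ge\varphi_G$; it does not by itself show that a collection with $r>\varphi_G$ has a proper subcollection that also works. The clean finish is the one you almost wrote earlier and then abandoned. If $\mathcal{C}=\{C_1,\dots,C_r\}$ is broken via incidences $i_1,\dots,i_r$ and the result is acyclic in $\Gamma$, then either two of the $i_j$ coincide (in which case drop one of the two corresponding circles and the same set $S$ of deleted incidences still witnesses acyclicity), or the $r>\varphi_G$ distinct deletions leave a forest in $\Gamma$ with strictly fewer than $|V_\Gamma|-c$ edges; extending that forest to a spanning forest of $\Gamma$ must reinstate some $i_j$, so $(\Gamma\setminus S)\cup\{i_j\}$ is still acyclic and $\mathcal{C}\setminus\{C_j\}$ already suffices with the same breaking choices. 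With that sentence your argument is complete. (The reference to Corollary~\ref{L36} at the end appears to be a slip; purity of circles plays no role here.)
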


\begin{lemma}
\label{L42}If $G$ is an oriented hypergraph and $H$ a subdivision of $G$,
then $\varphi _{G}=\varphi _{H}$.
\end{lemma}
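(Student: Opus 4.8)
The plan is to reduce the statement to the single-step case and then invoke the incidence-graph interpretation of the cyclomatic number. Since any subdivision $H$ of $G$ is obtained by a finite sequence of edge-subdivisions, and the relation $\varphi_G = \varphi_H$ is transitive, it suffices to prove the lemma when $H$ is obtained from $G$ by a single edge-subdivision. So fix an edge $e$ of $G$ that is subdivided into edges $e_1$ and $e_2$, introducing one new degree-$2$ vertex $u$ between them.

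First I would compute the effect of this single subdivision on each of the three quantities appearing in the cyclomatic formula $\varphi_G = |\mathcal{I}_G| - (|V_G| + |E_G|) + c$. Edge-subdivision replaces the edge $e$ by the two edges $e_1,e_2$, so $|E|$ increases by $1$; it introduces exactly one new vertex $u$, so $|V|$ increases by $1$; and it bipartitions the incidences of $e$ and adds the two new incidences $(u,e_1,\cdot)$ and $(u,e_2,\cdot)$ at $u$, so $|\mathcal{I}|$ increases by $2$ (each old incidence of $e$ becomes an incidence of $e_1$ or of $e_2$, contributing no net change, and the two incidences at $u$ are new). Thus $|\mathcal{I}|$, $|V|$, and $|E|$ each change so that $|\mathcal{I}| - |V| - |E|$ is unchanged. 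The remaining point is that $c$, the number of connected components, is unchanged: subdividing an edge keeps its end-region connected through $u$, so no component is split or merged. Combining these, $\varphi_H = \varphi_G$.

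Alternatively — and this is the cleaner route — I would argue through the incidence graph. By Lemma~\ref{L34} the circles of $G$ correspond bijectively to those of $\Gamma_G$, and the hypergraphic cyclomatic number of $G$ is by definition the graphic cyclomatic number of $\Gamma_G$. Edge-subdividing $e$ into $e_1,e_2$ at a new vertex $u$ has the following effect on $\Gamma_G$: the vertex $e$ of $\Gamma_G$ is replaced by two vertices $e_1,e_2$, a new vertex $u$ is added adjacent to both $e_1$ and $e_2$ (via the two new incidence-edges), and each incidence-edge of $\Gamma_G$ formerly at $e$ is reattached to $e_1$ or to $e_2$. Graph-theoretically this is precisely a sequence of elementary graph subdivisions together with the splitting of a vertex along a partition of its incident edges joined back at $u$ — operations well known to preserve the graphic cyclomatic number, since each simultaneously adds one vertex and one edge to $\Gamma$ without changing the number of components. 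Hence $\varphi_{\Gamma_G} = \varphi_{\Gamma_H}$, which gives $\varphi_G = \varphi_H$.

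The only genuine thing to check carefully — the potential obstacle — is the bookkeeping in the second-last paragraph: one must be sure that the bipartition of the incidences of $e$ contributes exactly a net $+2$ to $|\mathcal{I}|$ (not more), i.e.\ that old incidences are relabelled rather than duplicated, and that exactly two new incidences at $u$ are created regardless of how the bipartition is chosen. This follows directly from the definition of edge-subdivision as the inverse of $2$-vertex-contraction, but it is worth stating explicitly. Once that count is pinned down, the invariance of $c$ is immediate and the rest is the arithmetic above; the multi-step case then follows by induction on the number of subdivisions.
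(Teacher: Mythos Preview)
Your proof is correct and follows essentially the same approach as the paper: count the net change in $|\mathcal{I}|$, $|V|$, $|E|$, and $c$ under a single subdivision and observe that $\varphi$ is unchanged. The paper's version is a one-sentence statement of this count; your reduction to a single step, explicit bookkeeping on incidences, and alternative incidence-graph argument are sound elaborations but not a different method.
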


\begin{proof}
Subdivision cannot create any new connected components and creates exactly
one new edge, one new vertex, and two new incidences, producing a net change
of $0$ in the cyclomatic number.
\end{proof}

Subdivision may create new circles but does not destroy existing circles, so subdividing may only create new collections of essential circles.

\begin{corollary}
\label{L43}Any collection of essential circles of\ an oriented hypergraph $G$
corresponds to a collection of essential circles in a subdivision $H$ of $G$.
\end{corollary}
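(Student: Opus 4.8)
The plan is to unwind the definitions on both sides and produce an explicit correspondence between an essential collection of $G$ and one of $H$, using Lemma \ref{L42} to guarantee the sizes match. First I would recall that by Corollary \ref{L41} a collection of essential circles of $G$ has size $\varphi_G$, and by Lemma \ref{L42} we have $\varphi_G = \varphi_H$, so any collection of circles of $H$ whose breaking leaves $H$ acyclic and which has size $\varphi_H$ is automatically essential (minimality is forced by the count). Thus it suffices to take an essential collection $\mathcal{C}$ of $G$ and exhibit a collection $\mathcal{C}'$ of $H$ of the same size whose breaking yields an acyclic hypergraph.

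The key step is the circle-lifting construction. Since $H$ is a subdivision of $G$, it is obtained by a sequence of edge-subdivisions; by induction it is enough to treat a single subdivision, where one edge $e$ of $G$ is split into $e_1$ and $e_2$ joined at a new divalent vertex $u$. For a circle $C$ of $G$: if $C$ does not use $e$, it persists verbatim as a circle of $H$; if $C$ uses $e$ via incidences at vertices $v$ and $w$ that now lie on opposite sides of the bipartition, then $C$ lifts to the circle $C'$ obtained by replacing the sub-path $v, (v,e), e, (w,e), w$ with $v, (v,e_1), e_1, (u,e_1), u, (u,e_2), e_2, (w,e_2), w$; if $v$ and $w$ fall on the same side of the bipartition, $C$ still lifts (the incidences at $e$ simply become incidences at $e_1$ or $e_2$). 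I would then pick, for the distinguished incidence of $C$ that is broken, a corresponding incidence of $C'$ to break: if the broken incidence of $C$ was not one of the two at $e$, keep it; otherwise break one of the new incidences $(v,e_1)$ or $(w,e_2)$ on $C'$. Define $\mathcal{C}'$ to be the collection of these lifts with these chosen broken incidences.

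It then remains to check that breaking all the incidences in $\mathcal{C}'$ leaves $H$ acyclic. Suppose not: then $H$ minus those incidences contains a circle $D$. Contracting the divalent vertex $u$ (which, if it still has degree $2$ after the breaking, recombines $e_1, e_2$ into $e$) sends $D$ to a closed incidence-walk in $G$ minus the broken incidences of $\mathcal{C}$; the only subtlety is that breaking may have lowered $\deg(u)$, in which case $D$ cannot pass through $u$ at all and maps directly to a circle of $G$ avoiding the broken incidences — contradicting that $\mathcal{C}$ was essential for $G$. The main obstacle is precisely this bookkeeping: ensuring that the chosen broken incidence on each lift $C'$ is consistent across the whole collection (so that no circle of $H$ sneaks through the new vertex $u$ after all breakings are performed), and handling the case where two circles of $\mathcal{C}$ both used the edge $e$ and lift through the same subdivision. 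I expect the cleanest route is to observe that after breaking, the incidence structure near $u$ in $H$ mirrors the incidence structure near $e$ in $G$ after its breakings, so purity and acyclicity transfer by the same symmetry argument used in Lemma \ref{L42} and Corollary \ref{L17}; the counting via $\varphi_G = \varphi_H$ then upgrades "acyclic after breaking" to "essential."
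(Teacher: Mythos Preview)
Your approach is correct and aligns with the paper's reasoning: the paper states this as an immediate corollary of Lemma \ref{L42} together with the one-sentence observation preceding it that subdivision does not destroy existing circles, so your explicit circle-lifting and contraction argument for acyclicity simply spell out what the paper leaves implicit. Your worry about consistency of the chosen broken incidences is unnecessary once you observe that $H$ minus the (lifted) broken incidences is itself a subdivision of $G$ minus the original broken incidences, hence acyclic by the same incidence-graph reasoning behind Lemma \ref{L42}; the count $\varphi_G=\varphi_H$ then forces minimality exactly as you say.
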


\subsection{Theta Graphs}

We now examine a configuration in oriented hypergraphs with specific signed
circle properties. A \emph{theta graph} is a set of three internally disjoint
paths whose end-points coincide. A \emph{vertex-theta-graph} is a theta graph
whose end-points are vertices, an \emph{edge-theta-graph} is a theta graph
whose end-points are edges, and a \emph{cross-theta-graph} is a theta graph
whose end-points consist of one vertex and one edge.

\begin{figure}[h!]
\centering
\includegraphics{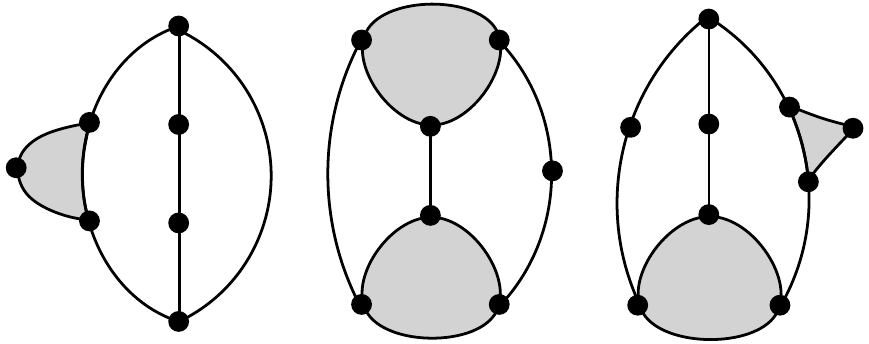}
\caption{Hypergraphs that contain a vertex-theta, edge-theta, and
cross-theta, respectively.}
\label{fig:thetas}
\end{figure}

The paths of a theta graph form three internally disjoint paths in the
incidence graph. The paths of a vertex-theta begin and end on the vertex
side of the incidence graph, the paths of an edge-theta begin and end on the
edge side of the incidence graph, and the paths of a cross-theta have one
end-point on each side of the incidence graph. Since the incidence graph is
bipartite the paths in vertex-thetas and edge-thetas must have even length
in the incidence graph, while the paths of a cross-theta must have odd
length in the incidence graph.

\begin{lemma}
\label{L44}If an oriented hypergraph contains an incidence of multiplicity $%
k\geq 3$, then it contains a cross-theta.
\end{lemma}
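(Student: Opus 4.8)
The plan is to unpack the definitions of incidence multiplicity and cross-theta-graph, and build the three required internally disjoint paths directly from the multiple incidences. Recall that an incidence of multiplicity $k$ at $(v,e)$ means there are distinct incidences $(v,e,1),(v,e,2),\dots,(v,e,k)$ all containing the same vertex $v$ and the same edge $e$. A cross-theta-graph consists of three internally disjoint paths sharing the same two end-points, one of which is a vertex and one of which is an edge; here the natural candidates for the end-points are $v$ and $e$ themselves, since a single incidence $(v,e,j)$ already constitutes a cross-path of length $1$ from $v$ to $e$.

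First I would observe that the sequence $v,\,(v,e,j),\,e$ is a legitimate path from the vertex $v$ to the edge $e$ for each $j\in\{1,2,3\}$ (using $k\geq 3$ to guarantee three of them exist). Second, I would check that these three paths are internally disjoint: each has no internal vertices or edges at all — the only elements are the two end-points $v$ and $e$ together with a single incidence — and the three incidences $(v,e,1)$, $(v,e,2)$, $(v,e,3)$ are pairwise distinct by the definition of an incidence as a triple $(v,e,k)$ with distinct $k$-values. Third, since one end-point is a vertex and the other is an edge, this configuration is by definition a cross-theta-graph, which is then a subhypergraph of $G$ in the incidence-centric sense permitted by the paper's relaxed notion of subhypergraph. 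This yields the claim.

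The step requiring the most care is verifying that three length-$1$ cross-paths genuinely satisfy the formal definition of a theta graph, namely that they are \emph{internally} disjoint paths with coinciding end-points — one must confirm that "internally disjoint" places no constraint when the paths have empty interior, and that the definition of theta graph does not implicitly require the paths to have positive length or the configuration to be simple. Given the paper's explicit remark that the incidence graph $\Gamma_G$ has parallel edges precisely when $G$ is non-simple, the three multiple incidences correspond to three parallel edges joining $v$ and $e$ in $\Gamma_G$, and three parallel edges in a graph are the smallest theta graph; invoking Lemma~\ref{L34}'s style of translation between $G$ and $\Gamma_G$ makes the argument cleanest. So the only genuine obstacle is a definitional one, and it is mild; no computation is needed.
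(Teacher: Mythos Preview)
Your proposal is correct and takes essentially the same approach as the paper: the paper's proof simply observes that an incidence of multiplicity $k\geq 3$ corresponds to $k$ parallel edges in the incidence graph, any three of which form a cross-theta in the oriented hypergraph. Your argument is a more detailed version of the same idea, and your final paragraph invoking the parallel-edge picture in $\Gamma_G$ is precisely the paper's one-line proof.
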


\begin{proof}
Any incidence with multiplicity $k\geq 3$ in an oriented hypergraph
corresponds to $k$ parallel edges in the incidence graph. \ Any three of
them correspond to a cross-theta in the oriented hypergraph.
\end{proof}

\begin{lemma}
\label{L45}If an oriented hypergraph contains a degenerate circle, then it
contains a cross-theta.
\end{lemma}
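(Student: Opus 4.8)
The plan is to work in the incidence graph $\Gamma$, where a degenerate circle $C$ of $G$ becomes a chord-carrying circle by Lemma~\ref{L35}. So suppose $C$ is a degenerate circle; then in $\Gamma$ there is a graphic circle $C$ together with an edge $f$ (a chord) whose two end-points $x,y$ both lie on $C$ but $f\notin C$. The end-points $x$ and $y$ split the circle $C$ into two internally disjoint $xy$-paths $P_1$ and $P_2$ in $\Gamma$. Together with the single-edge path $P_3=x,f,y$, the three paths $P_1,P_2,P_3$ are internally disjoint and share the common end-points $x,y$, hence they form a theta graph in $\Gamma$. By Lemma~\ref{L34} (and the discussion preceding it) paths and circles in $\Gamma$ translate back to paths and circles in $G$, so this theta configuration is a theta graph in the oriented hypergraph $G$.

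It remains to identify which \emph{kind} of theta graph arises, and this is the one place requiring a small case check rather than a routine translation. Since $\Gamma$ is bipartite with parts $V_G$ and $E_G$, and the chord $f$ is an incidence of $G$, its two end-points $x,y$ lie on opposite sides: one is a vertex of $G$ and the other is an edge of $G$. Therefore the common end-points of the theta graph consist of exactly one vertex and one edge of $G$, which is by definition a cross-theta-graph. (Consistently, the two long paths $P_1,P_2$ each have odd length in $\Gamma$, matching the parity remark for cross-thetas in the paragraph preceding Lemma~\ref{L44}, while the chord path $P_3$ has length~$1$.) This completes the argument.

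The main obstacle — really the only subtlety — is making sure the three paths are genuinely internally disjoint and that $f$ is not already one of the edges of $C$; both follow immediately from the definition of a chord (an edge \emph{not} in $C$ whose end-points are in $C$) and from the fact that a circle in $\Gamma$ repeats no vertex except the closing one, so the two arcs $P_1,P_2$ meet only at $x$ and $y$. One should also note the degenerate possibility that $x$ and $y$ are adjacent along $C$, i.e. one of $P_1,P_2$ has length~$1$; this still yields a legitimate theta graph (three internally disjoint $xy$-paths), and in fact one may alternatively appeal to Lemma~\ref{L44} if the chord happens to create a multiple incidence. Aside from this bookkeeping, the proof is a direct transcription through the incidence-graph correspondence.
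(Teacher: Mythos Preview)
Your argument is correct and is precisely the intended one: translate the degenerate circle to a chorded circle in $\Gamma$ via Lemma~\ref{L35}, split the circle at the chord's end-points into two arcs, and observe that bipartiteness of $\Gamma$ forces the chord to join a $V_G$-vertex to an $E_G$-vertex, so the resulting theta is a cross-theta. The paper in fact states Lemma~\ref{L45} without proof, treating it as immediate from Lemma~\ref{L35} and the bipartite structure; your write-up simply makes that implication explicit.

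One small quibble: your closing remark that ``one may alternatively appeal to Lemma~\ref{L44} if the chord happens to create a multiple incidence'' is not quite right as stated. If the chord $f$ is parallel in $\Gamma$ to a single edge of $C$, you only get an incidence of multiplicity $2$, whereas Lemma~\ref{L44} requires multiplicity $\geq 3$. This does not matter, since your main three-path argument already covers that case (the short arc of length $1$, the long arc, and the chord still form a cross-theta), but the pointer to Lemma~\ref{L44} should be dropped or qualified.
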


\begin{corollary}
\label{L46}Every circle in a cross-theta-free oriented hypergraph is pure.
\end{corollary}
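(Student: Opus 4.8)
The plan is to obtain Corollary~\ref{L46} as the contrapositive of Lemma~\ref{L45}, using the fact, built into the definitions, that a circle is \emph{pure} exactly when it is not \emph{degenerate}. So let $G$ be a cross-theta-free oriented hypergraph and let $C$ be an arbitrary circle of $G$. If $C$ were degenerate, then Lemma~\ref{L45} would produce a cross-theta inside $G$, contradicting the hypothesis; hence $C$ is not degenerate, i.e.\ $C$ is pure. As $C$ was arbitrary, every circle of $G$ is pure, which is the claim.

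Since the substance of the statement really lives in Lemma~\ref{L45}, I would also want the short incidence-graph argument on record. Passing to the incidence graph $\Gamma$, Lemma~\ref{L34} says $C$ is still a circle in $\Gamma$, and Lemma~\ref{L35} (equivalently Corollary~\ref{L36}) says that $C$ being degenerate is the same as $C$ having a chord $i$ in $\Gamma$. The two endpoints $a$ and $b$ of such a chord lie on $C$ and split it into two internally disjoint $a$--$b$ paths; adjoining the single edge $i$ gives a third, so $\Gamma$ contains a theta subgraph on $a$ and $b$. But $i$ is an edge of $\Gamma$, hence an incidence of $G$, so one of $a,b$ is a vertex of $G$ and the other is an edge of $G$; the theta subgraph is therefore a cross-theta, and $G$ contains a cross-theta.

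The argument is almost purely definitional, so there is no serious obstacle; the only point requiring a moment's care is checking genuine internal disjointness of the three paths and that the chord $i$ is not already an edge used by $C$ — both immediate from the definition of a chord (an edge not in $C$ with endpoints in $C$) together with the fact that a circle in the bipartite graph $\Gamma$ repeats no vertex. Thus, once Lemma~\ref{L45} is available, the corollary is a two-line contrapositive.
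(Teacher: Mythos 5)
Your proof is correct and follows the paper's (implicit) route: Corollary~\ref{L46} is just the contrapositive of Lemma~\ref{L45} together with the definition that pure means not degenerate, which is exactly why the paper states it without proof. Your supplementary incidence-graph argument for Lemma~\ref{L45} (chord via Lemma~\ref{L35}, then the two arcs of the circle plus the chord forming three internally disjoint paths between a vertex and an edge of $G$) is also sound and consistent with the paper's framework, which permits a cross-path of length one and imposes no non-adjacency condition on theta graphs.
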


Oriented hypergraphs that contain cross-thetas provide the most
significant obstacle in the classification of the minimal dependencies of an
oriented hypergraph. Moreover, cross-thetas persist structurally under
incidence duality and subdivision.

\begin{lemma}
If $G$ contains a cross-theta, then the incidence dual $G^{\ast }$ contains
a cross-theta.
\end{lemma}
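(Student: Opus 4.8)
The plan is to exploit the fact, already established for circles (Lemma \ref{L7}) and paths (the discussion preceding Lemma \ref{L7}), that incidence duality simply interchanges the roles of vertices and edges while leaving the incidence set and all incidence signs unchanged. A cross-theta is built entirely out of paths: three internally disjoint paths sharing their two end-points, one end-point a vertex and one an edge. So the entire configuration is specified by a set of vertices, edges, and incidences of $G$, and the crucial defining feature of a \emph{cross}-theta — as opposed to a vertex-theta or edge-theta — is precisely that the two end-points lie on opposite sides of the vertex/edge bipartition.

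First I would recall that incidence duality sends a subhypergraph of $G$ to a subhypergraph of $G^{\ast}$ by the rule $(v,e,k)\mapsto(e,v,k)$, so any cross-theta $T\subseteq G$ has an image $T^{\ast}\subseteq G^{\ast}$ using exactly the same incidences. Next I would observe that a path of $G$ dualizes to a path of $G^{\ast}$ (this is stated in the excerpt: the incidence dual of a vertex-path is an edge-path, of an edge-path a vertex-path, and of a cross-path a cross-path), and that internal disjointness is preserved since the vertex set, edge set, and incidence set of each path map bijectively to those of its dual. Then the three internally disjoint paths of $T$ become three internally disjoint paths of $T^{\ast}$ with the same shared end-points $\{v,e\}$, now regarded as $\{e,v\}$ in $G^{\ast}$. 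Since one end-point was a vertex of $G$ and the other an edge of $G$, in $G^{\ast}$ one is an edge and the other is a vertex — so the shared end-points of $T^{\ast}$ are still one vertex and one edge. Hence $T^{\ast}$ is again a cross-theta, now sitting inside $G^{\ast}$.

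The only point requiring a little care is the parity/consistency check: one must confirm that the three dualized paths really do form a theta and not something degenerate — e.g. that two of them don't collapse or coincide. But this follows immediately from the bijectivity of the duality map on incidences: distinct incidences stay distinct, a path with no repeated vertex/edge/incidence dualizes to one with the same property, and three pairwise internally disjoint paths in $G$ dualize to three pairwise internally disjoint paths in $G^{\ast}$. I expect this bijectivity bookkeeping to be the main (and quite mild) obstacle; everything else is a direct translation of definitions through the duality, entirely parallel to the proof of Lemma \ref{L7}. In fact the cleanest write-up simply notes that ``cross-theta'' is a notion invariant under the vertex/edge symmetry that defines incidence duality, exactly because a cross-theta is the unique type of theta-graph whose two end-points straddle the bipartition, and straddling is preserved when the two sides are swapped.
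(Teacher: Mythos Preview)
Your argument is correct: incidence duality is a bijection on incidences that swaps the vertex and edge labels, so the three internally disjoint cross-paths of a cross-theta dualize to three internally disjoint cross-paths with the same shared end-points, still one vertex and one edge. The paper actually states this lemma without proof, treating it as immediate from the self-duality of the notion of a cross-theta; your write-up simply spells out that symmetry argument explicitly, which is exactly the intended reasoning.
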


\begin{lemma}
\label{L47}If $G$ contains a cross-theta, then any subdivision of $G$
contains a cross-theta.
\end{lemma}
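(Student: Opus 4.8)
The plan is to reduce to a single elementary edge-subdivision and then track a cross-theta through that subdivision by a short case analysis. Any subdivision of $G$ is the result of a finite sequence of elementary edge-subdivisions, so by induction on the length of that sequence it suffices to prove: if $G$ contains a cross-theta $T$ and $H$ is obtained from $G$ by subdividing a single edge $e$ into edges $e_1,e_2$ with a new divalent vertex $u$, then $H$ contains a cross-theta. Write $T$ as three internally disjoint paths $P_1,P_2,P_3$ sharing exactly the end-points $x$ and $f$, where $x$ is a vertex and $f$ an edge. Recall from the definition of edge-subdivision that the incidences of $e$ in $G$ are partitioned into two parts $S_1,S_2$, with $e_j$ incident to the incidences in $S_j$ together with a new incidence at $u$; no sign hypothesis is needed, since being a cross-theta is purely a statement about the incidence structure, so the subdivision need not be compatible. (Equivalently, in the incidence graph subdivision is a vertex-split of the edge-side vertex $e$, and a cross-theta is a theta subgraph whose two branch vertices lie on opposite sides of the bipartition.)

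If $e\notin E_T$, then none of the vertices, edges, or incidences of $T$ is altered by the subdivision, so $T$ itself is a cross-theta of $H$; in particular this covers the case where $e$ is a $0$-edge or $1$-edge, since such an edge cannot lie in a theta. Suppose then that $e\in E_T$. Because $T$ is a theta graph and $f$ is its only edge-side branch vertex, $e$ is either the hub edge $f$, incident to exactly three incidences of $T$ (one terminating each $P_i$, and these three are distinct since $f$ is an end-point, not an internal element, of each $P_i$), or an internal edge of exactly one path $P_i$, incident to exactly two incidences of $T$, namely the two consecutive incidences $a,b$ of $P_i$ at $e$. In the internal case: if $a,b$ are assigned to the same $e_j$, relabelling $e$ as $e_j$ in $P_i$ leaves $T$ a cross-theta of $H$; if $a$ goes to $e_1$ and $b$ to $e_2$, replace the segment $\dots,a,e,b,\dots$ of $P_i$ by $\dots,a',e_1,(u,e_1),u,(u,e_2),e_2,b',\dots$, a path still internally disjoint from $P_k$ for $k\neq i$ because the inserted elements $e_1,e_2,u$ and the incidences at $u$ are new. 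In the hub case: two of the three terminal incidences at $f$, say $j_1,j_2$, land in the same part, WLOG $S_1$, so $e_1$ becomes the new common edge-end-point; if $j_3\in S_1$ too, the three paths all terminate at $e_1$ after relabelling, and if $j_3\in S_2$, extend $P_3$ past $e_2$ through $(u,e_2),u,(u,e_1),e_1$, obtaining three internally disjoint $x$-to-$e_1$ paths in $H$. In every case $H$ contains a cross-theta, completing the induction.

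None of the individual steps is deep; the two points I would take care to write out are, first, that the case split is exhaustive — an edge in a theta subgraph has at most three incidences within it, and exactly three only if it is the unique edge-side branch vertex — and second, the internal-disjointness check in the hub case with $j_1,j_2\in S_1$ and $j_3\in S_2$: the extended $P_3$ is internally disjoint from $P_1$ and $P_2$ because $e_1$ serves only as the common end-point (not an internal vertex) of the new theta, while $e_2$, $u$, and the two incidences at $u$ are freshly created and so occur in no other path. The remaining bookkeeping is immediate from the definitions of path, theta graph, and subdivision.
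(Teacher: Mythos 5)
Your argument is correct. The paper in fact states Lemma \ref{L47} without proof, treating it as an immediate structural observation (``cross-thetas persist structurally under incidence duality and subdivision''), so there is no official argument to compare against; your write-up supplies the verification the paper leaves implicit. The reduction to a single elementary edge-subdivision is the right move, and your case split is exhaustive for exactly the reason you give: an edge of the cross-theta $T$ other than the edge end-point $f$ lies internally on exactly one of the three paths and carries exactly two incidences of $T$, while $f$ carries exactly three, one terminal incidence per path, and these are distinct because the paths are internally disjoint. In each subcase the relabelled incidences really are incidences of $e_1$ or $e_2$ in the subdivided hypergraph, and the freshly created elements $e_1$, $e_2$, $u$, and the two incidences at $u$ occur in no other path, so internal disjointness and the vertex--edge pair of end-points are preserved; the pigeonhole choice in the hub case and the rerouting of the odd path through $u$ handle the only nontrivial configurations. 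No sign hypothesis is needed, as you note, since the statement is purely incidence-structural. If you wanted a shorter phrasing, you could run the same argument in the incidence graph via Lemma \ref{L34}, where subdivision is a split of an edge-side vertex and a cross-theta is a theta subgraph with branch vertices in opposite parts of the bipartition --- your parenthetical already observes this equivalence, and either presentation is fine.
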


While incidences of multiplicity $3$ or greater contain cross-thetas, we may
subdivide the oriented hypergraph to produce a simple,
degenerate-circle-free, oriented hypergraph that necessarily contains a
cross-theta. The relationship between cross-thetas, degenerate circles,
and multiple incidences plays an essential role in extending the
investigation of minimal column dependencies of $\{0,\pm 1\}$-matrices to
determining minimal column dependencies of integral matrices.

Not only do circles factor prominently into the classification of the
minimal dependencies of signed graphs, but the sign of each circle plays an
important part as well. As a result, we turn our attention to the signed
circle structure of theta graphs.

\begin{lemma}
\label{L48}A vertex-theta or an edge-theta contains an even number of
negative circles.
\end{lemma}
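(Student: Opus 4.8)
The plan is to analyze a vertex-theta or edge-theta by labeling its three internally disjoint paths $P_1$, $P_2$, $P_3$ and its two end-points $x$ and $y$, then expressing the sign of each of the three circles formed by pairs of paths in terms of path signs, and finally observing that the product of these three circle signs is a perfect square (hence $+1$), which forces the number of negative circles among them to be even. Since these three circles are the only circles of a theta graph, this will complete the argument. First I would set up notation: the theta graph consists of paths $P_1, P_2, P_3$ from $x$ to $y$, all internally disjoint, and the three circles are $C_{12}=P_1\cup P_2$, $C_{13}=P_1\cup P_3$, $C_{23}=P_2\cup P_3$.

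The key computational step is to relate $sgn(C_{ij})$ to the path signs $sgn(P_i)$ and $sgn(P_j)$. In a vertex-theta or edge-theta, each path $P_i$ (viewed in the incidence graph) has even length, say $2\ell_i$, so in the hypergraph $P_i$ has $\ell_i$ internal alternating elements contributing; using the sign-of-a-path formula $sgn(B)=(-1)^{\lfloor n/2\rfloor}\prod \sigma(i_h)$ from the definitions, I would show that $sgn(C_{ij}) = \pm\, sgn(P_i)\, sgn(P_j)$ where the $\pm$ sign depends only on the parities of $\ell_i, \ell_j$ and is the same for all three pairs because all three paths share the same parity type (all even-length in the incidence graph, since both end-points are on the same side of the bipartition). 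The cleanest way to nail down this bookkeeping is probably to pass to the incidence graph via Lemma~\ref{L34}: a circle of the hypergraph is a circle of $\Gamma$, its sign is determined by the incidence signs along it together with a parity factor, and the concatenation $C_{12}$ reuses exactly the edges of $P_1$ and $P_2$ of $\Gamma$ with no incidence repeated, so the incidence-sign products multiply cleanly.

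Once that relation is in hand, the conclusion is immediate:
\begin{equation*}
sgn(C_{12})\, sgn(C_{13})\, sgn(C_{23}) = (\pm 1)^3\, \bigl(sgn(P_1)\, sgn(P_2)\, sgn(P_3)\bigr)^2 = \pm 1,
\end{equation*}
and I must check the residual global sign $(\pm 1)^3$ is actually $+1$. This is where I would be careful: the factor $(-1)^{\lfloor n/2\rfloor}$ attached to a circle of length $n$ (in the hypergraph) behaves additively only up to a correction when concatenating two paths into a circle, and I need the three corrections to cancel. Because $C_{ij}$ has hypergraph-length $\ell_i + \ell_j$ and each $\ell_i$ has a fixed parity (forced by the even length of $P_i$ in $\Gamma$ together with whether the end-points are vertices or edges), the three exponents $\lfloor(\ell_i+\ell_j)/2\rfloor$ sum to something whose parity I can compute directly from $\ell_1,\ell_2,\ell_3$; verifying this sum is even in all cases is the one genuinely fiddly point.

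The main obstacle I anticipate is exactly this parity bookkeeping for the floor-function exponents under path concatenation — making sure the ``$\pm$'' in $sgn(C_{ij}) = \pm\, sgn(P_i)\,sgn(P_j)$ is genuinely uniform across the three pairs and that the product of the three residual signs is $+1$ rather than $-1$. A clean alternative that sidesteps the floors entirely is to recall (as the text notes) that the sign of a circle equals the product of the signs of its adjacencies, and likewise define the sign of a path via its adjacencies; then $sgn(C_{ij})$ is literally the product of adjacency signs along $P_i$, times those along $P_j$, times the two ``junction'' adjacency signs at $x$ and at $y$ where $P_i$ meets $P_j$. The three circles then contribute junction adjacencies at $x$ of the form $(i_1, i_2)$, $(i_1, i_3)$, $(i_2, i_3)$ (the pairs of first-incidences of the three paths at $x$), and similarly at $y$; the product of each such triple of adjacency signs at a single vertex is $(-\sigma(i_1)\sigma(i_2))(-\sigma(i_1)\sigma(i_3))(-\sigma(i_2)\sigma(i_3)) = -(\sigma(i_1)\sigma(i_2)\sigma(i_3))^2 = -1$, and the two vertices (or two edges, in the edge-theta case) contribute $(-1)(-1)=+1$, while each path's adjacency-sign product appears squared. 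Hence the product of the three circle signs is $+1$ and an even number of the $C_{ij}$ are negative. I would present this adjacency-based argument as the main proof since it is the least error-prone.
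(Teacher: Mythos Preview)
Your overall strategy---showing that the product of the three circle signs is $+1$, hence an even number are negative---is correct and is in essence what the paper's argument reduces to: the paper simply asserts $sgn(C_{ij})=\varepsilon_i\varepsilon_j$ for the vertex-theta (where $\varepsilon_i=sgn(P_i)$), does a two-case check on the $\varepsilon_i$, and then handles the edge-theta by incidence duality via Lemma~\ref{L7}. Your concern about floor-function bookkeeping is unnecessary: for both vertex-paths and edge-paths the number $n$ of incidences is even, so $\lfloor n/2\rfloor=n/2$ is exactly additive when two such paths concatenate into a circle, and $sgn(C_{ij})=sgn(P_i)\,sgn(P_j)$ holds on the nose with no residual $\pm$ sign. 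That immediately gives $\prod sgn(C_{ij})=(\varepsilon_1\varepsilon_2\varepsilon_3)^2=+1$.

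Your fallback ``junction adjacency'' argument has a glitch in the vertex-theta case. Adjacencies are edge-centric (an adjacency sits at an edge, between two vertices), so when the shared end-points $x,y$ are \emph{vertices} there are no junction adjacencies at all: every adjacency of $C_{ij}$ is already an adjacency internal to $P_i$ or $P_j$, and your formula would introduce two spurious factors. The junction picture you describe is correct for the edge-theta, where $x,y$ are edges and each carries one adjacency per circle; there your computation $(-1)(-1)=+1$ is right. Either way the conclusion stands, but the cleanest route is the direct path-sign identity above, which is exactly the paper's (unstated) justification for $sgn(C_{ij})=\varepsilon_i\varepsilon_j$.
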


\begin{proof}
We will show the result for vertex-thetas, and observe that incidence
duality and Lemma \ref{L7} completes the proof for edge-thetas.

Let the paths connecting the end-points of a vertex-theta be $P_{1}$, $P_{2}$, and 
$P_{3}$. Let the signs of the three paths connecting the end-points be $\varepsilon _{1}$, $\varepsilon _{2}$, and $\varepsilon_{3}$ respectively.

From these three paths we have the following circles: $C_{1}=P_{1}\cup
P_{2}$, $C_{2}=P_{1}\cup P_{3}$, and $C_{3}=P_{2}\cup P_{3}$. The sign of $C_{1}$ 
is $\varepsilon _{1}\varepsilon _{2}$, the sign of $C_{2}$ is $\varepsilon _{1}\varepsilon _{3}$, and the sign of $C_{3}$ is $\varepsilon _{2}\varepsilon _{3}$. If the $\varepsilon _{i}$ are all $+$ or all $-$ then each circle is positive, thus there are no negative circles. 
If the $\varepsilon _{i}$ do not all have the same sign then without loss of generality
suppose $\varepsilon _{1}=+$ and $\varepsilon _{2}=-$. Then the sign of $C_{1}$ is negative, and depending on $\varepsilon _{3}$ exactly one of $C_{2} $ or $C_{3}$ will be negative as well.
\end{proof}

\begin{lemma}
\label{L49}A cross-theta contains an odd number of negative circles.
\end{lemma}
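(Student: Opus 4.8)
The plan is to mimic the structure of the proof of Lemma~\ref{L48}, adapting it to the parity change forced by the cross-theta. As in that proof, let $P_1$, $P_2$, $P_3$ be the three internally disjoint paths forming the cross-theta, and let $\varepsilon_1$, $\varepsilon_2$, $\varepsilon_3$ be their signs. The three circles of the theta graph are again $C_1 = P_1 \cup P_2$, $C_2 = P_1 \cup P_3$, $C_3 = P_2 \cup P_3$, and each circle is a pure circle (or at worst the sign formula still applies to its closed-walk structure). The key difference from the vertex-theta/edge-theta case is that in a cross-theta one end-point is a vertex and the other is an edge, so each $P_i$ is a cross-path of odd length in the incidence graph.

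The crux of the argument is therefore a careful sign bookkeeping: I need to show that $\operatorname{sgn}(C_1)\operatorname{sgn}(C_2)\operatorname{sgn}(C_3) = -1$ for a cross-theta, in contrast to the value $+1$ implicit in Lemma~\ref{L48}. The obstacle is that the sign of a circle formed by concatenating two paths is \emph{not} simply the product of the path signs, because the definition of $\operatorname{sgn}(B)$ carries the factor $(-1)^{\lfloor n/2 \rfloor}$ depending on the number of incidences $n$, and when two paths of lengths $n_i$ and $n_j$ are joined their lengths add, so $\lfloor (n_i+n_j)/2 \rfloor$ need not equal $\lfloor n_i/2 \rfloor + \lfloor n_j/2 \rfloor$. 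Specifically, if both $n_i$ and $n_j$ are odd, a ``carry'' of one extra sign is introduced. In a vertex-theta or edge-theta all three paths have even length, so no carries occur and $\operatorname{sgn}(C_i)$ really is the product of the two path signs --- this is why Lemma~\ref{L48} works as stated. In a cross-theta all three paths have \emph{odd} length, so \emph{every} one of the three joins incurs exactly one carry, contributing a factor of $(-1)$ to each $\operatorname{sgn}(C_i)$. Hence $\operatorname{sgn}(C_1)\operatorname{sgn}(C_2)\operatorname{sgn}(C_3) = (-1)^3 (\varepsilon_1\varepsilon_2)(\varepsilon_1\varepsilon_3)(\varepsilon_2\varepsilon_3) = -\varepsilon_1^2\varepsilon_2^2\varepsilon_3^2 = -1$.

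From $\operatorname{sgn}(C_1)\operatorname{sgn}(C_2)\operatorname{sgn}(C_3) = -1$ the conclusion is immediate: the product of the three circle signs is negative exactly when an odd number of the three signs is $-1$, i.e.\ the cross-theta contains either one or three negative circles, hence an odd number. I would present the carry computation either by a direct length argument in the incidence graph (using that the circles correspond to graphic circles via Lemma~\ref{L34}, and the length of $C_i$ in $\Gamma$ is $n_i + n_j$) or, more cleanly, by recording a small lemma: if $P$ and $Q$ are paths sharing exactly their two end-points, with $|P|$ and $|Q|$ incidences respectively, then $\operatorname{sgn}(P\cup Q) = (-1)^{\lfloor(|P|+|Q|)/2\rfloor - \lfloor|P|/2\rfloor - \lfloor|Q|/2\rfloor}\operatorname{sgn}(P)\operatorname{sgn}(Q)$, and the exponent is $0$ when $|P|+|Q|$ is even (both even or both odd of the same... actually both paths in a theta have the same parity, so the relevant cases are ``both even'' giving exponent $0$ and ``both odd'' giving exponent $1$). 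The main obstacle, as noted, is getting this floor-function parity accounting exactly right and making sure the ``all three paths are odd-length'' claim for cross-thetas is justified --- which it is, since a cross-path has odd length in the bipartite incidence graph as already observed in the paragraph preceding Lemma~\ref{L44}.
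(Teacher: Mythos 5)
Your proof is correct, but it takes a genuinely different route from the paper. The paper proves Lemma \ref{L49} by brute force: it fixes the three cross-paths and the three incidences of $e$ at their attachment vertices, and runs an exhaustive case analysis (five cases up to relabeling) over the possible combinations of path signs and incidence signs, checking in each case that exactly one or all three of the circles $C_{ij}$ are negative. You instead prove the single identity $sgn(C_{1})\,sgn(C_{2})\,sgn(C_{3})=-1$ by tracking the floor term in the definition $sgn(B)=(-1)^{\lfloor n/2\rfloor}\prod_h\sigma(i_h)$: concatenating two paths multiplies the incidence-sign products but introduces a correction $(-1)^{\lfloor(n_i+n_j)/2\rfloor-\lfloor n_i/2\rfloor-\lfloor n_j/2\rfloor}$, which is $+1$ when both lengths are even and $-1$ when both are odd; since every path of a cross-theta is a cross-path and hence of odd length (as noted before Lemma \ref{L44}), each of the three circles picks up one such factor, so the product of the three circle signs is $(-1)^3(\varepsilon_1\varepsilon_2\varepsilon_3)^2=-1$, forcing an odd number of negative circles. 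Your computation checks out (the carry is exactly $1$ in the odd--odd case, and the union of two internally disjoint cross-paths with common end-points is indeed a circle whose incidence set is the disjoint union of the two paths' incidences), and it buys a cleaner, case-free argument that also explains Lemma \ref{L48} uniformly: there the paths are vertex- or edge-paths of even length, no carry occurs, circle signs are genuine products of path signs, and the same product computation gives $+1$, i.e.\ an even number of negative circles. What the paper's case analysis buys instead is concreteness --- it exhibits explicitly which circles are negative in each configuration --- at the cost of having to justify that the listed cases are exhaustive up to relabeling. One small caveat in your write-up: purity is irrelevant here, since the sign of a circle is defined for degenerate circles as well, so your parenthetical hedge about pure circles can simply be dropped.
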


\begin{proof}
Let the end-points of the three cross-paths in a cross-theta be vertex $v$ and edge $e$. Let the cross-paths be $P_{1}$, $P_{2}$, and $P_{3}$ and the vertices of $e$ that belong to these cross-paths be $v_{1}$,$v_{2}$, and $v_{3}$, respectively.  Let $C_{ij}$ be the circle formed by paths $P_{i}$ and $P_{j}$ along with
edge $e$.

\textit{Case 1a:} Suppose $sgn(P_{1})=sgn(P_{2})=sgn(P_{3})$, and $\sigma
(v_{1},e)=\sigma (v_{2},e)=\sigma (v_{3},e)$. \ Then $%
sgn(C_{12})=sgn(C_{13})=sgn(C_{23})=-1$.

\textit{Case 1b:} Suppose $sgn(P_{1})=sgn(P_{2})=sgn(P_{3})$, and $\sigma
(v_{1},e)=\sigma (v_{2},e)\neq \sigma (v_{3},e)$. \ Then $sgn(C_{12})=-1$,
and $sgn(C_{13})=sgn(C_{23})=+1$.

\textit{Case 2a:} Suppose $sgn(P_{1})=sgn(P_{2})\neq sgn(P_{3})$, and $%
\sigma (v_{1},e)=\sigma (v_{2},e)=\sigma (v_{3},e)$. \ Then $sgn(C_{12})=-1$%
, and $sgn(C_{13})=sgn(C_{23})=+1$.

\textit{Case 2b:} Suppose $sgn(P_{1})=sgn(P_{2})\neq sgn(P_{3})$, and $%
\sigma (v_{1},e)=\sigma (v_{2},e)\neq \sigma (v_{3},e)$. \ Then $%
sgn(C_{12})=-1$, and $sgn(C_{13})=sgn(C_{23})=+1$.

\textit{Case 2c:} Suppose $sgn(P_{1})=sgn(P_{2})\neq sgn(P_{3})$, and $%
\sigma (v_{1},e)\neq \sigma (v_{2},e)=\sigma (v_{3},e)$. \ Then $%
sgn(C_{13})=-1$, and $sgn(C_{12})=sgn(C_{23})=+1$.

Up to relabeling, these cases exhaust all possible combinations of path
signs and incidence signs in a cross-theta.  In every case there are an odd number 
of negative circles.
\end{proof}

A cross-theta provides a hypergraphic object which must contain a negative
circle regardless of incidence orientation. \ While a cross-theta presents a
problem unique to hypergraphs, the following theorem examines the structural
properties of cross-thetas in flowers.

\begin{theorem}
\label{L50}If a flower contains a vertex of degree $\geq 3$, then it
contains a cross-theta.
\end{theorem}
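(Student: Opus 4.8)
The plan is to work in the incidence graph $\Gamma$, where a flower $F$ is circle-covered and minimal in the edge-induced ordering, and a vertex $v$ of degree $\geq 3$ in $F$ corresponds to a vertex $v$ of degree $\geq 3$ in $\Gamma$ all of whose incident edges (the incidences at $v$) must lie in circles, since $F$ is inseparable (every pair of incidences lies in a circle). First I would pick three distinct incidences $i_1, i_2, i_3$ at $v$, leading to edges $e_1, e_2, e_3$ of $F$ (not necessarily distinct as hypergraph edges, but distinct as incidences — and if two of them share the same edge $e$, Lemma~\ref{L44}'s argument or a degenerate-circle argument via Lemma~\ref{L45} already produces a cross-theta, so I may assume $e_1, e_2, e_3$ are distinct edges). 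The goal is to produce three internally disjoint paths in $\Gamma$ from the vertex-side point $v$ to a common endpoint, with that common endpoint forced to lie on the edge side — which is exactly a cross-theta.

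The key steps, in order: (1) Use inseparability to get, for each pair $\{i_a, i_b\}$, a circle through both; more usefully, use a Menger/fan-type argument at $v$ in $\Gamma$. Since $\deg_\Gamma(v) \geq 3$ and $F$ is inseparable (hence $2$-connected as a graph away from trivial cases, by Lemma~\ref{L37} and the theory of inseparable graphs), there exist three paths emanating from $v$ along $i_1, i_2, i_3$ that are internally disjoint and first meet again at some common vertex $x$ — this is the standard fact that in a $2$-connected graph a vertex of degree $\geq 3$ is the branch vertex of a theta subgraph. (2) Identify what $x$ is: it is a vertex of $\Gamma$, hence either a hypergraph vertex or a hypergraph edge. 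If $x$ is on the edge side, we are done — $v$ and $x=e$ are the endpoints of a cross-theta. (3) The remaining case is that every such theta through $v$ has its other branch vertex $x$ on the vertex side, i.e. all thetas at $v$ are vertex-thetas; I must derive a contradiction with minimality of $F$. Here the idea is that if the three incidences at $v$ can only be "completed" through vertex-side branch points, then one of the three $v$-to-$x$ paths, together with the structure, is redundant: I would argue that deleting a carefully chosen incidence or edge still leaves a circle-covered subhypergraph (every remaining incidence still lies in a circle), contradicting that $F$ is edge-minimally circle-covered. Concretely, in a vertex-theta at $v$ one of the three branch paths can be removed and the union of the other two is still a circle covering the relevant incidences; pushing this through the whole flower and using connectivity should show some edge or incidence is inessential.

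I expect step (3) — ruling out the "all thetas at $v$ are vertex-thetas" case via minimality — to be the main obstacle, because it requires showing that a degree-$\geq 3$ vertex whose every theta is a vertex-theta forces a proper circle-covered subhypergraph, which is a global argument about how circles interlock in $\Gamma$ rather than a purely local one. The cleanest route is probably the contrapositive of a minimality statement: suppose $F$ is a flower (so edge-minimal circle-covered) with a degree-$\geq 3$ vertex $v$ and no cross-theta; then every circle of $F$ is pure (Corollary~\ref{L46}), and I would analyze the block structure of $\Gamma - v$ together with the three pendant incidences at $v$ to exhibit an edge of $F$ whose weak-deletion preserves circle-coverage — the pureness (no chords in $\Gamma$) is what makes the bookkeeping tractable, since it prevents the deleted structure from secretly being needed as a chord elsewhere. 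A fallback, if the direct minimality argument is messy, is to induct on the cyclomatic number $\varphi_F$: a flower has $\varphi_F \geq 1$, and a degree-$\geq 3$ vertex with no cross-theta lets me break a circle (Corollary~\ref{L40}) in a way that keeps a smaller circle-covered subhypergraph with a degree-$\geq 3$ vertex, contradicting minimality of $F$ at the base case $\varphi_F = 1$ where $F$ must be a single circle (graphically $2$-regular) and hence has no degree-$\geq 3$ vertex at all.
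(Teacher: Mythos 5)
Your steps (1)--(2) are sound and essentially reproduce the opening of the paper's argument (a circle through two of the incidences at $v$ plus a connecting path from the third incidence gives a theta branched at $v$, using inseparability of $\Gamma$ via Lemma~\ref{L37}). The genuine gap is step (3), which you yourself flag as the main obstacle: ruling out the possibility that every theta branched at $v$ has its second branch vertex on the vertex side. That case is the entire content of the theorem, and your proposal only gestures at it (``I would argue that deleting a carefully chosen incidence or edge still leaves a circle-covered subhypergraph,'' ``should show some edge or incidence is inessential''). Neither sketched route is in a checkable state, and both sit awkwardly with the actual definition of a flower: minimality is in the \emph{edge-induced} ordering, so a contradiction requires exhibiting a proper edge-induced circle-covered subhypergraph. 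Deleting a single incidence, or ``breaking'' a circle as in Corollary~\ref{L40} (your fallback induction on $\varphi_F$), does not produce an edge-induced subhypergraph of $F$, so minimality of $F$ cannot be invoked against it; and the block-structure analysis of $\Gamma-v$ is never specified to the point where one can verify that circle-coverage survives the proposed deletion. The paper closes exactly this gap with a concrete construction: after subdividing so that $F$ is simple and degenerate-circle-free (Lemma~\ref{L47}), fix a circle $C$ through $(v,e_1)$ and $(v,e_2)$; degenerate-circle-freeness forces $e_3\notin C$; consider \emph{all} paths starting with $(v,e_3)$, ending on $C$, and internally disjoint from $C$. If every such path were a vertex-path, deleting all their internal elements would leave a smaller flower, contradicting minimality; hence some cross-path $Q$ ends at an edge $e$ of $C$, and $Q$ together with the two $v$--$e$ arcs of $C$ is the cross-theta. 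Your local observation that one branch of a vertex-theta is ``redundant'' is in the right spirit, but the global step -- deleting the interiors of the whole family of connecting paths at once and verifying the remainder is still circle-covered -- is the missing idea.

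A secondary problem: your reduction of the case where two of the three incidences at $v$ lie in the same edge $e$ does not follow from Lemmas~\ref{L44} or~\ref{L45}. Lemma~\ref{L44} requires multiplicity at least $3$, and an incidence of multiplicity exactly $2$ yields only a $2$-circle on $\{v,e\}$, which is not degenerate, so no cross-theta is immediate. The paper sidesteps this by the same subdivision preprocessing (reducing to a simple, degenerate-circle-free flower), which is how you should handle it as well.
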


\begin{proof}
By subdividing out all degenerate circles and all multiple incidences we
only need to consider simple, degenerate-circle-free, flowers since
subdivision does not remove cross-thetas by Lemma \ref{L47}.

Let $F$ be a simple, degenerate-circle-free, flower containing a vertex $v$
such that $\deg (v)\geq 3$, and let three of the edges incident to $v$ be $%
e_{1}$, $e_{2}$, and $e_{3}$. \ Since $F$ is a flower we know that there
must be a circle $C$ containing the incidences $(v,e_{1})$ and $(v,e_{2})$.
\ Also, since $F$ is degenerate-circle-free, $e_{3}$ cannot belong to $C$. \ 
$C$ must contain an edge of size $\geq 3$ in $F$ or there would be a smaller
flower, namely the circle-hypergraph corresponding to $C$, contradicting
minimality of $F$.

Let $\mathcal{P}$ be the collection of paths in $F$ containing the incidence 
$(v,e_{3})$ with one end-point vertex $v$ and the other an element of $C$
such that each path is internally disjoint from $C$. The elements of $%
\mathcal{P}$ are either cross-paths or vertex-paths, depending on non-$v$
end-point of $C$. $\mathcal{P}$ must contain at least one
cross-path or else every path of $\mathcal{P}$ would be a vertex-path and
the hypergraph resulting from the deletion of all the non-end-point elements of
the paths of $\mathcal{P}$ would result in a smaller flower, contradicting
the minimality of $F$.

Let $Q$ be a shortest cross-path of $\mathcal{P}$, and let the end-points of $Q$
be $v$ and $e$. Regard $C$ as two internally disjoint cross-paths $P_{1}$
and $P_{2}$ each with end-points $v$ and $e$ as well, this can be done since $e$
must also be an element of $C$. $P_{1}$, $P_{2}$, and $Q$ are three
internally disjoint cross-paths whose end-points coincide, and $F$ contains a
cross-theta.
\end{proof}

From this result we have the following corollary concerning cross-theta-free
flowers.

\begin{corollary}
\label{L51}Every vertex of a cross-theta-free flower must have degree equal
to $2$.
\end{corollary}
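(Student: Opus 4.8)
The plan is to obtain this as an immediate consequence of Theorem~\ref{L50} together with Lemma~\ref{L22}, so the work is mostly bookkeeping on the possible degrees of a vertex. The first step is to invoke the contrapositive of Theorem~\ref{L50}: since the flower under consideration is cross-theta-free, it cannot contain a vertex of degree $\geq 3$. Hence every vertex of the flower has degree $0$, $1$, or $2$.

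The second step is to rule out degrees $0$ and $1$. Here I would use the definition of a flower as a circle-covered oriented hypergraph and apply Lemma~\ref{L22}, which says that a circle-covered hypergraph contains no monovalent vertices and no isolated vertices. Thus no vertex of the flower has degree $0$ or $1$, and combining this with the first step forces every vertex to have degree exactly $2$.

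The only point that needs a moment's care — and the closest thing to an obstacle — is the degenerate boundary case in which the flower is a single $0$-edge: such a flower has no vertices at all, so the statement holds vacuously, and I would mention this explicitly so that the argument covers every flower permitted by the definition. Beyond that there is no real difficulty, since all the substantive combinatorial content has already been isolated in Theorem~\ref{L50} and Lemma~\ref{L22}.
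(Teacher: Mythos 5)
Your proposal is correct and follows essentially the same route as the paper: apply Theorem~\ref{L50} to exclude vertices of degree $\geq 3$, then use the flower's circle-covered/inseparable nature (the paper argues this directly, you cite Lemma~\ref{L22}) to exclude isolated and monovalent vertices. Your explicit remark that the $0$-edge flower satisfies the statement vacuously is a harmless extra precaution not present in the paper.
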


\begin{proof}
We know from Theorem \ref{L50} if a flower has a vertex of degree $\geq 3$
it must contain a cross-theta, so the degree of every vertex in a
cross-theta-free flower must be $\leq 2$. However, a flower cannot contain
a monovalent vertex or an isolated vertex since it would not be inseparable. 
Thus the degree of every vertex in a cross-theta-free flower must be
exactly $2$.
\end{proof}

\subsection{Ear Decompositions of Flowers}

Let $H$ be a hypergraph, $P$ be a path containing at least one incidence,
and $G(P)$ be the path-hypergraph corresponding to the elements of $P$. The
hypergraph $H\cup G(P)$ is said to result from \emph{adjoining an ear to }$H$
if $H\cap G(P)$ consists of only the end-points of $P$. This concept of
adjoining an ear follows the development in \cite{AGT}. When viewing this
process from the incidence graph, adjoining an ear to a hypergraph $H$ is
equivalent to the graphical concept of adjoining an ear in the incidence
graph $\Gamma _{H}$.

Adjoining an ear to a bipartite graph either connects the vertices within a
single part of the bipartition or connects the vertices across the
bipartition with a path. The connecting path is a vertex-path or an
edge-path in the corresponding oriented hypergraph if the end-points lie in a
single part of the bipartition, and is a cross-path if the end-points lie in
different parts of the bipartition. Observe that any path that connects to
the edge-part of the bipartition would increase the size of the edge in the
oriented hypergraph. A hypergraph that can be constructed starting from a
single vertex or edge by sequentially adjoining ears is said to have an 
\emph{ear decomposition}.

The following is a known result (see \cite{AGT}) concerning the structure of graphs.

\begin{theorem}
A connected graph has an ear decomposition if, and only if, it is
inseparable.
\end{theorem}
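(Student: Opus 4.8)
The plan is to prove the two implications separately, by induction, using directly the paper's definitions of ``ear'' and ``inseparable,'' together with one classical fact: for a connected graph with at least two edges, being inseparable (every pair of incidences on a common circle) is the same as having no cut vertex. This is Whitney's characterization and follows from Menger's theorem; the only genuinely small cases --- a single vertex (vacuously inseparable) and a single edge --- are exactly the permitted starting objects of an ear decomposition, so they are disposed of at once.

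\emph{Ear decomposition $\Rightarrow$ inseparable.} Write the decomposition as $G_0 \subseteq G_1 \subseteq \cdots \subseteq G_r = G$, with $G_0$ a single vertex or edge and $G_{j+1} = G_j \cup G(P_j)$ obtained by adjoining an ear. I would induct on $r$; the base case is immediate. In the inductive step, $G_{r-1}$ is inseparable by hypothesis and $P_{r-1}$ is a path whose two (necessarily distinct) end-points $x,y$ lie in $G_{r-1}$ while its interior is disjoint from $G_{r-1}$. I then check that $G_r$ has no cut vertex: an interior vertex of $P_{r-1}$ has degree $2$ and sits on the single thread from $x$ to $y$, so deleting it leaves $G_r$ connected through $G_{r-1}$; and for $z \in V(G_{r-1})$, the graph $G_{r-1}-z$ is connected, and whichever of $x,y$ survives still carries the remaining sub-path of $P_{r-1}$, so $G_r-z$ is connected. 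Hence $G_r$ is inseparable. (That $G_r$ has at least two edges, so that ``no cut vertex'' really yields ``inseparable,'' is clear once $G_0$ is not a single vertex; the single-vertex case is separate and trivial.)

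\emph{Inseparable $\Rightarrow$ ear decomposition.} If $G$ is a single vertex or a single edge there is nothing to do, so assume $G$ is connected, inseparable, with at least two edges; then $G$ contains a circle $C$. I start the construction from a single edge of $C$ and adjoin the remaining edges of $C$ as a first ear, so $C$ is built. Then I grow greedily: given an already-built subgraph $H$ with $C \subseteq H \subsetneq G$, connectedness of $G$ gives an edge $e=xy \in E(G)\setminus E(H)$ with $x \in V(H)$. If $y \in V(H)$ too, adjoin $e$ itself as a single-edge ear. Otherwise $G$ has no cut vertex, so $G-x$ is connected and contains a shortest path $Q$ from $y$ to $V(H)\setminus\{x\}$; by minimality $Q$ meets $H$ only in its far end-point $x'$, and (since edges of $H$ have both ends in $V(H)$) no edge of $Q$ lies in $H$, so $x,e,y$ followed by $Q$ is a path from $x$ to $x' \neq x$, internally disjoint from $H$, which I adjoin as an ear. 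Each step enlarges $V(H)\cup E(H)$, so the process stops; when it stops, connectedness of $G$ forbids any vertex or edge of $G$ outside $H$, whence $H=G$ and an ear decomposition of $G$ has been produced.

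The only real work is bookkeeping: verifying at each use that the candidate path genuinely satisfies the definition of adjoining an ear (interior off the current subgraph, both ends on it, ends distinct), and being careful with the degenerate starting objects so that the circle-based notion of inseparability lines up with the ``no cut vertex'' formulation actually used in the inductions. The structural content is entirely classical --- this is Whitney's ear-decomposition theorem --- so beyond these checks nothing deep is required, which is why the paper cites it rather than reproving it.
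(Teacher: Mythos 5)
The paper never proves this statement: it is quoted as a known result with a citation to Godsil and Royle, so there is no internal argument to compare against, and your proposal supplies the missing classical proof. In substance it is correct and it is the standard Whitney-style argument: the induction showing each ear preserves ``connected with no cut vertex'' is sound (interior vertices of the ear are handled by the thread between $x$ and $y$, vertices $z$ of the old graph by connectivity of $G_{r-1}-z$ plus the surviving end-point), and the greedy construction of ears --- take $e=xy\notin E(H)$ with $x\in V(H)$, and if $y\notin V(H)$ use connectivity of $G-x$ to find a shortest return path $Q$ to $V(H)\setminus\{x\}$ --- does satisfy the paper's definition of adjoining an ear after the bookkeeping you indicate. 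Two caveats are worth flagging. First, your bridge between the paper's circle-based notion of inseparability (every pair of incidences on a common circle) and ``no cut vertex'' invokes the nontrivial half of Whitney's characterization (in a $2$-connected graph any two edges lie on a common cycle) exactly in the direction ``ear decomposition $\Rightarrow$ inseparable''; citing it is legitimate --- the paper cites the entire theorem --- but that is where the real content sits, and it could alternatively be proved by the same induction along the ears. Second, the degenerate cases are not quite ``disposed of at once'': under the paper's definition a single edge has an ear decomposition (zero ears) yet its two incidences lie on no circle, so it is not inseparable; this mismatch lies in the statement itself rather than in your argument, but your parenthetical claim that $G_r$ has at least two edges whenever $G_0$ is not a single vertex fails precisely when $G=G_0$ is a single edge, so the small cases deserve an explicit sentence rather than a wave.
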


We are especially interested in the structure of cross-theta-free flowers
and applying this result to the incidence graph of a flower.

\begin{theorem}
\label{L52}If $F$ is a cross-theta-free flower, then every ear decomposition
of $F$ can be regarded as consisting of only edge-paths.
\end{theorem}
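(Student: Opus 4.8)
The plan is to work in the incidence graph $\Gamma_F$, where by Theorem (the ear-decomposition theorem quoted just above) and Lemma~\ref{L37} the inseparability of $F$ guarantees that $\Gamma_F$ has \emph{some} ear decomposition. The goal is to show that among all ear decompositions of $F$ there is one whose every ear, when read in the oriented hypergraph, is an edge-path; equivalently, in $\Gamma_F$ every ear after the first is a path joining two vertices on the \emph{edge} side of the bipartition. First I would record the structural facts we already have: by Corollary~\ref{L51} every vertex of a cross-theta-free flower has degree exactly $2$, so in $\Gamma_F$ every vertex lying in the $V_G$-part has degree $2$. A graphic ear decomposition of $\Gamma_F$ starts from a single edge or vertex and adjoins paths; the claim amounts to saying we may choose the base object and the ears so that no internal vertex of any ear, and no endpoint of any ear, ever sits on the $V_G$-side — except that of course $V_G$-vertices \emph{do} occur, but always as \emph{internal} degree-$2$ points of an ear, never as the branch vertices where ears attach.

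The key step is a rerouting/absorption argument. Since every $V_G$-vertex has degree $2$ in $\Gamma_F$, it can never be an endpoint of an ear in a \emph{reduced} ear decomposition: an ear endpoint must have degree $\geq 3$ in the graph built so far at the moment later ears are attached, or be a cycle-vertex of the base cycle. So I would argue that we may take the base object of the ear decomposition to be a cycle of $\Gamma_F$ through at least one $E_G$-vertex (a flower contains a circle, hence $\Gamma_F$ contains a cycle, and since $\Gamma_F$ is bipartite that cycle alternates $V_G$- and $E_G$-vertices, so it contains $E_G$-vertices), and then show inductively that each subsequent ear may be chosen with both endpoints on the $E_G$-side. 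The mechanism: if a candidate ear has an endpoint $v$ on the $V_G$-side, then $v$ already had degree $2$ in the previously constructed subgraph and the new ear raises it to $3$, contradicting Corollary~\ref{L51} applied to $F$ (the final graph) unless — and this is the point — we instead extend the ear \emph{through} $v$ by absorbing the one edge of $\Gamma_F$ at $v$ not yet used, pushing the genuine attaching endpoint to the next $E_G$-vertex along. Because $\deg_{\Gamma_F}(v)=2$ exactly, this extension is forced and unambiguous, and it terminates at an $E_G$-vertex. Iterating this along both ends of every ear converts an arbitrary ear decomposition into one with all ear-endpoints on the $E_G$-side; in the oriented hypergraph that is precisely an ear decomposition into edge-paths (a path both of whose endpoints are edges).

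I would then check the two degenerate possibilities for the base of the decomposition and dispose of them: if $F$ is a single $0$-edge there is nothing to prove (one edge-path of length zero, or the trivial decomposition), and if the graphic ear decomposition of $\Gamma_F$ is forced to start from a single vertex rather than an edge, the same absorption shows that starting vertex can be taken on the $E_G$-side, because any $V_G$-vertex has degree $2$ and is swallowed into the first cycle. The statement "can be regarded as consisting of only edge-paths" should be read with this flexibility — we are allowed to choose the base object and to merge forced degree-$2$ continuations into the ears — and I would make that explicit at the start of the proof so the reader knows the claim is about the existence of such a normalized decomposition, not about every literal ear decomposition in the raw graphic sense.

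The main obstacle I anticipate is bookkeeping the induction cleanly: one must be careful that when an ear is extended through a $V_G$-vertex $v$ of degree $2$, the edge being absorbed is genuinely not already in the constructed subgraph, and that the absorption does not accidentally close a cycle prematurely or collide with a $V_G$-vertex that has already been used — but the degree-$2$ constraint from Corollary~\ref{L51}, together with the fact that internal vertices of an ear are new, keeps this under control, since any $V_G$-vertex appears in the final decomposition in exactly one ear and exactly once, as an internal point. The only real content beyond this is the observation, already made in the paragraph preceding the theorem in the paper, that a path attaching to the edge-side would increase an edge's size — here we are in the reverse situation, using that edges \emph{can} grow while $V_G$-vertices are frozen at degree $2$, so all the "growth" of the flower is concentrated on the $E_G$-side, which is exactly why every ear can be made an edge-path.
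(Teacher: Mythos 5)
Your proposal is correct and follows essentially the paper's own route: pass to the incidence graph $\Gamma_F$, use inseparability (Lemma \ref{L37}) to obtain an ear decomposition, invoke Theorem \ref{L50}/Corollary \ref{L51} so that any ear attaching at a $V_G$-side vertex would create a vertex of degree $\geq 3$, and use bipartiteness to regard the base circle as adjoined at an edge-side vertex. The only cosmetic difference is your ``absorption/rerouting'' step, which is never actually needed (and in fact cannot arise): a $V_G$-side vertex already present in the partially built subgraph has both of its two incidences used, so no later ear can attach there at all, and the degree argument alone forces every ear to be an edge-path.
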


\begin{proof}
From Lemma \ref{L37} we know that $\Gamma _{F}$ is inseparable since $F$ is
a flower. \ Given an ear decomposition of $\Gamma _{F}$ regard the first
circle as adjoining an ear to a vertex of $\Gamma _{F}$ belonging to the
edge-part of the vertices. \ This can be done because $\Gamma _{F}$ is
bipartite.

By Theorem \ref{L50} we know that $F$ cannot contain a vertex of degree $3$
or greater, so adjoining additional ears in $\Gamma _{F}$ must connect two
vertices in the edge-part of the\ bipartition or else we would have a degree-%
$3$ vertex.
\end{proof}

\begin{corollary}
\label{L53}Let $\mathcal{P}$ be a collection of paths of an ear
decomposition of a cross-theta-free flower $F$. \ Every path of $\mathcal{P}$
must contain a unique vertex that does not belong to any other path of $%
\mathcal{P}$.
\end{corollary}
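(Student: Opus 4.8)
The plan is to work entirely inside the incidence graph $\Gamma_F$ and exploit Theorem \ref{L52}, which tells us that we may take an ear decomposition of $F$ in which every adjoined ear is an edge-path — that is, a path in $\Gamma_F$ whose two endpoints both lie in the edge-part of the bipartition. Recall also from Corollary \ref{L51} that every vertex of $F$ has degree exactly $2$, which in $\Gamma_F$ says that every vertex-side node of $\Gamma_F$ has graph-degree $2$. First I would set up notation: let the edge-paths of the ear decomposition be $P_1, P_2, \dots, P_r$ adjoined in that order, where $P_1$ is the initial circle (based at some edge-node) and each subsequent $P_j$ is an ear attached at two edge-nodes already present. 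Since each $P_j$ is an edge-path, its internal nodes alternate vertex-node, edge-node, vertex-node, \dots; in particular $P_j$ contains at least one internal vertex-node (an edge-path has odd length $\geq 1$ in $\Gamma$, and if the length is $1$ it is a single incidence joining two edge-nodes, contributing no new vertex — I will need to address this degenerate case separately, see below).

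The key observation is this: when ear $P_j$ is adjoined, every internal node of $P_j$ is brand new to the hypergraph constructed so far (that is the definition of adjoining an ear), and in particular every internal vertex-node of $P_j$ is new. So if I can show that each $P_j$ contains at least one internal vertex-node $u_j$, then $u_j$ belongs to $P_j$ but to no $P_{j'}$ with $j' < j$; and $u_j$ cannot belong to any $P_{j'}$ with $j' > j$ either, because $u_j$ already has graph-degree $2$ in $\Gamma_F$ after $P_j$ is adjoined (its two incident edges lie along $P_j$), so no later ear can pass through it — later ears only ever touch earlier structure at their two endpoints, which are edge-nodes, never at an internal vertex. Hence $u_j$ is the required unique vertex of $P_j$. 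For the initial "ear" $P_1$, which is the base circle: being cross-theta-free and a flower, $P_1$ is a pure circle all of whose vertices have degree $2$, so it certainly contains a vertex-node, and the same degree argument shows that vertex is not on any later ear. This handles all of $\mathcal{P} = \{P_1, \dots, P_r\}$.

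The main obstacle — and the one place the argument needs genuine care rather than bookkeeping — is the possibility of a length-one ear: an edge-path consisting of a single incidence joining two distinct edge-nodes $e_a, e_b$ already present, with no new internal vertex at all. I would rule this out using purity and the flower structure. A length-one ear in $\Gamma_F$ is a chord-like incidence; more precisely, adjoining it creates in $\Gamma_F$ a new circle through $e_a$ and $e_b$, and translating back through Lemma \ref{L34} and Corollary \ref{L36}, such a configuration forces a degenerate circle in $F$ (a vertex incident to an edge of a circle without lying adjacent on it in that circle), contradicting that $F$, after subdividing out degenerate circles as in the proof of Theorem \ref{L50}, is degenerate-circle-free — or, more directly, it produces a vertex of $\Gamma_F$ of degree $\geq 3$, i.e. a degree-$\geq 3$ vertex of $F$, contradicting Corollary \ref{L51}. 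Either way, every ear has length $\geq 3$ in $\Gamma_F$ and hence has an internal vertex-node, so the assignment $P_j \mapsto u_j$ completes the proof. I would write the degenerate-ear exclusion as the first step of the proof so the rest reads cleanly.
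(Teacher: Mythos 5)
Your overall strategy is the paper's: use Theorem \ref{L52} to arrange that every ear is an edge-path in the incidence graph $\Gamma_F$, note that the internal nodes of an ear are new when it is adjoined, and observe that later ears attach to existing structure only at their endpoints, which are edge-nodes, so an internal vertex-node of an ear belongs to that ear and to no other. That core argument is sound, and is in fact a more explicit version of the paper's one-line proof (which just says: the ears are edge-paths, $\Gamma_F$ is bipartite, so each path has even length and hence contains a vertex of $F$ unique to it).

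The problem is your parity bookkeeping and the ``main obstacle'' you construct around it. An edge-path joins two nodes in the same (edge) part of the bipartition of $\Gamma_F$, so it has \emph{even} length, not odd length as you assert; the paper states this explicitly in the theta-graph discussion. In particular the length-one ear joining two edge-nodes $e_a,e_b$ that your final paragraph labors to exclude simply cannot exist: every edge of $\Gamma_F$ is an incidence of $F$ and therefore joins a vertex-node to an edge-node, so no single incidence connects two edge-nodes. Your proposed exclusions are also off target --- such a configuration would contain no vertex-node at all, so it would not create a degree-$\geq 3$ vertex of $F$, and the degenerate-circle/purity argument does not engage it; moreover your conclusion ``every ear has length $\geq 3$'' has the wrong parity (the truth is even length $\geq 2$, and a length-$2$ ear, with its single internal vertex-node, is perfectly fine for your argument). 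Replace that paragraph with the one-line bipartite-parity observation and the rest of your proof goes through as written.
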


\begin{proof}
Using Theorem \ref{L52} we see that any path in an ear decomposition of $%
\Gamma _{F}$ as must connect two vertices of $\Gamma _{F}$ that are edges of 
$F$. \ Since $\Gamma _{F}$ is bipartite every such path must have even
length, and every path must contain a unique vertex of\ $\Gamma _{F}$ that
corresponds to a vertex of $F$.
\end{proof}

This provides us with the following property for a collection of essential
circles of a cross-theta-free oriented hypergraph.

\begin{corollary}
\label{L54}Given a collection of essential circles of a cross-theta-free
flower $F$, there exists a set\ of distinct vertex representatives for each
essential circle.
\end{corollary}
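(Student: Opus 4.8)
The plan is to produce the required system of distinct vertex representatives directly from the incidences that break the essential circles. Let $\mathcal{C}=\{C_{1},\dots,C_{k}\}$ be the given collection of essential circles; by Corollary~\ref{L41} we have $k=\varphi _{F}$. By the definition of breaking there are incidences $i_{1},\dots,i_{k}$, with $i_{j}$ an incidence of $C_{j}$, whose deletion leaves an acyclic hypergraph; equivalently, deleting the corresponding edges from the incidence graph $\Gamma _{F}$ leaves an acyclic graph. For each $j$ let $v_{j}$ be the vertex contained in the incidence $i_{j}$; since $i_{j}$ is an incidence of the circle $C_{j}$, the vertex $v_{j}$ lies on $C_{j}$. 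It therefore suffices to show that $v_{1},\dots,v_{k}$ are pairwise distinct, and the whole argument reduces to two counting observations about $\Gamma _{F}$.

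First, the incidences $i_{1},\dots,i_{k}$ are themselves distinct and $T:=\Gamma _{F}\smallsetminus \{i_{1},\dots,i_{k}\}$ is a spanning tree of $\Gamma _{F}$. Indeed $F$, being a flower, is inseparable, so $\Gamma _{F}$ is connected with cyclomatic number $\varphi _{\Gamma _{F}}=\varphi _{F}=k$; since deleting a single edge lowers the cyclomatic number of a graph by at most one, an acyclic-inducing edge set must have at least $k$ edges, and as $\{i_{1},\dots,i_{k}\}$ does the job with at most $k$ edges it has exactly $k$ distinct elements, whence its complement in $\Gamma _{F}$ has $|V(\Gamma _{F})|-1$ edges and is a spanning tree. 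Second, by Corollary~\ref{L51} every vertex of the cross-theta-free flower $F$ has degree exactly $2$, i.e.\ is incident to exactly two incidences. Since $T$ is a spanning tree it contains at least one edge at each vertex of $\Gamma _{F}$, so at each $v\in V_{F}$ at most one of its two incidences belongs to $\{i_{1},\dots,i_{k}\}$. Consequently no vertex is the vertex of two distinct $i_{j}$, so $v_{1},\dots,v_{k}$ are pairwise distinct and $\{v_{1},\dots,v_{k}\}$ is the desired set of distinct vertex representatives.

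The substantive step is the counting in the second paragraph, and the point at which the hypothesis is genuinely used is Corollary~\ref{L51}: for a general oriented hypergraph two distinct incidences may share a vertex, and this construction would then collapse. I would not need Hall's theorem or the ear-decomposition results of Corollary~\ref{L53} for this; one could alternatively present the second paragraph as a verification of Hall's condition for the bipartite incidence between the essential circles of $\mathcal{C}$ and the vertices of $F$, since that condition follows from the same spanning-tree count, but the explicit construction above is cleaner.
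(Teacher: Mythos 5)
Your proof is correct, but it follows a genuinely different route from the paper's. The paper derives Corollary \ref{L54} from the ear-decomposition machinery: Theorem \ref{L52} shows every ear decomposition of a cross-theta-free flower consists of edge-paths, and Corollary \ref{L53} then gives, for each path (hence each essential circle), a vertex lying on no other path. You instead work directly with the breaking incidences supplied by the definition of an essential-circle collection: the counting argument that the $\varphi_F$ deleted incidences are distinct and leave a spanning tree of $\Gamma_F$ is sound (each edge deletion drops the cyclomatic number by at most one, and an acyclic subgraph with $|V_\Gamma|-1$ edges is a spanning tree), and Corollary \ref{L51} (degree exactly $2$) then forces at most one deleted incidence per vertex of $V_F$, so the vertices of the broken incidences form the desired system of distinct representatives; this is shorter and bypasses \ref{L52}--\ref{L53} entirely, using only \ref{L51} and the cyclomatic count. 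What the paper's route buys, and what you lose, is a strictly stronger conclusion: Corollary \ref{L53} produces for each essential circle a vertex belonging to \emph{no other} essential circle of the collection, and that stronger property is exactly what is invoked in the proof of Theorem \ref{L64} (the zeroed-out row must avoid all other essential circles). Your representative $v_j$ is only guaranteed to be distinct from the others; it may well lie on other circles of the collection (its second, tree, incidence can be traversed by another essential circle), so your argument proves the corollary as literally stated but could not be substituted for the paper's without reworking the rank computation in Theorem \ref{L64}. Minor points worth making explicit if you write this up: you are using Lemma \ref{L34} to pass between circles of $F$ and of $\Gamma_F$, and the connectedness of $\Gamma_F$ (from $F$ being a flower) when you assert $\varphi_{\Gamma_F}=\varphi_F=k$.
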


\begin{proof}
Let $F$ be a cross-theta-free flower and $\mathcal{C}$ be a set of essential
circles of $F$. \ Since $F$ is inseparable, by Theorem \ref{L52}, $F$ can be
built by adjoining edge-path ears. \ Moreover, a collection of essential
circles has a natural ear decomposition since they generate all the circles
of $F$. \ By Corollary \ref{L53} there must exist a vertex in each essential
circle that does not belong to any other circle in $\mathcal{C}$.
\end{proof}

%%%%%%%%%%%%%
%% %% Section 6 %%%%
%%%%%%%%%%%%%

\section{The Notion of Balance for Oriented Hypergraphs}
\label{Section6}

\subsection{Variations of Balance}

We say an oriented hypergraph is \emph{balanced} if all circles are
positive. An oriented hypergraph is \emph{balanceable} if there are
incidences that can be negated so that the resulting oriented hypergraph is
balanced. An oriented hypergraph that is not balanceable is said to be 
\emph{unbalanceable}. Clearly, any oriented hypergraph containing a
cross-theta must necessarily be unbalanceable by Theorem \ref{L49}. In
fact, we will see that cross-thetas are the only obstruction to
balanceability by translating existing formulations of balance to oriented
hypergraphs.

The concept of a balanced non-oriented hypergraph was introduced by Berge
in \cite{Berge1} as one of a number of different generalizations of
bipartite graphs. Berge defined a hypergraph as \emph{balanced} if every odd 
circle has an edge containing three vertices of the circle. In terms of oriented
hypergraphs this is equivalent to all odd circles being degenerate and all
even circles being pure. Moreover, Berge's work can be regarded as
incidence matrices whose entries consist of $0$ and $1$, so every adjacency
is necessarily negative if considered as an oriented hypergraph. If every adjacency 
in an oriented hypergraph is negative, then a circle is negative if, and only if, 
it has odd length.

A balanced $\{0,\pm 1\}$-matrix was introduced by Truemper in \cite{TrAlpha}
as a generalization of a balanced hypergraph. A $\{0,\pm 1\}$-matrix is a 
\emph{hole matrix} if it contains two non-zero entries per row and per
column and no proper submatrix has this property. A hole matrix is \emph{even} 
if the sum of its entries is congruent to $0$ mod $4$, and 
\emph{odd} if the sum of its entries is congruent to $2$ mod $4$. 
A $\{0,\pm 1\}$-matrix $A$ is \emph{balanced} if no submatrix of $A$ is an
odd hole.

There are a number of simple observations translating concepts from balanced
matrices to oriented hypergraphs.

\begin{proposition}
Let $H$ be a hole matrix and $C$ be the corresponding circle in the
associated oriented hypergraph. $\ H$ is a hole submatrix if, and only if, $
C $ is pure.
\end{proposition}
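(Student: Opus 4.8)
The plan is to unwind both definitions — what it means to be a hole submatrix, and what it means for a circle to be pure — and observe that they are literally the same condition, phrased in matrix language versus hypergraphic language. This is the kind of statement that should follow from definition-chasing plus the translation dictionary already built up in Sections \ref{Section2}--\ref{Section5}, in particular the incidence-graph viewpoint of Corollary \ref{L36}.

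First I would set up the correspondence. A hole matrix $H$ has exactly two nonzero entries in every row and every column, and no proper submatrix with this property; by the remark following Lemma \ref{L10}, $H$ is the incidence matrix of a simple oriented hypergraph whose vertices index the rows and whose edges index the columns. The ``two nonzero entries per row'' condition says every vertex has degree $2$; the ``two nonzero entries per column'' condition says every edge has size $2$; so the associated oriented hypergraph is a disjoint union of circles (a connected $2$-regular, $2$-uniform simple hypergraph is exactly a circle, since its incidence graph is a connected graph in which every vertex has degree $2$, i.e.\ a graphic cycle, which by Lemma \ref{L34} is a circle). The minimality clause — no proper submatrix is itself a hole matrix — forces connectedness, so $H$ corresponds to a single circle $C$. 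This is the content I would want to state carefully, perhaps citing Lemma \ref{L22} or the degree/size bookkeeping directly.

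Next I would address purity. By Corollary \ref{L36}, $C$ is pure if and only if it is chord-free in the incidence graph $\Gamma$, i.e.\ there is no incidence outside $C$ joining a vertex of $C$ to an edge of $C$. In matrix terms, an incidence outside $C$ but between a row and column of $H$ is exactly a nonzero entry of the ambient matrix lying in a row and column of the submatrix $H$ but not recorded in $H$ itself — but since $H$ is a \emph{sub}matrix, every entry of the ambient matrix in those rows and columns already appears in $H$. So the relevant statement is: $C$ is pure (equivalently, a pure circle, equivalently chord-free) if and only if the submatrix $H$ already has exactly two nonzero entries per row and per column with nothing extra, which is precisely the hole-matrix condition. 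Conversely a degenerate circle produces a row of the corresponding submatrix with a third nonzero entry, so that submatrix is not a hole matrix. I would phrase this as: degeneracy of $C$ $\Leftrightarrow$ some vertex of $C$ meets an edge of $C$ via an incidence not in $C$ $\Leftrightarrow$ the submatrix on the rows and columns of $C$ fails to have exactly two nonzeros per row $\Leftrightarrow$ it is not a hole submatrix.

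I expect the only real subtlety — not so much an obstacle as a point requiring care — is the direction where one starts from an arbitrary hole submatrix $H$ of a larger $\{0,\pm1\}$-matrix $A$ and must recover a \emph{pure} circle: one has to check that the minimality in the definition of ``hole matrix'' (no proper submatrix has two nonzeros per row and column) is what rules out $C$ being degenerate \emph{and} what forces $C$ to be a single circle rather than a union of several. Both follow from the incidence-graph picture: a proper submatrix with the two-nonzeros property would be a strictly smaller $2$-regular, $2$-uniform sub-configuration, i.e.\ a shorter circle sitting inside, which exists exactly when $C$ is disconnected or degenerate. So the proof reduces to recording these equivalences; no computation is needed beyond the degree/size count.
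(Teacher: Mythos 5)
Your argument is correct, and there is nothing in the paper to measure it against: this proposition is one of the three statements the paper records as ``simple observations'' translating balanced-matrix language into oriented-hypergraph language, and no proof is supplied. What you wrote is exactly the intended translation: purity of $C$ is chord-freeness in the incidence graph (Lemma \ref{L35}, Corollary \ref{L36}), and a chord between a vertex and an edge of $C$ is precisely an extra non-zero entry of the ambient matrix in a row and column of $C$, so the submatrix on those rows and columns has exactly two non-zeros per row and per column (and, being a single $2k$-cycle in the incidence graph, no proper submatrix with that property) if and only if $C$ is pure. One small point to state carefully: the $2$-regular, $2$-uniform hypergraph in your first paragraph is the hypergraph of the hole matrix $H$ by itself, not the subhypergraph of $G$ induced by the edges of $C$ — those edges may have size greater than $2$ in $G$, with the extra incidences lying in rows outside $H$; this is harmless because both the hole-submatrix condition and degeneracy of $C$ only concern entries in the rows and columns of $C$, which your second paragraph handles. (Also, the bijection between $\{0,\pm1\}$-matrices and simple oriented hypergraphs is set up in Section \ref{Section2}, not after Lemma \ref{L10}.)
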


\begin{proposition}
Let $H$ be a hole matrix and $C$ be the corresponding circle in the
associated oriented hypergraph. $H$ is even if, and only if, $C$ is
positive.
\end{proposition}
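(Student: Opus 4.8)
The plan is to translate the parity condition in the definition of an even hole matrix directly into the sign of the corresponding circle, using the relationship between incidence signs, adjacency signs, and circle signs already established in the excerpt. First I would fix a hole matrix $H$ with rows indexed by vertices $v_1,\dots,v_k$ and columns by edges $e_1,\dots,e_k$ (a hole has exactly two nonzero entries per row and per column and is minimal with this property), and recall from Lemma~\ref{L34} and the surrounding discussion that $H$ corresponds to a circle $C$ in the associated oriented hypergraph, with the nonzero entry in position $(v_i,e_j)$ being $\sigma(v_i,e_j)=\pm 1$. The sum of the entries of $H$ is $S=\sum \sigma(i_h)$ over the $2k$ incidences $i_h$ of $C$. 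Writing $n^{+}$ for the number of incidences oriented $+1$ and $n^{-}$ for those oriented $-1$, we have $n^{+}+n^{-}=2k$ and $S=n^{+}-n^{-}$, so $S\equiv 2k - 2n^{-}\pmod 4$, i.e.\ $S\equiv 0\pmod 4$ exactly when $k-n^{-}$ is even, equivalently when $k+n^{-}$ is even.

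Next I would compute the sign of $C$ from its definition. For a circle of length $k$ we have $sgn(C)=(-1)^{p}\prod_{h=1}^{2k}\sigma(i_h)$ with $p=\lfloor 2k/2\rfloor = k$. Now $\prod_{h=1}^{2k}\sigma(i_h)=(-1)^{n^{-}}$, so $sgn(C)=(-1)^{k}(-1)^{n^{-}}=(-1)^{k+n^{-}}$. Therefore $sgn(C)=+1$ if and only if $k+n^{-}$ is even, which by the previous paragraph is exactly the condition $S\equiv 0\pmod 4$, i.e.\ that $H$ is even. This gives the equivalence.

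The only genuine subtlety — and the step I expect to require the most care — is making sure the bookkeeping between the matrix-theoretic count (sum of entries mod $4$) and the hypergraphic count (number of negatively oriented incidences mod $2$) is airtight, in particular that every nonzero entry of a hole matrix really is $\pm 1$ (this uses that $H$ is a submatrix of a $\{0,\pm1\}$-matrix, so the simple-hypergraph reduction applies and each of the $2k$ incidences of $C$ contributes a single $\pm 1$) and that $C$ is genuinely a circle of length $k$ rather than some degenerate configuration — but the two-nonzeros-per-row-and-column condition together with minimality forces precisely a circle, as in the companion Proposition on hole submatrices and purity. Everything else is the modular arithmetic above, which I would present compactly rather than belabor. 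One can also phrase the whole argument through the earlier observation that the sign of a circle is the product of the signs of its adjacencies: each vertex $v_i$ lies on two incidences whose product of orientations, negated, is the adjacency sign at $v_i$, and the product over all $k$ vertices recovers $(-1)^{k}\prod_h \sigma(i_h)$; I would mention this as the conceptual reason but carry out the proof via the incidence count since it makes the mod-$4$ connection most transparent.
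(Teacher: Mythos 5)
Your computation is correct: with $n^{+}+n^{-}=2k$ incidences the entry sum is $2k-2n^{-}$, which is $\equiv 0 \pmod 4$ exactly when $k+n^{-}$ is even, i.e.\ exactly when $sgn(C)=(-1)^{k}\prod_{h=1}^{2k}\sigma(i_{h})=+1$, and your care about each nonzero entry being a single $\pm1$ incidence and the hole structure forcing a genuine circle is appropriate. The paper states this proposition without proof as one of several simple observations translating balanced-matrix notions to oriented hypergraphs, and your parity argument is precisely the intended translation, so it matches the paper's (implicit) approach.
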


\begin{proposition}
\label{L55}A $\{0,\pm 1\}$-matrix is balanced if, and only if, every pure
circle in its associated oriented hypergraph is positive.
\end{proposition}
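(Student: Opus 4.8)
The plan is to unwind the chain of definitions connecting Truemper's notion of a balanced $\{0,\pm 1\}$-matrix to the circle structure of the associated oriented hypergraph, leaning on the two preceding propositions. Recall that a $\{0,\pm 1\}$-matrix $A$ is balanced precisely when no submatrix of $A$ is an odd hole, i.e.\ no submatrix is simultaneously a hole matrix and odd. I would first observe that submatrices of $A$ with exactly two non-zero entries per row and per column and no proper such submatrix are exactly the hole submatrices, and by the first preceding proposition these correspond bijectively to pure circles in the associated oriented hypergraph $G_A$. Then, by the second preceding proposition, such a hole submatrix is even if and only if the corresponding pure circle is positive, hence it is odd if and only if the corresponding pure circle is negative.

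Putting these together: $A$ contains an odd hole submatrix if and only if $G_A$ contains a negative pure circle. Therefore $A$ is balanced (no odd hole submatrix) if and only if $G_A$ has no negative pure circle, which is exactly the statement that every pure circle of $G_A$ is positive. I would write this out as a short equivalence, citing the two propositions for the two translation steps and Lemma~\ref{L34} (circles of $G$ correspond to circles of the incidence graph) only if needed to make the "hole matrix $\leftrightarrow$ circle" correspondence fully rigorous at the submatrix level.

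The one point that needs a little care — and the only place I see a potential obstacle — is the precise matching between \emph{submatrices} that are holes and \emph{sub-configurations} of $G_A$ that are circles. A hole submatrix is extracted by choosing a subset of rows and a subset of columns; this corresponds to a cross-induced (in fact, edge-restriction plus vertex-restriction) subhypergraph, and one must check that the "two non-zero entries per row and per column, minimal such" condition on the submatrix translates exactly to "every vertex has degree $2$ and every edge has size $2$ in the subconfiguration, minimal such," which is the incidence-matrix description of a circle that is pure (no chords, by Corollary~\ref{L36}). This is precisely what the first preceding proposition asserts, so I would simply invoke it; the argument then reduces to the two-line parity bookkeeping above. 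No new estimates or constructions are required — the content is entirely in the already-established dictionary between holes and pure circles, and between the mod-$4$ parity of entry-sums and circle sign.
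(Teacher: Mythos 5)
Your argument is exactly the intended one: the paper presents Proposition~\ref{L55} as an immediate consequence of the two preceding propositions (hole submatrix $\leftrightarrow$ pure circle, even hole $\leftrightarrow$ positive circle) together with Truemper's definition of balance as the absence of odd hole submatrices, and it gives no further proof. Your unwinding of that dictionary, including the care about the submatrix-to-subhypergraph correspondence, is correct and matches the paper's approach.
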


As Proposition \ref{L55} indicates, the difference between the concept of a
balanced matrix and the concept of a balanced oriented hypergraph is one of
purity. A $\{0,\pm 1\}$-matrix is balanced if, and only if, every pure
circle is positive, while an oriented hypergraph is balanced if, and only
if, all circles are positive. This change simply moves degenerate circles
from being thought of as balanceable for matrices to being thought of as
unbalanceable for oriented hypergraphs. The concept of balanceability in an
oriented hypergraph is weakening of the concept of a balanceable matrix, and
a survey of balanced matrices by M. Conforti, G. Cornu\'{e}jols, and K. Vu%
\v{s}kovic can be found in \cite{BM}.

\subsection{Obstructions to Balanceability}

The characterization of the minimal obstructions to balanceability of $\{0,\pm
1\}$-matrices is due to Truemper \cite{TrLog}, while the following
adaptation of Truemper's result to oriented hypergraphs follows \cite{BM,
CGK}.

A \emph{hole} in a graph is a chord-free circle of length $4$ or greater,
while a \emph{wheel} is a subgraph consisting of a hole $H$ and a vertex $v$
having at least three neighbors in $H$. A wheel is \emph{odd} if the
number of neighbors of $v$ in $H$ is odd. A $3$\emph{-path configuration}
in a graph is a subgraph consisting of three internally disjoint paths
between two non-adjacent vertices, and a $3$\emph{-odd-path configuration}
is a $3$-path configuration where each path has odd length. Observe that
in a bipartite graph a $3$-odd-path configuration connects two vertices in
opposite sides of the bipartition using $3$ internally-disjoint paths.

The following characterization of balanceability due to Truemper.

\begin{theorem}
\label{L56}A bipartite graph is balanceable if, and only if, it does not
contain an odd wheel or a $3$-odd-path configuration as a subgraph.
\end{theorem}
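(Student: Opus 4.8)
This is Truemper's characterization of balanceable bipartite graphs, phrased in the terminology of this paper (a bipartite graph being exactly the incidence graph of a simple oriented hypergraph), so my plan is to give the easy implication in full using the parity lemmas above and to obtain the hard implication by following Truemper's argument \cite{TrLog} as streamlined in \cite{BM, CGK}.

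\emph{Necessity.} I would show that neither configuration admits an orientation with all circles positive, which in particular rules out a balanced signing. A $3$-odd-path configuration is, as a subgraph, precisely a cross-theta --- its two end-vertices lie on opposite sides of the bipartition and are joined by three internally disjoint odd paths --- so Lemma \ref{L49} already shows that every orientation leaves an odd, hence nonzero, number of its circles negative. For an odd wheel with hole $H$ of length $2m$ and hub $v$ having $k\ge 3$ neighbours on $H$ with $k$ odd, the $k$ spokes split $H$ into $k$ arcs; pairing cyclically consecutive spokes produces circles $H_{1},\dots ,H_{k}$, each the union of two spokes with one arc. Each arc lies in one $H_{j}$ and each spoke in two, so tracking the length-parity factor in the definition of $sgn$ gives $sgn(H)\,\prod_{j=1}^{k}sgn(H_{j})=(-1)^{k}=-1$; hence an odd number of the circles in $\{H,H_{1},\dots ,H_{k}\}$ is negative under every orientation, and the wheel is unbalanceable. (If one insists on induced configurations, first pass to a minimal one.)

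\emph{Sufficiency.} Assume $G$ is bipartite with no odd wheel and no $3$-odd-path configuration; I would construct a balanced signing by induction along an ear decomposition. One may assume $G$ is $2$-connected, since a balanced signing of each block extends freely across cut vertices and neither obstruction is created by gluing at a vertex. Write $G=G_{0}\cup P_{1}\cup\cdots\cup P_{m}$ with $G_{0}$ a cycle, signed positively. Given a balanced signing of $G_{i-1}$, adjoin $P_{i}$ with end-vertices $u,w$. By Lemma \ref{L13} switching alters no circle sign, so the only genuine freedom is the single value $sgn(P_{i})\in\{+1,-1\}$; every new circle has the form $P_{i}\cup Q$ for a path $Q$ from $u$ to $w$ in $G_{i-1}$, and its positivity forces $sgn(P_{i})$ to a value dictated by $sgn_{G_{i-1}}(Q)$ and the length of $Q$. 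A conflict can occur only if two such paths $Q_{1},Q_{2}$ force opposite values. If $Q_{1}$ and $Q_{2}$ are internally disjoint this already yields an obstruction: $Q_{1}\cup Q_{2}\cup P_{i}$ is a theta, which is either a cross-theta --- giving a $3$-odd-path configuration --- or a vertex- or edge-theta, in which case Lemma \ref{L48}, combined with the positivity of the circle $Q_{1}\cup Q_{2}$ inside the balanced $G_{i-1}$, forces $sgn(P_{i}\cup Q_{1})=sgn(P_{i}\cup Q_{2})$ and there is no conflict after all.

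The genuinely hard step is the remaining case, in which the conflicting paths $Q_{1},Q_{2}$ overlap internally, so that $Q_{1}\cup Q_{2}\cup P_{i}$ is not a simple theta. One takes a minimal conflicting subconfiguration, exploits $2$-connectivity and the chordlessness forced by minimality to pin down its branch structure, and argues that it must contain an odd wheel --- with hub a branch vertex of the overlap --- or a $3$-odd-path configuration. Carrying out this analysis is the combinatorial heart of Truemper's theorem, so rather than reconstruct it I would cite \cite{TrLog, BM, CGK}; I would also invoke Camion's theorem --- that a balanced signing, when one exists, is unique up to switching --- to keep the forced signs coherent throughout the induction.
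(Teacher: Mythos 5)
Your proposal should first be set against the fact that the paper does not prove Theorem \ref{L56} at all: it is quoted as Truemper's theorem and attributed to \cite{TrLog}, following \cite{BM, CGK}, so deferring the hard half of sufficiency to those sources is consistent with the paper's treatment. The trouble is with the parts you do argue, both of which use the wrong notion of balance. For a bipartite graph (equivalently a $\{0,\pm 1\}$-matrix), ``balanced'' constrains only the holes, i.e.\ the chordless circles, as Proposition \ref{L55} and the surrounding discussion emphasize; it does not make every circle positive, and in fact no signing of a circle with a chord makes that circle and the two circles through the chord simultaneously positive. Your necessity argument correctly shows that the circles inside an odd wheel or a $3$-odd-path configuration cannot all be positive (the identity $sgn(H)\prod_{j}sgn(H_{j})=(-1)^{k}$ is right, as is the appeal to Lemma \ref{L49}), but to contradict balanceability of $G$ you need those circles to be holes of $G$. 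That is exactly why Truemper's configurations are the induced structures of \cite{BM, CGK}: three chordless paths with no edges between their interiors, and a wheel taken over a hole $H$ with all of the hub's neighbours on $H$ counted. Under the loose ``subgraph'' reading that your parenthetical treats as optional, the implication is simply false: adding one chord between interior vertices of two paths of a $3$-odd-path configuration can produce a balanceable bipartite graph that still contains the configuration as a subgraph with non-adjacent end-vertices, so ``passing to a minimal one'' cannot recover an obstruction --- none exists.

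The same conflation undermines the sufficiency sketch. Your ear induction demands that every new circle $P_{i}\cup Q$ be positive, which is strictly stronger than balance and unattainable in general: a $6$-cycle plus a chord is balanceable, yet your procedure registers a ``conflict'' between the two $u$--$w$ paths; the resulting theta is a cross-theta one of whose paths is a single edge, hence not a $3$-odd-path configuration (the paper itself notes that $3$-odd-path configurations correspond only to non-degenerate cross-thetas), so your case analysis produces no forbidden subgraph and the induction stalls on a balanceable graph. Likewise ``the positivity of the circle $Q_{1}\cup Q_{2}$ inside the balanced $G_{i-1}$'' is not available when that circle has chords. A correct induction must maintain only that all holes are even, and managing the circles that acquire chords is precisely the combinatorial heart you defer to \cite{TrLog, BM, CGK}. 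If, like the paper, you intend to cite Truemper for that core, then state the configurations in their induced form and prove necessity by observing that their constituent circles are then holes of $G$, forced positive by any balanced signing, contradicting the parity counts; as written, the easy direction does not go through and the inductive frame for the hard direction aims at the wrong invariant.
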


If we take the bipartite representation graph of a $\{0,\pm 1\}$-matrix,
then odd wheels and $3$-odd-path configurations are the minimal bipartite
graphs whose corresponding $\{0,\pm 1\}$-matrix must contain an odd hole
matrix.

Truemper's minimal obstructions to balanceability for bipartite graphs can
be translated to oriented hypergraphs since the edges of the bipartite
incidence graph $\Gamma _{G}$ correspond to the incidences of the oriented
hypergraph $G$. Any $3$-odd-path configuration in $\Gamma _{G}$ is a
cross-theta in $G$, moreover, since no path of a $3$-odd-path configuration
consists of a single edge, a $3$-odd-path configuration must correspond to a
non-degenerate circle cross-theta of $G$. The inclusion of degenerate
circle cross-thetas in unbalanceable oriented hypergraphs yields all
cross-thetas as an obstruction to balanceability, which simply relaxes the
non-adjacency requirement of a $3$-path configuration.

Truemper's other minimal obstruction is an odd wheel in $\Gamma _{G}$, which
must contain a cross-theta containing the central vertex of the wheel. 
With the inclusion of degenerate circle cross-thetas, all cross-thetas are
already obstructions to balanceability in oriented hypergraphs, and we have
the following theorem:

\begin{theorem}
\label{L60}An oriented hypergraph $G$ is balanceable if, and only if, it
does not contain a cross-theta.
\end{theorem}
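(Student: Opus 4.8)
The plan is to prove Theorem~\ref{L60} by reducing it to Truemper's characterization (Theorem~\ref{L56}) via the incidence graph, carefully accounting for the gap in "purity" identified in the discussion preceding the statement. First I would establish the easy direction: if $G$ contains a cross-theta, then by Lemma~\ref{L49} that cross-theta contains a negative circle, and no negation of incidences can change the parity of the number of negative circles among the three circles of the cross-theta (since negating any single incidence flips the sign of every circle through it, and every circle of the theta passes through an even number of incidences of any given path-pair configuration — more carefully, negating an incidence incident to the apex vertex or edge $e$ flips exactly the circles containing that incidence, and one checks the count of negative circles stays odd). Hence $G$ is unbalanceable. I would phrase this simply by invoking that Theorem~\ref{L49} holds under any reorientation.

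For the converse, suppose $G$ contains no cross-theta; I want to show $G$ is balanceable. The idea is to pass to the oriented incidence graph $\Gamma_G$ and apply Theorem~\ref{L56}. The subtlety is that balance of $\Gamma_G$ as a signed/bipartite-representation graph is not literally the same as balance of $G$, because a $3$-odd-path configuration in $\Gamma_G$ requires the two endpoints to be non-adjacent, and an odd wheel in $\Gamma_G$ has a prescribed structure; meanwhile degenerate-circle cross-thetas in $G$ correspond to these configurations with an extra chord/short path. So I would argue as follows: a cross-theta in $G$ corresponds exactly to three internally disjoint cross-paths in $\Gamma_G$ from a vertex-side vertex $v$ to an edge-side vertex $e$; since $\Gamma_G$ is bipartite these paths all have odd length. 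If $v$ and $e$ are non-adjacent in $\Gamma_G$, this is precisely a $3$-odd-path configuration. If $v$ and $e$ are adjacent, the incidence $(v,e)$ together with the three cross-paths still yields (depending on how the short edge interacts) either an odd wheel or a $3$-odd-path configuration on a slightly smaller vertex set, as the paragraphs before the theorem assert. Conversely, every odd wheel in $\Gamma_G$ contains a cross-theta through its hub (the hub has $\geq 3$ neighbors in the hole $H$, and taking three of them with the two hole-arcs between consecutive chosen neighbors produces three internally disjoint paths; an odd number of such neighbors forces at least one triple of arcs realizing a cross-theta — this is the "must contain a cross-theta containing the central vertex" claim), and every $3$-odd-path configuration in $\Gamma_G$ is itself a cross-theta in $G$. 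Therefore: $G$ cross-theta-free $\iff$ $\Gamma_G$ contains neither an odd wheel nor a $3$-odd-path configuration $\iff$ (by Theorem~\ref{L56}) $\Gamma_G$ is balanceable, meaning its edge-signs can be modified so that every circle is positive; but the edges of $\Gamma_G$ are the incidences of $G$ and reorienting incidences is exactly re-signing edges of $\Gamma_G$, and circles of $\Gamma_G$ correspond bijectively to circles of $G$ with the same sign by Lemma~\ref{L34} and the sign conventions — so $G$ is balanceable.

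I expect the main obstacle to be the bookkeeping in the middle equivalence, namely verifying precisely that an odd wheel in $\Gamma_G$ always carries a cross-theta through its hub, and that the adjacent-endpoints case of a cross-theta in $G$ (a degenerate-circle cross-theta, or one where $(v,e) \in \mathcal I$) reduces to one of Truemper's two forbidden configurations rather than falling outside both. The wheel case requires a small combinatorial argument: with the hub having an odd number $\geq 3$ of neighbors on the hole, the hole is partitioned into that many arcs, and one must select three of these arcs (equivalently three consecutive-neighbor pairs) whose union with the corresponding hub-edges gives three \emph{internally disjoint} cross-paths; choosing three of the neighbors so that the three hub-spokes plus two of the arcs between them form the theta works, and oddness guarantees the resulting circle-sign count cannot be globally corrected. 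The second obstacle is purely expository: making sure the sign-correspondence between "balanceable bipartite graph" in Truemper's sense and "balanceable oriented hypergraph" in our sense is stated cleanly, using Proposition~\ref{L55} and Lemma~\ref{L34}, so that "no odd hole submatrix after re-signing" translates to "all circles positive after reorienting incidences." Everything else is a direct application of the already-established translation dictionary between $G$ and $\Gamma_G$.
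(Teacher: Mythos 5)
Your proposal is correct and follows essentially the same route as the paper: the cross-theta direction via Lemma~\ref{L49}, and the converse by translating Truemper's Theorem~\ref{L56} through the incidence graph, identifying $3$-odd-path configurations with (non-degenerate) cross-thetas, extracting a cross-theta through the hub of an odd wheel, and using Proposition~\ref{L55} together with purity of circles in the cross-theta-free case to match the two notions of balance. The extra detail you supply (the spokes-plus-arcs construction for the wheel and the explicit sign bookkeeping) only fleshes out steps the paper asserts in its discussion preceding the theorem.
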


\begin{corollary}
The multiplicity of any incidence in a balanceable oriented hypergraph is at
most $2$.
\end{corollary}

We have already seen in Lemma \ref{L49} that a cross-theta must contain a
negative circle regardless of its incidence orientations. By developing a
theory of balance for oriented hypergraphs used specifically as a refinement
of being negative-circle-free, degenerate circles are not treated separate from 
other cross-thetas. This adaptation allows us to translate Truemper's work
to see that cross-thetas are the only obstruction to balanceability in
oriented hypergraphs, and the investigation into the minimal dependencies
of oriented hypergraphs has a natural division into three categories:
balanced, balanceable, and unbalanceable.

%%%%%%%%%%%%%
%% %% Section 7 %%%%
%%%%%%%%%%%%%

\section{The Circuit Classification of Balanced Oriented Hypergraphs}
\label{Section7}

\subsection{Balanced Flowers}

The classification of the minimal dependencies of graphs is a well known
result.

\begin{theorem}
\label{L61}The minimal dependencies of a graph are circles.
\end{theorem}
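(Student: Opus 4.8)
The plan is to prove both directions of the equivalence ``$C$ is a minimal dependency of the incidence matrix of a graph $G$ if and only if $C$ is the edge set of a circle'' directly from the structure of $\mathrm{H}_G$ and Lemma~\ref{L5}.

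For the ``if'' direction, suppose $C=v_0,e_1,v_1,\dots ,e_k,v_0$ is a circle. Each column $\mathbf{e}_j$ of $\mathrm{H}_G$ has exactly two nonzero entries, namely $\sigma(v_{j-1},e_j)$ in row $v_{j-1}$ and $\sigma(v_j,e_j)$ in row $v_j$, and since $G$ is a graph (so $e_j$ is a positive $2$-edge) these two entries have opposite signs. Walking around $C$, I would define coefficients $\alpha_j\in\{+1,-1\}$ recursively so that the row-$v_j$ contributions of $\alpha_j\mathbf{e}_j$ and $\alpha_{j+1}\mathbf{e}_{j+1}$ cancel; the recursion closes consistently when we return to $v_0$ precisely because every circle of a graph is positive. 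This exhibits $C$ as a dependency. For minimality, note that any proper nonempty subset $C'\subsetneq C$ edge-induces a forest (a disjoint union of paths), which always has a monovalent vertex; reading off the row of such a vertex (as in the proof of Lemma~\ref{L5}) forces the corresponding coefficient to vanish, and iterating shows the columns indexed by $C'$ are independent. Hence no proper subset of $C$ is a dependency.

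For the ``only if'' direction, let $C$ be a minimal dependency and consider the edge-induced subgraph $G{:}C$. A minimal dependency yields a fully supported solution of $\mathrm{H}_G\mathbf{x}=\mathbf 0$, so by the argument of Lemma~\ref{L5} the graph $G{:}C$ has no monovalent vertex; since edge-induced subgraphs have no isolated vertices either, every vertex of $G{:}C$ has degree at least $2$. A finite graph with minimum degree $\geq 2$ contains a circle (follow a longest path and use that its final vertex has a second neighbour on the path), say with edge set $C'\subseteq C$. By the ``if'' direction already established, $C'$ is itself a dependency, so minimality of $C$ forces $C'=C$. Therefore $C$ is the edge set of a circle.

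The main obstacle I expect is the bookkeeping in the ``if'' direction: making the sign recursion for the $\alpha_j$ precise and checking that ``$C$ positive'' is exactly the condition that closes the recursion (equivalently, that the two nonzero entries of each column having opposite sign is what makes the alternating $\pm 1$ assignment consistent around $C$). The remaining pieces --- forests having monovalent vertices, and a minimum-degree-$\geq 2$ graph containing a circle --- are standard, the only care being to phrase the vanishing-coefficient argument so that it delivers genuine independence of proper sub-configurations rather than merely ``not minimally dependent.''
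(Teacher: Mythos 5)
Your proof is correct, but it is worth noting that the paper itself offers no argument for Theorem~\ref{L61}: it is stated as ``a well known result'' (essentially a restatement of Theorem~\ref{G1} in incidence-matrix language) and is used only as motivation for the hypergraphic generalizations. So your proposal is not a rival to a proof in the paper so much as a self-contained substitute for the citation. Your route is the standard elementary one: for the ``if'' direction you build the kernel vector explicitly by the $\pm 1$ recursion around the circle, and the closure condition is indeed exactly positivity of the circle (which holds automatically since the paper regards a graph as a signed graph with every $2$-edge positive, so each column has one $+1$ and one $-1$); minimality then follows because a proper edge subset of a circle induces a forest, whose columns are independent by iterated leaf removal. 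For the ``only if'' direction your use of the fully supported kernel vector plus the Lemma~\ref{L5} argument to force minimum degree $2$, hence a circle, hence equality by minimality, is sound. It is interesting to compare this with how the paper proves the generalizations it does prove: in Theorems~\ref{L64} and~\ref{L69a} the dependency is obtained not by exhibiting a kernel vector but by a rank count (zeroing out one row per essential circle and computing the nullity via the cyclomatic number), and the ``only if'' structure analysis in Theorem~\ref{L70} uses exactly your degree-$\geq 2$ observation via Lemma~\ref{L5}. Your explicit-coefficient argument is more elementary and gives the circuit's characteristic vector directly, while the paper's rank/cyclomatic-number method is what scales to flowers and hypercircles where an explicit kernel vector is harder to write down. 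The only loose ends in your write-up are cosmetic: you should fix the degenerate cases (loops and, in the paper's broader setting, $0$-edges) or restrict to simple graphs, and actually display the recursion $\alpha_{j+1}=-\alpha_j\sigma(v_j,e_j)\sigma(v_j,e_{j+1})$ together with the observation that the closure obstruction is the product of the adjacency signs, i.e.\ the sign of the circle.
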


Using Proposition \ref{L24}, we can translate Theorem \ref{L61} using
oriented hypergraphic terminology that is indicative of the dependency
results we will obtain for balanced oriented hypergraphs.

\begin{theorem}
The minimal dependencies of a graph are balanced flowers.
\end{theorem}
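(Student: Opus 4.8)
The plan is to obtain this statement as an immediate consequence of Theorem~\ref{L61}, Proposition~\ref{L24}, and the observation already recorded in Section~\ref{Section1} that every circle of a graph is positive. The content is entirely carried by those earlier results; the work is just translating terminology.

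First I would recall that a graph is precisely a signed graph in which every edge is positive and which has no loose or half edges. By Proposition~\ref{L24}, the flowers of such a signed graph are exactly its circles, the ``loose edge'' alternative being vacuous since a graph in the classical sense contributes no $0$-edges. Next I would check that each such flower is balanced: a circle, regarded as an oriented hypergraph, contains no circle other than itself, and since every edge of a graph is positive, the product of the signs of the edges of that circle — equivalently the product of its adjacency signs — is $+1$, so it is positive. Hence every flower of a graph is balanced, and conversely a balanced flower of a graph is in particular a flower, hence a circle by Proposition~\ref{L24}. Thus, over graphs, ``circle'' and ``balanced flower'' describe the same objects.

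Combining this equivalence with Theorem~\ref{L61}, which says the minimal dependencies of a graph are exactly its circles, gives that the minimal dependencies of a graph are exactly its balanced flowers.

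The only point requiring attention is the degenerate bookkeeping: one must confirm that the graphs under consideration carry no loose edges, so that Proposition~\ref{L24} identifies flowers with circles outright; if loose edges were admitted, they would also be balanced flowers and (as zero columns) minimal dependencies, so the statement would still hold, but restricting to genuine graphs keeps the correspondence cleanest. There is no substantive obstacle — this is a corollary, stated in the new vocabulary precisely because it foreshadows the balanced-oriented-hypergraphic classification to follow.
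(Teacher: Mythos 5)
Your proposal is correct and matches the paper's intent exactly: the paper treats this statement as an immediate translation of Theorem~\ref{L61} via Proposition~\ref{L24}, together with the observation (made in Section~\ref{Section1}) that every circle of a graph, viewed as an all-positive signed graph, is positive, hence every graphic flower is balanced. Your handling of the loose-edge case is the only bookkeeping needed, and you dispose of it appropriately.
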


The focus in this section is on the extension of Theorem \ref{L61} to
oriented hypergraphs by examining balanced flowers. 

\begin{lemma}
\label{L62}A balanced flower does not contain a vertex of degree $\geq 3$.
\end{lemma}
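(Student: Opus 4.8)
The plan is to derive a contradiction from the existence of a balanced flower $F$ containing a vertex $v$ of degree $\geq 3$. The natural first move is to invoke Theorem~\ref{L50}: since $F$ is a flower with a vertex of degree $\geq 3$, it must contain a cross-theta. Then I would apply Lemma~\ref{L49}, which tells us that every cross-theta contains an odd number of negative circles; in particular it contains at least one negative circle. But a negative circle in $F$ is precisely the failure of balance, so $F$ is not balanced — contradiction. Thus a balanced flower cannot have a vertex of degree $\geq 3$.

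In slightly more detail, the argument runs as follows. Suppose for contradiction that $F$ is a balanced flower and $v\in V(F)$ with $\deg(v)\geq 3$. By Theorem~\ref{L50}, $F$ contains a cross-theta, say on cross-paths $P_1,P_2,P_3$ with common end-points a vertex and an edge $e$. By Lemma~\ref{L49}, the three circles $C_{12},C_{13},C_{23}$ formed by pairs of these cross-paths together with $e$ contain an odd number of negative circles; since odd numbers are nonzero, at least one $C_{ij}$ is negative. That circle $C_{ij}$ lies in $F$, contradicting the hypothesis that every circle of $F$ is positive. Hence no such $v$ exists.

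I expect this to be almost immediate given the machinery already assembled — the real work was done in Theorem~\ref{L50} (degree $\geq 3$ in a flower forces a cross-theta) and Lemma~\ref{L49} (a cross-theta forces a negative circle). The only mild subtlety, and the one place to be slightly careful, is that one should note the cross-theta supplied by Theorem~\ref{L50} is an honest subconfiguration of $F$, so the negative circle it produces is genuinely a circle of $F$ (not of some auxiliary graph); this is immediate since the proof of Theorem~\ref{L50} exhibits the three cross-paths inside $F$ itself. There is no case analysis needed here beyond what is already packaged inside Lemma~\ref{L49}, so the proof is short.
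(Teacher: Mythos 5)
Your proof is correct and follows exactly the paper's argument: invoke Theorem~\ref{L50} to get a cross-theta from the degree-$\geq 3$ vertex, then Lemma~\ref{L49} to extract a negative circle, contradicting balance. Nothing further is needed.
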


\begin{proof}
If a balanced flower had a vertex of degree $\geq 3$ then by Theorem \ref{L50} 
it would contain a cross-theta, and by Lemma \ref{L49} would contain a negative circle.
\end{proof}

Lemma \ref{L62} incorporates Theorem \ref{L50}\ into the theory of balanced
oriented hypergraphs, and has an immediate result paralleling Corollary \ref%
{L51}.

\begin{lemma}
\label{L63}The degree of every vertex in a balanced flower must be $2$.
\end{lemma}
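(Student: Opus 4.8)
The plan is to combine Lemma \ref{L62} with the basic structural requirement that a flower is inseparable, exactly mirroring the proof of Corollary \ref{L51}. First I would invoke Lemma \ref{L62} to conclude that no vertex of a balanced flower can have degree $3$ or greater; hence every vertex has degree at most $2$. Next I would rule out degrees $0$ and $1$: a flower is by definition an inseparable circle-covered oriented hypergraph, so by Lemma \ref{L22} (or directly from inseparability) it contains no monovalent or isolated vertices. Combining these two observations forces the degree of every vertex to be exactly $2$.

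The only subtlety is the degenerate base case: a flower may be a single $0$-edge (per the definition of circle-covered). In that case there are no vertices at all, so the statement "the degree of every vertex is $2$" holds vacuously, and I would note this explicitly. Otherwise the flower genuinely contains a circle, and then Lemma \ref{L22} applies to exclude monovalent and isolated vertices.

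I do not expect any real obstacle here — the lemma is essentially an immediate corollary of Lemma \ref{L62} together with the already-established Corollary \ref{L51}'s argument, now specialized to the balanced setting. If anything, the "hard" part was already done in proving Theorem \ref{L50}/Lemma \ref{L62}; what remains is just assembling the pieces. A concrete write-up: \emph{By Lemma \ref{L62}, a balanced flower has no vertex of degree $\geq 3$, so every vertex has degree at most $2$. Since a flower is inseparable and circle-covered, by Lemma \ref{L22} it has no monovalent or isolated vertices (the case of a single $0$-edge is vacuous). Therefore every vertex has degree exactly $2$.}
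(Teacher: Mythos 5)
Your proof is correct and follows essentially the same route as the paper: the paper simply notes that a balanced flower is cross-theta-free and cites Corollary \ref{L51}, whereas you unpack that corollary's argument yourself (degree $\leq 2$ via Lemma \ref{L62}, degrees $0$ and $1$ excluded by circle-coveredness/Lemma \ref{L22}), which is the same chain of ideas resting on Theorem \ref{L50} and Lemma \ref{L49}. Your explicit handling of the $0$-edge case is a harmless extra precision not spelled out in the paper.
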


\begin{proof}
From Corollary \ref{L51} we know that every vertex of a cross-theta-free
flower must have degree equal to $2$. \ A balanced flower is necessarily
cross-theta-free.
\end{proof}

Observe that Lemma \ref{L63}\ implies that the incidence dual of a balanced
flower is a signed graph.

Finally, we arrive at our first family of minimally dependent oriented
hypergraphs.

\begin{theorem}
\label{L64}A balanced flower is minimally dependent.
\end{theorem}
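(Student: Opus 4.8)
The plan is to reduce a balanced flower to a signed graph via incidence duality and then invoke the classical graphic/signed-graphic dependency theory together with the invariance lemmas already established. First I would recall that by Lemma \ref{L63} every vertex of a balanced flower $F$ has degree exactly $2$. Taking the incidence dual $F^{\ast}$, this says that every \emph{edge} of $F^{\ast}$ has size exactly $2$, so $F^{\ast}$ is a signed graph (possibly with half edges and loose edges, though a flower's inseparability rules out monovalent vertices and $1$-edges by Lemma \ref{L22}, so $F^{\ast}$ has no half edges; loose edges only arise from isolated vertices of $F$, which a flower excludes unless $F$ is a single $0$-edge, the trivial $1$-hypercircle case which must be handled separately or excluded). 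By Lemma \ref{L7}, $F^{\ast}$ is circle-covered and minimal in the edge-induced ordering, i.e.\ $F^{\ast}$ is a flower of a signed graph; by Proposition \ref{L24} this forces $F^{\ast}$ to be a single circle (or a loose edge). Again by Lemma \ref{L7}(2), the sign of that circle equals the sign of the corresponding circle in $F$, which is positive since $F$ is balanced.

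Next I would translate the dependency question through incidence duality. The key observation is that taking the incidence dual transposes the incidence matrix: $\mathrm{H}_{F^{\ast}} = \mathrm{H}_{F}^{\top}$ up to the sign conventions in the definition of $\sigma^{\ast}$, since $\sigma^{\ast}(e,v,k)=\sigma(v,e,k)$. So "the columns of $\mathrm{H}_F$ are minimally dependent" corresponds to a statement about the rows of $\mathrm{H}_{F^{\ast}}$. This is the step I expect to be the main obstacle: the paper works with \emph{column} dependencies throughout, and column dependencies of $\mathrm{H}_F$ are not literally row dependencies of $\mathrm{H}_{F^{\ast}} = \mathrm{H}_F^{\top}$ in a way that lets one directly quote Zaslavsky's Theorem \ref{S1}, which is itself a statement about columns (edges) of a signed incidence matrix. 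I would need to argue that for the specific structure at hand—a balanced signed-graphic circle—the row rank deficiency and column rank deficiency coincide and are both exactly $1$, witnessed by a fully supported vector. Concretely, a positive circle in a signed graph, viewed as a signed graph on $k$ vertices and $k$ edges, has a singular incidence matrix (by Theorem \ref{S1} its $k$ edges form a circuit of the frame matroid, hence its $k$ columns are minimally dependent), and being square with a one-dimensional column null space it also has a one-dimensional row null space; moreover, minimality of the column dependency (full support of the null vector) transfers to the row side because deleting any row of a square matrix whose rows are dependent but whose every proper set of rows is independent... — here I would instead argue directly that the balanced circle's incidence matrix, after switching all edges positive (permissible by Lemma \ref{L11}, \ref{L13}), is the incidence matrix of an ordinary graph cycle, whose transpose is again (up to relabeling) the incidence matrix of a graph cycle, so full-support singularity on both sides is the classical fact that a cycle's incidence matrix has rank $k-1$.

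Assembling the pieces: given the balanced flower $F$, switch edges so its dual circle is all-positive (Lemmas \ref{L11}, \ref{L13} guarantee this changes neither minimal dependence nor balance), so that $\mathrm{H}_F$ is, up to row/column relabeling and the transpose, the signed incidence matrix of a graph cycle $C_k$. The incidence matrix of $C_k$ has rank $k-1$ with a nowhere-zero null vector $(1,-1,1,\dots)$ read around the cycle, and its transpose has the same property. Translating back, $\mathrm{H}_F$ has a fully supported null vector and rank $|E_F|-1$, so the columns of $\mathrm{H}_F$ are dependent, and by the full support every proper subset of columns is independent; hence $F$ is minimally dependent. The degenerate case $F$ a single $0$-edge gives a zero column, which is trivially minimally dependent. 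I would also remark that an alternative, more self-contained route avoids the dual entirely: by Lemma \ref{L63} and Corollary \ref{L51}, a balanced flower is a cross-theta-free flower with all vertices divalent, so its incidence graph is a single circle in which the $E_F$-vertices and $V_F$-vertices alternate; one can then write down the null vector of $\mathrm{H}_F$ explicitly by walking around this circle and alternating signs, using balance (positivity of the circle) exactly to guarantee the walk closes up consistently—this is the cleanest argument and I would likely present it as the main proof, keeping the duality remark as motivation for why the answer "should" be a circle-like object.
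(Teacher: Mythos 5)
There is a genuine gap, and it is the structural claim on which both of your routes rest: that a balanced flower is essentially a circle. From Lemma \ref{L63} you correctly get that every vertex of a balanced flower $F$ is divalent, so the incidence dual $F^{\ast}$ is a signed graph; but your next step, ``$F^{\ast}$ is a flower of a signed graph, hence a circle by Proposition \ref{L24},'' does not follow. Being a flower means minimal in the \emph{edge-induced} ordering, and incidence duality exchanges edges and vertices, so minimality is not preserved: $F^{\ast}$ is an inseparable balanced signed graph, but not in general a circle. Concretely, take the incidence dual of $K_4$: six divalent vertices, four edges of size three. It is inseparable, every proper edge-induced subhypergraph has a monovalent vertex, so it is a flower; it is cross-theta-free (a cross-theta forces a vertex of degree at least $3$, cf.\ Theorem \ref{L50}), hence balanceable by Theorem \ref{L60}, and with a balanced orientation it is a balanced flower whose dual is $K_4$, not a circle. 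The same example defeats your ``self-contained'' variant: the incidence graph of a balanced flower is not a single circle (here it is a subdivision of $K_4$, with the edge-side vertices of degree $3$), so there is no single closed walk along which an alternating $\pm 1$ null vector can be written down. The divalence given by Lemma \ref{L63} and Corollary \ref{L51} constrains only the vertex side; edges of a balanced flower can have arbitrary size, which is exactly the hypergraphic phenomenon the theorem is about.

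The paper's proof handles this general case by working in the incidence graph without assuming it is a circle: it takes a spanning tree of $\Gamma_F$, translates its fundamental circles into a collection of essential circles of $F$ (all pure and positive by balance), uses Corollary \ref{L54} to pick a distinct representative vertex for each essential circle, and zeroes out exactly $\varphi_F$ rows, giving row rank $\left\vert V_F\right\vert-\varphi_F$; the identity $\left\vert \mathcal{I}_F\right\vert=2\left\vert V_F\right\vert$ then yields $\left\vert V_F\right\vert-\varphi_F=\left\vert E_F\right\vert-1$, i.e.\ nullity $1$, and minimality follows because weak deletion of any nonempty edge set leaves a monovalent vertex (Lemma \ref{L5}). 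Your duality instinct is not hopeless --- the paper itself remarks that Theorem \ref{L64} can be proved via signed graphs --- but to make it work you would have to quote the rank of the incidence matrix of a connected \emph{balanced signed graph} (namely $\left\vert V\right\vert-1$), applied to the possibly non-circular signed graph $F^{\ast}$, and then transfer rank through transposition; reducing to the incidence matrix of a graph cycle $C_k$ only covers the case where $F$ is itself a signed-graphic circle, which is your Case 2, not the theorem.
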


\begin{proof}
Let $F$ be a balanced flower, and observe that a flower cannot contain a $1$-edge.

\textit{Case 1:} \ If $F$ is a $0$-edge, then it corresponds to a single
column of $0$'s and is minimally dependent.

\textit{Case 2:} \ If $F$ consists of only edges of size $2$, then it is
already minimally dependent as it is a positive signed graphic circle.

\textit{Case 3:} \ If $F$ contains an edge of size $3$ or greater, then we
will show that it is minimally dependent.

Let $\mathrm{H}_{F}$ be the incidence matrix of $F$ and let $\Gamma _{F}$ be
the incidence graph of $F$. \ Take a spanning tree of $\Gamma _{F}$ to
determine a collection of essential circles of $F$ by translating the
fundamental circles of $\Gamma _{F}$ to $F$. \ The sign of each essential
circle is $+$ since $F$ is balanced.

Since each essential circle is pure and positive we can take a linear
combination of the rows corresponding to the vertices of that circle to zero
out any row in the square submatrix corresponding to the vertices and edges
of the circle. \ Moreover, we know that every vertex has degree $2$ by Lemma %
\ref{L63}, thus there are no other non-zero entries in the rows of $\mathrm{H%
}_{F}$ outside of the square submatrix corresponding to the circle, and the
entire row in $\mathrm{H}_{F}$ must be zero after row reducing.

Corollary \ref{L54} tells us there is a unique vertex for each essential
circle that does not belong to any other essential circle in the given
collection. For each essential circle take a linear combination of the
rows corresponding to the vertices of that circle so that a row
corresponding to a vertex unique to that essential circle is zero. Since
this vertex is not contained in any other essential circles we can zero out
a row for each essential circle. Thus we can zero out exactly $\varphi
_{F} $ rows since the essential circles of $F$ are fundamental circles of $%
\Gamma _{F}$. Since $\mathrm{H}_{F}$ has $\left\vert V_{F}\right\vert $
rows and we can zero out exactly $\varphi _{F}$ of them to see that the row
rank of $\mathrm{H}_{F}$ is $\left\vert V_{F}\right\vert -\varphi _{F}$.

For $F$ to be minimally dependent the nullity of $\mathrm{H}_{F}$ must
necessarily be $1$. If the nullity of $\mathrm{H}_{F}$ is $1$, then $F$ is
minimally dependent since the weak deletion of any non-empty subset of edges
of $F$ would result in a monovalent vertex since $F$ is minimally
circle-covered, and would not be minimally dependent by Lemma \ref{L5}. 
Since no edge-induced subhypergraph is minimally dependent and the nullity
of $F$ is $1$, then $F$ must be minimally dependent.

In order to complete the proof we must show that $\mathrm{H}_{F}$ has
nullity $1$. Since we know the row rank is $\left\vert V_{F}\right\vert
-\varphi _{F}$ we must show
\begin{equation*}
\left\vert V_{F}\right\vert -\varphi _{F}=\left\vert E_{F}\right\vert -1%
\text{.}
\end{equation*}
Solving for $\varphi _{F}$ this is equivalent to showing 
\begin{equation*}
\varphi _{F}=\left\vert V_{F}\right\vert -\left\vert E_{F}\right\vert +1%
\text{.}
\end{equation*}

By the definition of the cyclomatic number we have 
\begin{equation*}
\varphi _{F}=\left\vert \mathcal{I}_{F}\right\vert -(\left\vert
V_{F}\right\vert +\left\vert E_{F}\right\vert )+1\text{.}
\end{equation*}%
However, since the degree of every vertex of $F$ is equal to $2$ we have $
\left\vert \mathcal{I}_{F}\right\vert =2\left\vert V_{F}\right\vert $. \
Replacing this into the cyclomatic number we get%
\begin{eqnarray*}
\varphi _{F} &=&\left\vert \mathcal{I}_{F}\right\vert -(\left\vert
V_{F}\right\vert +\left\vert E_{F}\right\vert )+1 \\
&=&2\left\vert V_{F}\right\vert -(\left\vert V_{F}\right\vert +\left\vert
E_{F}\right\vert )+1 \\
&=&\left\vert V_{F}\right\vert -\left\vert E_{F}\right\vert +1\text{.}
\end{eqnarray*}

Solving this for $\left\vert V_{F}\right\vert -\varphi _{F}$ we get 
\begin{equation*}
\left\vert V_{F}\right\vert -\varphi _{F}=\left\vert E_{F}\right\vert -1
\text{,}
\end{equation*}%
and the nullity of $\mathrm{H}_{F}$ is equal to $1$.
\end{proof}

Note that Theorem \ref{L64} can be proved using signed graph theory since the incidence dual of a balanced oriented hypergraph is a signed graph.

\subsection{Balanced Pseudo-Flowers}

A pseudo-flower is a result from abstracting to hypergraphs. Since
balanced flowers are minimally dependent, we examine the balanced
flower-parts for a similar simplification locally within each pseudo-flower.

There is only a single signed graphic example of a balanced minimal
dependency involving pseudo-flowers: two $1$-edges connected by a path of
length $\geq 0$. Each $1$-edge is a pseudo-flower whose flower-part is the 
$0$-edge resulting from weak deletion of the vertex. It is natural to ask if an 
oriented hypergraph consisting of a single $k$-edge (or $k$-artery via subdivision) 
with a $1$-edge at each vertex is minimally dependent. Clearly it is, since there 
is a single column containing no zeroes and exactly one column for each row such 
that a linear combination of $1$-edge-columns yields the $k$-edge-column.

Oriented hypergraphs, however, can have more pseudo-flowers than just $1$-edge 
pseudo-flowers.

\begin{lemma}
\label{L65}Let $P_{1}$, $P_{2}$, . . . , $P_{k}$ be a collection of $k$,
disjoint, balanced\ $1$-pseudo-flowers, and $e$ a $k$-edge that meets only
the thorn of each $P_{i}$. \ The oriented hypergraph $G:=P_{1}\cup P_{2}$ $%
\cup $ . . . $\cup $ $P_{k}\cup e$ is minimally dependent.
\end{lemma}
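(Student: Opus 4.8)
The plan is to read off the block structure of the incidence matrix $\mathrm{H}_G$ and reduce minimal dependence of $G$ to the minimal dependence of the individual flower-parts. First I would record the local picture: since $P_i$ is a balanced $1$-pseudo-flower, weak-deleting its unique thorn $t_i$ yields the flower-part $F_i$, the edge of $P_i$ containing $t_i$ is the briar $b_i$, which survives as an edge of $F_i$, and because a monovalent vertex lies on no circle the circles of $P_i$ and of $F_i$ coincide. Hence each $F_i$ is a \emph{balanced} flower and, by Theorem~\ref{L64}, is minimally dependent; fix a null vector $\mathbf{x}^{(i)}$ of $\mathrm{H}_{F_i}$, which by minimality is unique up to scaling and has full support, so in particular $x^{(i)}_{b_i}\neq 0$. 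Because the $P_i$ are pairwise disjoint and $e$ meets only the thorns $t_1,\dots,t_k$ (each with multiplicity one), the rows of $\mathrm{H}_G$ partition as $\bigsqcup_i V_{F_i}\sqcup\{t_1,\dots,t_k\}$ and the columns as $\bigsqcup_i E_{F_i}\sqcup\{e\}$; a row $v\in V_{F_i}$ agrees with the corresponding row of $\mathrm{H}_{F_i}$, carrying a $0$ in column $e$, the thorn row $t_i$ has exactly the two nonzero entries $\sigma(t_i,b_i)$ in column $b_i$ and $\sigma(t_i,e)$ in column $e$, and column $e$ is supported only on the thorn rows.

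Next I would build an explicitly fully supported dependency. Put the coefficient $1$ on column $e$; the thorn-row equation $y_{b_i}\sigma(t_i,b_i)+\sigma(t_i,e)=0$ forces $y_{b_i}=-\sigma(t_i,e)\sigma(t_i,b_i)\neq 0$; then on the columns $E_{F_i}$ take $\lambda_i\mathbf{x}^{(i)}$ with $\lambda_i:=y_{b_i}/x^{(i)}_{b_i}\neq 0$. Row-by-row this vector $\mathbf{y}$ lies in $\ker\mathrm{H}_G$: each thorn row vanishes by the choice of $y_{b_i}$, and each row $v\in V_{F_i}$ vanishes since it sees only the columns of $E_{F_i}$ and there produces $\lambda_i\sum_{f\in E_{F_i}}x^{(i)}_f(\mathrm{H}_{F_i})_{v,f}=0$. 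All of the coefficients $1$, $y_{b_i}$, and $\lambda_i x^{(i)}_f$ are nonzero, so $\mathbf{y}$ is a fully supported null vector and $G$ is dependent.

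For minimality I would check that $\dim\ker\mathrm{H}_G=1$. Let $\mathbf{z}$ be any null vector. The thorn row $t_i$ gives $z_{b_i}=-z_e\,\sigma(t_i,e)\sigma(t_i,b_i)$, so each $z_{b_i}$ is determined by $z_e$. Restricting $\mathrm{H}_G\mathbf{z}=\mathbf{0}$ to the rows $V_{F_i}$ annihilates every column outside $E_{F_i}$ --- this is exactly where disjointness of the $P_i$ and the fact that $e$ only meets the thorns are used --- leaving $\mathrm{H}_{F_i}\,(z_f)_{f\in E_{F_i}}=\mathbf{0}$; minimal dependence of $F_i$ then forces $(z_f)_{f\in E_{F_i}}=\mu_i\mathbf{x}^{(i)}$, and reading off the $b_i$-coordinate pins $\mu_i=z_{b_i}/x^{(i)}_{b_i}$. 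Thus $\mathbf{z}$ is determined by the single scalar $z_e$, so $\dim\ker\mathrm{H}_G\le 1$, and with the previous paragraph it equals $1$. Finally, were a proper nonempty set of columns of $\mathrm{H}_G$ dependent, its relation would extend by zeros to a nonzero null vector of $\mathrm{H}_G$ that is not fully supported, contradicting that every nonzero element of the one-dimensional $\ker\mathrm{H}_G$ is a scalar multiple of the fully supported $\mathbf{y}$. Hence no proper edge-subset is dependent and $G$ is minimally dependent.

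The one step I expect to be the real content, rather than bookkeeping, is the reduction inside the last paragraph: showing that restricting the global relation to the rows $V_{F_i}$ yields precisely a relation among the columns of $\mathrm{H}_{F_i}$, so that Theorem~\ref{L64} (and the full support it provides) can be applied locally. Everything else is routine block manipulation of $\{0,\pm 1\}$-matrices.
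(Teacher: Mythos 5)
Your proof is correct, and it takes a genuinely different route from the paper's. The paper never decomposes the matrix: it argues globally on $G$ that every vertex has degree $2$ (thorns included, because of $e$), computes $\varphi_G=\sum_i\varphi_{P_i}$, zeroes out $\varphi_G$ rows using essential circles of the flower-parts exactly as in the proof of Theorem~\ref{L64}, deduces from $\left\vert \mathcal{I}_G\right\vert =2\left\vert V_G\right\vert$ that the row rank is $\left\vert E_G\right\vert -1$ and the nullity is $1$, and then gets minimality structurally: weak deletion of any nonempty edge set disconnects $G$ or creates a monovalent vertex, so no proper edge-induced subhypergraph is minimally dependent. You instead treat Theorem~\ref{L64} as a black box applied to each flower-part $F_i$ (your observation that the circles of $P_i$ and $F_i$ coincide, so each $F_i$ is a balanced flower, is the right justification), exploit the block structure of $\mathrm{H}_G$ forced by disjointness of the $P_i$ and the fact that $e$ meets only the thorns, exhibit an explicit fully supported null vector, and bound $\dim\ker\mathrm{H}_G\le 1$ by propagating $z_e$ through the thorn rows into each block. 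What your version buys is an explicit kernel generator and a purely linear-algebraic proof of minimality (every nonzero kernel vector has full support), sidestepping the paper's deletion argument and its disconnection case; what the paper's counting template buys is that it transfers essentially verbatim to Theorem~\ref{L69a}, where pseudo-flowers are glued along shared briars and the clean ``disjoint blocks plus one extra column'' structure you rely on is no longer available. Within the scope of Lemma~\ref{L65} itself, your argument is complete.
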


\begin{proof}
Each pseudo-flower\ $P_{i}$ contains a balanced flower-part, thus the degree
of every vertex must be equal to $2$. \ For each $P_{i}$ there are $\varphi
_{P_{i}}$ essential circles so there are $\varphi _{P_{i}}$ rows that can be
zeroed out since the flower-part is balanced, and must contain only vertices
of degree equal to $2$.

Since there are no circles in $G$ other than those in the flower parts of
each $P_{i}$ we have 
\begin{eqnarray*}
\varphi _{G} &=&\sum\limits_{i=1}^{k}\varphi _{P_{i}} \\
&=&\left\vert \mathcal{I}_{G}\right\vert -(\left\vert V_{G}\right\vert
+\left\vert E_{G}\right\vert )+1\text{.}
\end{eqnarray*}

Since the degree of every vertex in $G$ is $2$ we have $\left\vert \mathcal{I%
}_{G}\right\vert =2\left\vert V_{G}\right\vert $. \ Substituting into $%
\varphi _{G}$ we get%
\begin{eqnarray*}
\varphi _{G} &=&\left\vert \mathcal{I}_{G}\right\vert -(\left\vert
V_{G}\right\vert +\left\vert E_{G}\right\vert )+1 \\
&=&2\left\vert V_{G}\right\vert -(\left\vert V_{G}\right\vert +\left\vert
E_{G}\right\vert )+1 \\
&=&\left\vert V_{G}\right\vert -\left\vert E_{G}\right\vert +1\text{.}
\end{eqnarray*}%
Solving for $\left\vert V_{G}\right\vert -\varphi _{F}$ we get 
\begin{equation*}
\left\vert V_{G}\right\vert -\varphi _{F}=\left\vert E_{G}\right\vert -1%
\text{.}
\end{equation*}

That is, the row rank of the incidence matrix of $G$ is $\left\vert
E_{G}\right\vert -1$, so $G$ is dependent with nullity equal to $1$.

To see that $G$ is minimally dependent observe that the weak deletion of any
non-empty subset of edges would either disconnect $G$ or result in a
monovalent vertex, and is not minimally dependent. Since no edge-induced 
subhypergraph is of $G$ is minimally dependent, $G$ must be minimally dependent.
\end{proof}

The thorns of each pseudo-flower in Lemma \ref{L65} can be switched so that
every thorn is compatibly oriented with respect to edge $e$. Vertex-contracting 
these thorns produces a collection of $k$ adjacent pseudo-flowers sharing a single 
common isthmus. Moreover, this is obtained by operations that do not alter minimal 
dependencies, giving us the following corollary:

\begin{corollary}
\label{L66}A balanced hypercircle with a single isthmus is minimally
dependent.
\end{corollary}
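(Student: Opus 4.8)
The plan is to read the corollary as the converse of the construction given just before it, and to reduce it to Lemma~\ref{L65} together with the invariance of minimal dependence under vertex-switching (Lemma~\ref{L11}) and under $2$-vertex-contraction (Lemma~\ref{L14}). Given a balanced hypercircle $G$ with a single isthmus, I want to produce a hypergraph $G'=P_{1}\cup\cdots\cup P_{k}\cup e$ of exactly the shape hypothesized in Lemma~\ref{L65}, such that $G$ is obtained from $G'$ by $k$ compatible $2$-vertex-contractions. Lemma~\ref{L65} then gives that $G'$ is minimally dependent, and Lemma~\ref{L14} pushes this down, one contraction at a time, to $G$.

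First I would unwind the definition of a hypercircle. By definition $G$ is the vertex-contraction of the arteries of a floral thorn-connection, each artery collapsing (Lemma~\ref{L32}) to a single edge, and after this contraction the flower-parts are pairwise adjacent, meaning each pair shares a briar that is an isthmus of their union. A flower has no isthmus, so a hypercircle with exactly one isthmus must be a $k$-hypercircle with $k\ge 2$ in which \emph{all} of the flower-parts $F_{1},\dots,F_{k}$ meet along a single common briar $e^{*}$. Deleting $e^{*}$ then splits $G$ into exactly $k$ pieces $D_{1},\dots,D_{k}$, the incidences of $e^{*}$ fall into groups $S_{1},\dots,S_{k}$ with $S_{i}$ at vertices of $D_{i}$, and $F_{i}$ is $D_{i}$ together with the restriction of $e^{*}$ to $S_{i}$. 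I now build $G'$ by performing $k$ successive compatible subdivisions of $e^{*}$: the $i$-th pinches the group $S_{i}$ onto its own edge $\mathrm{briar}_{i}$ and introduces a new vertex $t_{i}$, until what remains of $e^{*}$ is a $k$-edge $e$ incident precisely to $t_{1},\dots,t_{k}$. Then $P_{i}:=D_{i}\cup\mathrm{briar}_{i}$ is a $1$-pseudo-flower with flower-part $F_{i}$, since $\mathrm{briar}_{i}$ lies in a circle of $F_{i}$ (in the degenerate signed-graphic case $F_{i}$ is a $0$-edge and $P_{i}$ a $1$-edge, recovering ``two $1$-edges joined by a path''); it is balanced because its circles are circles of the balanced $G$; the $P_{i}$ are disjoint; and $e$ meets only the thorns $t_{1},\dots,t_{k}$. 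Hence $G'$ meets the hypotheses of Lemma~\ref{L65}, so it is minimally dependent, and contracting the compatibly oriented vertices $t_{1},\dots,t_{k}$ recovers $G$, which is therefore minimally dependent by $k$ applications of Lemma~\ref{L14}.

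The step I expect to be the real obstacle is the structural one in the second paragraph --- verifying that a single isthmus forces exactly this shape, with all flower-parts hanging off one common briar $e^{*}$ and nothing extra --- which comes down to tracing condition AC5 and the adjacent-pseudo-flower clause in the definition of a thorn-connection, checking that each flower-part attaches along a genuine briar, and confirming that every edge other than $e^{*}$ sits in a circle of some flower-part. Everything afterwards is routine: the subdivisions are chosen compatible, so the $2$-vertex-contractions undoing them are compatible; minimal dependence of $G'$ is Lemma~\ref{L65}; Lemma~\ref{L14} propagates it through each contraction to $G$; and Lemma~\ref{L11} handles any vertex-switching used along the way.
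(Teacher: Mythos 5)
Your proposal is correct and follows essentially the same route as the paper: the paper obtains this corollary directly from Lemma~\ref{L65} by switching the thorns to be compatibly oriented with $e$ and then vertex-contracting them (operations that preserve minimal dependence via Lemmas~\ref{L11} and~\ref{L14}), which is exactly your reduction run in the forward direction. Your version merely makes explicit the structural step the paper leaves implicit --- that a single-isthmus balanced hypercircle is precisely the contraction of such a configuration --- and that verification is sound.
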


A cautionary note concerning Corollary \ref{L66}: vertex-contracting a
thorn-connection into a hypercircle may not be possible in the larger
ambient oriented hypergraph, so any comparisons between thorn-connections
and hypercircles must be done on the edge-induced subhypergraph in order to
examine the structure of minimal dependencies. In other words, we must restrict 
to a specific set of columns when searching for dependency.

It is important to point out that the subdivision of an isthmus in a
balanced hypercircle does not need to be compatible to preserve the minimal
dependency of that hypercircle since any newly created vertex will not
belong to any circle in the subdivision. It could, however, alter another
minimal dependency in a larger ambient oriented hypergraph in which the
subdivided edge belongs to a circle.

A subdivision of $G$ is \emph{balanced} if the subdivision is compatible, or
the subdivision is incompatible and the newly created vertex does not belong
to a circle in the subdivision of $G$.

\begin{lemma}
\label{L67}A subdivision $H$ of $G$ is balanced if, and only if, the circles
of $H$ corresponding to circles of $G$ have the same sign in both $G$ and $H$.
\end{lemma}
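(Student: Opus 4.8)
The plan is to argue the equivalence by isolating exactly what an incompatible subdivision can do to circle signs, and matching that against the two clauses of the definition of a balanced subdivision. First I would recall the relevant setup from Section~\ref{Section3}: subdividing an edge $e$ of $G$ into $e_1,e_2$ creates a new degree-$2$ vertex $u$, and by Lemma~\ref{L16} and Corollary~\ref{L17}, if the subdivision is compatible then every circle of $G$ retains its sign in $H$, and no genuinely new circles arise through $u$ (since $u$ is compatibly oriented it can be immediately contracted back). So in the compatible case both sides of the claimed equivalence hold trivially: $H$ is balanced by definition, and every circle of $H$ coming from a circle of $G$ has the same sign. This disposes of one direction whenever the subdivision is compatible.

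Next I would handle the incompatible case, which is where the content lies. Suppose the subdivision is incompatible, so $u$ is \emph{not} compatibly oriented; the two new incidences at $u$ have product $+1$. I would split on whether $u$ lies on a circle of $H$. If $u$ does not lie on any circle of $H$, then (i) $H$ is balanced by the second clause of the definition, and (ii) every circle of $H$ is a circle avoiding $u$, hence avoiding at least one of $e_1,e_2$; such a circle is, after identifying the surviving half-edge with $e$, literally a circle of $G$, and its sign is unchanged because the incidences along it are untouched by the subdivision (edge-switching, if needed to make the bookkeeping clean, does not alter adjacency signs by Lemma~\ref{L12}). So again both sides hold. If instead $u$ \emph{does} lie on a circle $C$ of $H$, then $C$ passes through both $e_1$ and $e_2$; contracting $e_1$ and $e_2$ (equivalently, undoing the subdivision) turns $C$ into a circle $C_0$ of $G$, but because $u$ was incompatibly oriented, the computation in the proof of Lemma~\ref{L16} runs with the opposite sign: the two new adjacency contributions multiply to $[\sigma(u,e_1)\sigma(u,e_2)][\sigma(v,e_1)\sigma(w,e_2)] = +[\sigma(v,e)\sigma(w,e)]$, which is the \emph{negative} of the original adjacency sign in $e$. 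Hence $\mathrm{sgn}(C) = -\mathrm{sgn}(C_0)$: the subdivision flips the sign of every circle through $u$. Consequently the right-hand condition of the lemma fails, and the left-hand condition (balanced) also fails by definition, since $H$ is incompatible and $u$ lies on a circle. So the two sides agree in all cases.

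The main obstacle I anticipate is the careful matching of ``circles of $H$ corresponding to circles of $G$'' with the circle-through-$u$ dichotomy: one must be precise that a circle of $H$ \emph{not} through $u$ corresponds bijectively (with preserved sign) to a circle of $G$ not using the unsplit edge, and that a circle of $H$ \emph{through} $u$ corresponds to one that did use $e$, with its sign flipped in the incompatible case. There is also a minor subtlety that an incompatible subdivision can create circles of $H$ through $u$ even when $G$ had a circle through $e$ only in a ``degenerate'' way; but Lemma~\ref{L42} guarantees the cyclomatic number is unchanged, so no circle is created or destroyed up to this correspondence, only possibly re-signed. Assembling these observations, the biconditional follows: $H$ is balanced exactly when no re-signing happens, exactly when $u$ avoids all circles or the subdivision was compatible, exactly when corresponding circles keep their signs.
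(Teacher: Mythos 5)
Your argument is correct; in fact the paper states Lemma \ref{L67} without proof, and your case analysis (compatible subdivision via Lemma \ref{L16} and Corollary \ref{L17}; incompatible with the new vertex $u$ off all circles, where incidences on circles are untouched; incompatible with $u$ on a circle, where the two new adjacencies contribute $\sigma(u,e_1)\sigma(u,e_2)=+1$ and so flip the sign of every circle through $u$) is exactly the natural argument the definition of a balanced subdivision is designed to support. The only point worth making explicit is the correspondence itself: since $u$ has degree $2$, any circle of $H$ through $u$ must use both new incidences, so replacing $v,e_1,u,e_2,w$ by $v,e,w$ yields a genuine circle of $G$, and circles of $H$ avoiding $u$ use at most one of $e_1,e_2$ and likewise descend to circles of $G$ with unchanged incidence signs; with that stated, your three cases cover both directions of the biconditional.
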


\begin{lemma}
\label{L68}If $G$ is minimally dependent, then any balanced subdivision of $G$ 
is minimally dependent.
\end{lemma}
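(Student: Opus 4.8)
The plan is to reduce to the two already-established subdivision lemmas by splitting on the type of subdivision, exactly the dichotomy appearing in the definition of a balanced subdivision. A single subdivision step either is compatible, or is incompatible with the newly created vertex lying in no circle of the subdivision; a general balanced subdivision $H$ of $G$ is obtained by a finite sequence of such steps, so by induction it suffices to treat one step. First I would dispose of the compatible case: this is precisely Corollary \ref{L19} (equivalently Lemma \ref{L18} at the matrix level), which says that a compatible subdivision of a minimally dependent oriented hypergraph is minimally dependent. So the only genuine work is the incompatible case in which the new degree-$2$ vertex $u$ belongs to no circle.

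For that case I would argue directly at the level of the incidence matrix, mimicking the proof of Lemma \ref{L18} but tracking where incompatibility changes the bookkeeping. Suppose the edge $e$, corresponding to column $\mathbf{c}$, is split into columns $\mathbf{d}_1', \mathbf{d}_2'$ with new row $\mathbf{r}$ (the row of $u$). Because the subdivision is incompatible, the two new incidences at $u$ have product $+1$, so in row $\mathbf{r}$ the entries of $\mathbf{d}_1'$ and $\mathbf{d}_2'$ are $\pm 1$ and $\pm 1$ with the \emph{same} sign rather than opposite signs. The key point is that $u$ lies in no circle of $H$, so in the incidence graph $\Gamma_H$ the vertex $u$ together with the two incidences at it is a pendant path attached to the rest of $\Gamma_H$ only through one of $\{e_1, e_2\}$; equivalently, one of the two halves of the split edge is a twig containing $u$ as a leaf, hence (Lemma \ref{L5}) that column cannot enter any minimal dependency. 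I would then show the nullity does not change: by Lemma \ref{L42} the cyclomatic number is preserved under any subdivision, $\varphi_G = \varphi_H$, and since $u$ lies on no circle of $H$, Corollary \ref{L43} and the ear/essential-circle bookkeeping used in Theorem \ref{L64} carry over verbatim — the row-reduction that zeroed out $\varphi_G$ rows in $\mathrm{H}_G$ still zeroes out $\varphi_H = \varphi_G$ rows in $\mathrm{H}_H$, while $\mathrm{H}_H$ has exactly one more row ($\mathbf{r}$) and one more column than $\mathrm{H}_G$, so the row rank rises by one and the nullity stays equal to $1$.

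Finally I would check minimality: $G$ is minimally dependent, so its incidence matrix has nullity $1$; I have just argued the same for $H$. Minimality then follows as in Theorem \ref{L64} and Lemma \ref{L65}: the weak deletion of any nonempty set of edges of $H$ either produces a monovalent vertex (apply Lemma \ref{L5}) or, using that the only circles of $H$ are the images of circles of $G$, destroys the dependency entirely; hence no proper edge-induced subhypergraph is dependent, and a nullity-$1$ oriented hypergraph with this property is minimally dependent. The converse direction of the stated ``if and only if'' — that if $H$ is an \emph{incompatible} subdivision whose new vertex lies on a circle then minimal dependency can fail, so that the hypothesis ``$H$ balanced'' is needed — is not required here since the lemma only asks the forward implication; I would simply remark that Lemma \ref{L67} already records the circle-sign characterization of balanced subdivisions that makes this hypothesis the natural one.

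I expect the main obstacle to be the incompatible subcase: one must argue cleanly that, although incompatible column splitting in general breaks minimal dependency (the sign mismatch in row $\mathbf{r}$ gives $\pm\alpha_1 \pm \alpha_1 = \pm 2\alpha_1 \neq 0$ if one naively reuses the old coefficients), the extra hypothesis that $u$ lies on no circle forces one of the two new columns to be a twig-column that is irrelevant to dependency, so effectively $H$ inherits the dependency of $G$ through a single one of the split columns together with an unchanged row rank. Getting the incidence-graph pendant-path picture stated precisely, and matching it against the definition of ``balanced subdivision'' via Lemma \ref{L67}, is the one spot needing care.
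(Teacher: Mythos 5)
Your compatible case is exactly the paper's (Corollary \ref{L19}); the problem is the incompatible case, which is the only real content of the lemma, and there your argument rests on a structural claim that is false and in fact self-defeating. You assert that because the new vertex $u$ lies on no circle, one of the two halves of the split edge is a twig with $u$ as a leaf, so that by Lemma \ref{L5} its column ``cannot enter any minimal dependency.'' But $u$ has degree $2$ in $H$ (it is incident to both $e_1$ and $e_2$), so it is neither monovalent nor a leaf, and in general neither $e_1$ nor $e_2$ is pendant in $\Gamma_H$: the vertex $u$ is a cut vertex separating two nontrivial pieces, not the end of a pendant path. Worse, if some column of $H$ genuinely could not occur in any minimal dependency, then $H$ itself could not be minimally dependent, since a minimal dependency has full support---you would be arguing for the negation of the lemma. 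In the motivating example (incompatibly subdividing the isthmus $k$-edge of a thorn-connection of balanced $1$-pseudo-flowers, as in Lemma \ref{L65}) both halves of the split edge carry nonzero coefficients in the unique dependency of $H$. Finally, your rank bookkeeping borrows the essential-circle row reduction of Theorem \ref{L64}, which was established for balanced flowers (all vertices divalent, essential circles pure and positive); Lemma \ref{L68} assumes only that $G$ is minimally dependent, so that machinery is not available, and deciding whether the rank rises by one rather than two is precisely the question of whether $H$ is dependent, which is what has to be proved.

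What the hypothesis ``$u$ lies on no circle of $H$'' actually buys is a separation. If $u$ lies on no circle, there is no $e_1$--$e_2$ path avoiding $u$, so apart from the row of $u$ the incidence matrix of $H$ splits into two blocks, the $e_1$-side and the $e_2$-side, with no other row meeting both. Restrict the full-support solution of $\mathrm{H}_G\mathbf{x}=\mathbf{0}$ to the two sides, keep its coefficients on the $e_1$-side and negate them on the $e_2$-side: every old row still sums to zero block by block, while the row of $u$ now reads $\pm\alpha_1\mp\alpha_1=0$, so the sign clash caused by incompatibility is absorbed and $H$ is dependent with full support. (Equivalently: switch every vertex and edge separated from $e_1$ by $u$; this negates only the incidence $(u,e_2)$, turning $H$ into a compatible subdivision of $G$, and switching is mere row and column negation, which preserves minimal dependence.) Minimality then goes as in the paper: deleting $e_1$ or $e_2$ leaves $u$ monovalent (Lemma \ref{L5}), and deleting any other edge corresponds to deleting it from the minimally dependent $G$, so no proper edge-induced subhypergraph of $H$ is minimally dependent and $H$ is a minimal dependency. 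Replacing the twig/leaf step with this separation (or switching) argument is what is needed to close the gap.
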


\begin{proof}
Let $H$ be a balanced subdivision of $G$. Call the subdivided edge $e$ and
the new edges resulting from the subdivision $e_{1}$ and $e_{2}$.

If the balanced subdivision is compatible then, from Corollary \ref{L19}, we
know that a compatible subdivision of a minimal dependency is still
minimally dependent. 

If the balanced subdivision is incompatible then the newly created vertex in the 
subdivision does not belong to any circle of $H$. Since the new vertex does not 
belong to any circle of $H$ it does not increase the cyclomatic number or the nullity, 
but does increase the number of vertices and edges each by $1$, so $H$ must be dependent.

To see that $H$ is minimally dependent observe that both $H\smallsetminus
e_{1}$ and $H\smallsetminus e_{2}$ contains a monovalent vertex, and by
Lemma \ref{L5}, neither can be minimally dependent. Moreover, weak
deletion of any other edge is equivalent to weak deletion is $G$, which is
already minimally dependent. So no proper edge-induced subhypergraph of $H$ is
minimally dependent, but $H$ is dependent, so $H$ is minimally dependent.
\end{proof}

\begin{corollary}
\label{L69}A floral thorn-connection of $k$, balanced,\ $1$-pseudo-flowers
is minimally dependent.
\end{corollary}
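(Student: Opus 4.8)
The goal is Corollary~\ref{L69}: a floral thorn-connection of $k$ balanced $1$-pseudo-flowers is minimally dependent.

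The plan is to reduce the statement to the already-established Lemma~\ref{L65} together with Lemma~\ref{L68}. Recall that a floral thorn-connection of $1$-pseudo-flowers is, by definition in Section~\ref{Section4}, built from nearly-disjoint pseudo-flowers glued along the arteries of $\mathcal{A}$ at external vertices, where the gluing points are thorns (condition TC2). The flower-part of each $P_i$ is balanced by hypothesis. The heart of the argument is that this thorn-connection can be transformed, by edge-subdivisions, into exactly the kind of configuration treated in Lemma~\ref{L65}: a collection of $k$ disjoint balanced $1$-pseudo-flowers whose thorns all meet a single $k$-edge.

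Concretely, I would proceed as follows. First, each artery $A\in\mathcal{A}$ is, by Lemma~\ref{L32}, a subdivision of a single edge; since a floral thorn-connection of $1$-pseudo-flowers has exactly one place where the pseudo-flowers' thorns are collected, the arterial skeleton contracts (via Lemma~\ref{L31}) down to a single $k$-edge meeting precisely the $k$ thorns, one from each $P_i$. Run this contraction; equivalently, start from the configuration $G_0 = P_1\cup\cdots\cup P_k\cup e$ of Lemma~\ref{L65}, which is minimally dependent. Second, observe that the floral thorn-connection $G$ in question is obtained from $G_0$ by subdividing the $k$-edge $e$ back into the arteries $\mathcal{A}$ — a sequence of edge-subdivisions. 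Third, I must check each such subdivision is \emph{balanced} in the sense of the definition preceding Lemma~\ref{L67}: this is immediate because any vertex newly created by subdividing $e$ is an internal vertex of an artery, and arteries are circle-free, so the new vertex lies on no circle of the subdivision; by Lemma~\ref{L67} (or directly from the definition) each subdivision is balanced. Then Lemma~\ref{L68}, applied inductively along the sequence of subdivisions, yields that $G$ is minimally dependent. Finally I would note the ``floral'' hypothesis is exactly what guarantees no monovalent vertices are introduced, so $G$ genuinely is a thorn-connection of the stated form and Lemma~\ref{L5}-type obstructions do not arise.

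The main obstacle — more bookkeeping than real difficulty — is making precise that a floral thorn-connection of $1$-pseudo-flowers really does have the ``single $k$-edge after contraction'' shape, i.e., that conditions AC1--AC5 and TC1--TC2 force the arteries to form one $k$-edge rather than a more branched arterial structure. One must argue that with each pseudo-flower contributing a single thorn, and with AC5 forcing every artery edge to be an isthmus, the contracted arterial part cannot create additional circles or additional flower-parts, so it collapses to one edge incident to all $k$ thorns; this is essentially the observation already recorded after Corollary~\ref{L66} that vertex-contraction of a floral thorn-connection yields a hypercircle whose flower-parts are preserved, specialized to $1$-pseudo-flowers and a single isthmus. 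Once that structural reduction is in hand, the rest is a direct appeal to Lemmas~\ref{L65}, \ref{L67}, and \ref{L68}.
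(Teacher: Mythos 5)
Your argument is correct and is essentially the route the paper intends: Corollary \ref{L69} is stated without a separate proof precisely because it follows from Lemma \ref{L65} together with Lemma \ref{L68}, the vertices created by subdividing the $k$-edge back into the connecting artery lying on no circle (so the subdivisions are balanced), exactly as you say. The one case your reduction does not literally cover is the degenerate $k=2$ situation in which the two pseudo-flowers share their thorn (the artery is a vertex-artery), since that configuration is a contraction rather than a subdivision of the Lemma \ref{L65} configuration; it is handled by viewing it instead as a balanced subdivision of the single-isthmus hypercircle of Corollary \ref{L66}, a one-line adjustment.
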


Building on this we have the following lemma concerning balanced hypercircles:

\begin{theorem}
\label{L69a}A balanced hypercircle is minimally dependent.
\end{theorem}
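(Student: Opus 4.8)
The plan is to reduce the general balanced hypercircle to the single-isthmus case already handled in Corollary \ref{L66}, using induction on the number of isthmi (equivalently, on the number $k$ of flower-parts, or on the cyclomatic-type structure of the arterial skeleton). Recall that a hypercircle is the vertex-contraction of a floral thorn-connection into adjacent pseudo-flowers, so its ``global'' structure is a tree of flower-parts joined along briars that are isthmi. First I would set up the notation: let $G$ be a balanced $k$-hypercircle built from flower-parts $F_1,\dots,F_k$ and isthmi $b_1,\dots,b_{k-1}$ arranged in a tree pattern. By Lemma \ref{L63} applied to each flower-part, and since contraction of the arteries preserves degree-$2$ vertices, every vertex of $G$ has degree $2$; hence $|\mathcal{I}_G| = 2|V_G|$, exactly as in the proofs of Theorem \ref{L64} and Lemma \ref{L65}.

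The core of the argument is the rank/nullity computation, which I would run exactly parallel to Theorem \ref{L64}. Since $G$ is balanced, every essential circle is pure and positive, so each of the $\varphi_G$ essential circles lets us zero out one row of $\mathrm{H}_G$; because every vertex has degree $2$, the essential circles are confined to the flower-parts and (via the incidence-graph/ear-decomposition machinery of Corollary \ref{L54}) admit a system of distinct vertex representatives, so we genuinely zero out $\varphi_G$ independent rows and the row rank is $|V_G| - \varphi_G$. Plugging $|\mathcal{I}_G| = 2|V_G|$ into $\varphi_G = |\mathcal{I}_G| - (|V_G| + |E_G|) + 1$ gives $\varphi_G = |V_G| - |E_G| + 1$, hence the row rank equals $|E_G| - 1$ and the nullity of $\mathrm{H}_G$ is exactly $1$, so $G$ is dependent. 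For minimality, I would argue as before: weak deletion of any nonempty set of edges either disconnects $G$ (since the $b_i$ are isthmi and the arteries are by AC5 critical) or creates a monovalent vertex inside some flower-part or artery, and in either case Lemma \ref{L5} (or the disconnection argument in Lemma \ref{L65}) forbids minimal dependency; since no proper edge-induced subhypergraph is minimally dependent while $G$ itself is dependent with nullity $1$, $G$ is minimally dependent.

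An alternative, perhaps cleaner, route is induction: the base case $k=1$ is Theorem \ref{L64} (a balanced flower), and $k$ with a single isthmus is Corollary \ref{L66}. For the inductive step one peels off a leaf flower-part $F_k$ attached along the isthmus $b_{k-1}$; by Corollary \ref{L69} and Lemma \ref{L68} one can realize $G$ up to balanced subdivision and thorn-contraction from a floral thorn-connection of the pieces, and these operations preserve minimal dependency. I expect the main obstacle to be bookkeeping the distinct-vertex-representative step across multiple flower-parts simultaneously — one must be sure that the $\varphi_G$ rows zeroed out across all flower-parts are jointly independent, not just independent within each $F_i$; this is where Corollary \ref{L54} (applied flower-part by flower-part, noting that distinct flower-parts share only isthmus briars and no vertices once contracted) does the real work, and it should be stated carefully. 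The degree-$2$ fact is what makes the rows of distinct flower-parts have disjoint supports outside their shared isthmus edges, which closes the gap.
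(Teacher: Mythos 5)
Your proposal is correct and follows essentially the same route as the paper's own proof: you use the degree-$2$ property to get $|\mathcal{I}_G|=2|V_G|$, note that essential circles live entirely in the (pairwise disjoint) flower-parts so that $\varphi_G$ rows can be zeroed out via the distinct-representative machinery of Corollary \ref{L54}, conclude nullity $1$ from $|V_G|-\varphi_G=|E_G|-1$, and obtain minimality from Lemma \ref{L5} applied after weak edge-deletion. Your explicit attention to the joint independence of the zeroed rows across different flower-parts is a slightly more careful statement of the step the paper handles implicitly through flower-part disjointness, and your alternative inductive sketch is unnecessary given that the direct counting argument closes the proof.
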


\begin{proof}
Let $\mathcal{H}$ be a balanced hypercircle with maximal pseudo-flowers $%
P_{1}$, $P_{2}$, . . . , $P_{k}$, whose respective flower-parts are $F_{1}$, 
$F_{2}$, . . . , $F_{k}$. \ Since their flower-parts are pairwise disjoint
and $\mathcal{H}$ is balanced, every vertex belonging to some flower-part
must have degree equal to $2$. \ Also note that every thorn of a
pseudo-flower belongs to the flower-part of another\ adjacent pseudo-flower,
so all the vertices of $\mathcal{H}$ must have degree equal $2$, thus giving 
$\left\vert \mathcal{I}_{\mathcal{H}}\right\vert =2\left\vert V_{\mathcal{H}%
}\right\vert $.

Observe that any collection of essential circles of $\mathcal{H}$ is the
union of a collection of essential circles for each flower-part since the
flower-parts are pairwise disjoint, and there are no circles outside of the
flower-parts. \ Thus we have that%
\begin{equation*}
\varphi _{\mathcal{H}}=\sum\limits_{i=1}^{k}\varphi
_{P_{i}}=\sum\limits_{i=1}^{k}\varphi _{F_{i}}\text{.}
\end{equation*}

For each flower-part we are able to zero out $\varphi _{F_{i}}$ rows for a
total of $\varphi _{\mathcal{H}}$ zero-rows. \ However, 
\begin{equation*}
\varphi _{\mathcal{H}}=\left\vert \mathcal{I}_{\mathcal{H}}\right\vert
-(\left\vert V_{\mathcal{H}}\right\vert +\left\vert E_{\mathcal{H}%
}\right\vert )+1\text{,}
\end{equation*}%
and substituting $\left\vert \mathcal{I}_{\mathcal{H}}\right\vert
=2\left\vert V_{\mathcal{H}}\right\vert $ we have%
\begin{eqnarray*}
\varphi _{\mathcal{H}} &=&2\left\vert V_{\mathcal{H}}\right\vert
-(\left\vert V_{\mathcal{H}}\right\vert +\left\vert E_{\mathcal{H}%
}\right\vert )+1 \\
&=&\left\vert V_{\mathcal{H}}\right\vert -\left\vert E_{\mathcal{H}%
}\right\vert +1\text{.}
\end{eqnarray*}

Solving this for $\left\vert V_{\mathcal{H}}\right\vert -\varphi _{\mathcal{H%
}}$ we see that%
\begin{equation*}
\left\vert V_{\mathcal{H}}\right\vert -\varphi _{\mathcal{H}}=\left\vert E_{%
\mathcal{H}}\right\vert -1\text{.}
\end{equation*}

Thus the row rank is one less than the number of columns, and the incidence
matrix has nullity equal to $1$. The weak deletion of any non-empty set of edges leaves a monovalent vertex, which is not minimally dependent by Lemma \ref{L5}. So no edge-induced 
subhypergraph of $\mathcal{H}$ is minimally dependent, and $\mathcal{H}$ is dependent, so $\mathcal{H}$ is minimally dependent.
\end{proof}

Since flowers are $1$-hypercircles, and $0$-edges are $0$-hypercircles, we
can regard every balanced minimal dependency discussed so far as
subdivisions of balanced hypercircles. \ These, in fact, are the only
balanced minimal dependencies.

\begin{theorem}
\label{L70}$G$ is a balanced minimal dependency if, and only if, $G$ is a
balanced subdivision of a balanced hypercircle.
\end{theorem}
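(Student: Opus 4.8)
The plan is to establish the two directions separately, leaning heavily on the machinery already assembled. For the ``if'' direction the work is essentially done: Theorem~\ref{L69a} shows a balanced hypercircle is minimally dependent, and Lemma~\ref{L68} (together with Lemma~\ref{L67}) shows that passing to a balanced subdivision preserves minimal dependence. So if $G$ is a balanced subdivision of a balanced hypercircle, chaining these two results gives that $G$ is minimally dependent; it remains to note that $G$ is itself balanced, which follows from Lemma~\ref{L67} since every circle of $G$ corresponds to a circle of the balanced hypercircle and retains its (positive) sign.

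The substance is the ``only if'' direction: suppose $G$ is a balanced minimal dependency; we must exhibit it as a balanced subdivision of a balanced hypercircle. First I would apply Lemma~\ref{L5} to conclude $G$ has no monovalent vertex, so every vertex has degree $\geq 2$; and since $G$ is balanced it is cross-theta-free (Theorem~\ref{L49}/Theorem~\ref{L60}), hence every incidence has multiplicity at most $2$ and, by Corollary~\ref{L46}, every circle of $G$ is pure. The next step is to locate the ``flower-parts'': I would argue that the vertices of degree $\geq 3$ can be removed by \emph{undoing} subdivisions — that is, realize $G$ as a subdivision of a smaller oriented hypergraph $G_0$ in which every vertex lying on a circle has degree $2$ and the only vertices of higher degree are the branch points joining the circle-structure to the rest. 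Concretely, I would repeatedly $2$-vertex-contract divalent vertices (Lemma~\ref{L14} guarantees this preserves minimal dependence) until no divalent vertex remains that is ``internal'' to an artery; what survives is built from pseudo-flowers (the maximal circle-covered pieces, which are flowers by the degree-$2$ analysis on circles plus Theorem~\ref{L50}) joined along arteries and thorns. The decomposition claim to prove is that a minimally dependent balanced $G_0$ of this form is exactly a floral thorn-connection whose contraction is a balanced hypercircle — here one uses that minimal dependence forces the incidence matrix to have nullity $1$, which via the cyclomatic computations of Theorems~\ref{L64} and~\ref{L69a} pins down $|V|-|E|$ and forces the arteries to attach as isthmus-like connectors with no slack, i.e.\ conditions AC1--AC5 and TC1--TC2; any extra edge or extra component would either drop the nullity below $1$ or create a monovalent vertex, contradicting minimality via Lemma~\ref{L5}.

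I would then check that the subdivision $G \to G_0$ we performed is \emph{balanced} in the sense of the definition before Lemma~\ref{L67}: the contractions undone were either compatible (so automatically balanced) or, for the incompatible ones, the subdivided edge was an isthmus of $G$ whose new vertex lies on no circle — and this is exactly the situation flagged in the remark following Corollary~\ref{L66}. Since $G$ is balanced, every circle of $G$ is positive, and by Lemma~\ref{L67} the corresponding circles of $G_0$ are positive too, so the contracted hypercircle $\mathcal{H}$ obtained from $G_0$ is balanced. Thus $G$ is a balanced subdivision of the balanced hypercircle $\mathcal{H}$.

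The main obstacle I anticipate is the structural extraction step: showing that a balanced minimal dependency, after contracting internal divalent vertices, \emph{must} decompose as a floral thorn-connection of balanced pseudo-flowers rather than some other configuration. The positive ingredients — no monovalent vertices (Lemma~\ref{L5}), purity and degree-$2$ on circles (Corollary~\ref{L46}, Theorem~\ref{L50}, Corollary~\ref{L51}), nullity exactly $1$, and the distinct-vertex-representative property (Corollary~\ref{L54}) — should combine to force the arteries to be ``tight'' (AC5) and to force the pseudo-flowers to overlap only at shared thorns (TC2, AC3, AC4); the delicate point is ruling out a dependency supported on a configuration that is connected and nullity-$1$ yet not an arterial connection of this precise type, which I expect to handle by a careful count comparing $\varphi_G$ with $|V_G|-|E_G|+1$ exactly as in the proofs of Theorems~\ref{L64}, \ref{L69a} and Lemma~\ref{L65}, using that any deviation produces either a monovalent vertex or a second independent circle-homology class that would raise the nullity.
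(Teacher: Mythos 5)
Your ``if'' direction is fine and matches the paper: Theorem~\ref{L69a} plus Lemma~\ref{L68} (with Lemma~\ref{L67} to see the subdivision is still balanced). The problem is the converse, which is where all the content of the theorem lives, and there your proposal has a genuine gap: the claim that a balanced minimal dependency, after contracting divalent vertices, \emph{must} be a floral thorn-connection of balanced pseudo-flowers is exactly the statement that needs proof, and you explicitly defer it (``the main obstacle I anticipate''), offering only the hope that a nullity/cyclomatic count will force conditions AC1--AC5 and TC1--TC2. That counting strategy does not work as stated. For example, take two balanced $2$-pseudo-flowers joined at their thorns by two disjoint arteries: every vertex has degree $2$, the hypergraph is connected, balanceable, and (via the incidence dual, a connected balanced signed graph) its incidence matrix has nullity $1$ with a fully supported null vector --- so it is a balanced minimal dependency --- yet it violates AC5 (deleting an artery edge does not disconnect it), so it is not a thorn-connection and your ``any deviation drops the nullity or creates a monovalent vertex'' dichotomy fails. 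The theorem survives because this configuration is in fact a balanced subdivision of a single larger flower ($1$-hypercircle), which only becomes visible after contraction; in other words the task is not to \emph{exclude} such configurations by counting but to \emph{identify} them structurally as subdivisions of hypercircles, and nothing in your sketch does that. There is also a circularity in your contraction step: deciding which divalent vertices are ``internal to an artery'' (as opposed to lying in a flower-part, where every vertex is also divalent by Lemma~\ref{L63}) presupposes the very decomposition you are trying to establish.

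The paper's proof supplies precisely this missing structural argument, and it works directly on $G$ rather than on a contracted core: starting from a maximal pseudo-flower $P_{0}$ (which exists once $G$ contains a circle and is not itself a flower), it repeatedly selects a circle at minimal distance from the part already built, takes a maximal pseudo-flower $P_{i}$ containing it (maximality forces the flower-parts to be pairwise disjoint, hence a unique connecting path), takes the largest circle-avoiding artery $A_{i}$ containing that path, and finally caps any remaining monovalent vertices with $1$-edge pseudo-flowers; minimality via Lemma~\ref{L5} is what rules out loose ends at each stage. This inductive extraction is what exhibits $G$ as a (balanced) subdivision of a balanced hypercircle. To repair your proposal you would need to supply an argument of this kind --- or an honest proof that your contracted $G_{0}$ is a hypercircle --- rather than the counting heuristic.
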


\begin{proof}
Theorem \ref{L69a} and Lemma \ref{L68} tells us that a balanced subdivision
of a balanced hypercircle is minimally dependent, so all that is left to see
is the converse.

To see the converse let $G$ be a balanced minimal dependency, and observe
that $G$ is a connected oriented hypergraph that cannot contain any vertices
of degree equal to $0$ or $1$, by Lemma \ref{L5}. \ Since the degree of any
vertex in a minimal dependency is at least $2$, $G$ must be a $0$-edge, a
floral thorn-connection of $1$-edge pseudo-flowers, or it must contain a
circle. Clearly, a $0$-edge is minimally dependent, and is a $0$%
-hypercircle by definition, while a floral thorn-connections of $1$-edge
pseudo-flowers are minimal dependencies by Corollary \ref{L69}, so we must
show that if $G$ contains a circle, then it is a balanced subdivision of a
hypercircle.

Suppose $G$ is a balanced minimal dependency that contains a circle. \ Since 
$G$ contains a circle it must contain some circle belonging to a flower or
flower-part of a pseudo-flower. \ If $G$ is a flower, then it is minimally
dependent by Theorem \ref{L64}. \ Moreover, $G$ cannot properly contain a
flower without violating minimality of the dependency.

If $G$ contains a circle and is not a flower, then $G$ must contain a
maximal pseudo-flower $P_{0}$. \ Since $G$ is balanced, every vertex in the
flower-part of $P_{0}$ must have degree equal to $2$ in $P_{0}$, by Lemma %
\ref{L63}. \ Thus, the degree of each vertex of $G$ is at least $2$, and
thorns of $P_{0}$ must connect to the rest of $G$ via some edge in $%
G\smallsetminus P_{0}$. \ Since there are no monovalent vertices in $G$, the
paths leading from the thorns of $P_{0}$ into $G\smallsetminus P_{0}$ must
reach a circle or terminate at a $1$-edge. \ If there are no other circles,
then $G$ consists of a single pseudo-flower $P_{0}$ and disjoint paths
leaving each thorn that meet $1$-edge pseudo-flowers. \ Observe that the
paths leaving the thorns of $P_{0}$ must begin with an anchor of an artery
or else it violates minimality of the dependency.

If there is a circle of $G$ not in $P_{0}$, then there must exist a circle whose distance 
from $P_{0}$ is minimal. Let $P_{1}$ be a maximal pseudo-flower containing a nearest circle.
 Observe that $P_{0}$ and $P_{1}$ are flower-part-disjoint or there would exist a circle 
containing elements from each flower-parts, producing a larger pseudo-flower and 
contradicting maximality of the pseudo-flowers. Thus, there is a single path between 
$P_{0}$ and $P_{1}$; it is possible that $P_{0}$ and $P_{1}$ are adjacent. \
Let $A_{1}$ be the largest artery containing the path between $P_{0}$ and $%
P_{1}$ that avoids all circles of $G$. \ If all the circles of $G$ belong to
the flower-parts of $P_{0}$ and $P_{1}$, then $P_{0}\cup P_{1}\cup A_{1}$
must connect to $1$-edge pseudo-flowers at the unused anchors of $A_{1}$,
producing a minimal dependency.

If there is a circle of $G$ that does not belong to $P_{0}\cup P_{1}\cup
A_{1}$, there must exist another maximal pseudo-flower $P_{2}$ flower-part
disjoint from $P_{0}$ and $P_{1}$ whose distance from $P_{0}\cup P_{1}\cup
A_{1}$ is minimal. Let $A_{2}$ be the largest artery containing the path between $P_{2}$ and $P_{0}\cup P_{1}\cup A_{1}$ that internally avoids $P_{2}$ and $P_{0}\cup P_{1}\cup A_{1}$ and all circles of $G$. If all the circles of $G$ belong to the flower-parts of $P_{0}$, $%
P_{1}$, and $P_{2}$, then $P_{0}\cup (P_{1}\cup A_{1})\cup (P_{2}\cup A_{2})$
must connect to $1$-edge pseudo-flowers at the unused anchors of $A_{1}$ and 
$A_{2}$, producing a minimal dependency.

If there is a circle of $G$ does not belong to $P_{0}\cup (P_{1}\cup
A_{1})\cup (P_{2}\cup A_{2})$ we inductively add maximal pseudo-flowers and
arteries nearest to, and avoiding, the previous collection except with the possibility of
monovalent vertices. We form the union 
\begin{equation*}
P_{0}\cup (P_{1}\cup A_{1})\cup \text{ . . . }\cup (P_{k}\cup A_{k})
\end{equation*}
until all circles are exhausted. \ Since there are no
circles outside the pseudo-flowers, the remaining monovalent vertices must
be incident to a single $1$-edge to form the minimal dependency. \ This
forces $G$ to be a subdivision of a balanced hypercircle.
\end{proof}

%%%%%%%%%%%%%
%%%% Bib %%%%%%%
%%%%%%%%%%%%%

%\bibliographystyle{plain}
%\bibliography{mybibOH1}

\end{document}